\newcommand{\R}{\mathbb{R}}
\newcommand{\Z}{\mathbb{Z}}
\newcommand{\E}{\mathbb{E}}
\newcommand{\Prob}{\mathbb{P}}
\providecommand{\abs}[1]{\left\lvert#1\right\rvert}
\newtheorem{lemma}{Lemma}
\newtheorem{theorem}{Theorem}
\newtheorem{corollary}{Corollary}
\theoremstyle{definition}
\numberwithin{equation}{section}
\newtheorem{proposition}{Proposition}
\newtheorem{remark}{Remark}
\begin{document}
\title{Steady-state analysis of the Join the Shortest Queue model in the Halfin-Whitt regime}
\author[1]{Anton Braverman}
\affil[1]{Kellogg School of Management at Northwestern University}
\maketitle
\abstract{This paper studies the steady-state properties of the Join the Shortest Queue model in the Halfin-Whitt regime. 
We focus on the process tracking the number of idle servers, and the number of servers with non-empty buffers. Recently,  \cite{GamaEsch2015} proved that a scaled version of this process converges, over finite time intervals, to a two-dimensional diffusion limit as the number of servers goes to infinity. In this paper we prove that the diffusion limit is exponentially ergodic, and that the diffusion scaled sequence of the steady-state number of idle servers and non-empty buffers is tight. Combined with the process-level convergence proved in \cite{GamaEsch2015}, our results imply convergence of steady-state distributions. The methodology used is the generator expansion framework based on Stein's method, also referred to as the drift-based fluid limit Lyapunov function approach in \cite{Stol2015}. One technical contribution to the framework is to show how it can be used as a general tool to establish exponential ergodicity.}

\section{Introduction.} \label{sec:intro}
We consider a system with $n$ identical servers, where customers arrive according to a Poisson process with rate $n\lambda$, and service times are i.i.d.\ exponentially distributed with rate $1$. Each server maintains an individual buffer of infinite length. When a customer arrives, he will either enter service immediately if an idle server is available, or be routed to the server with the smallest number of customers in its buffer; ties are broken arbitrarily. Once a customer is routed to a server, he cannot switch to a different server. This model is known as the Join the Shortest Queue (JSQ) model. To describe the system, let $Q_i(t)$ be the number of servers with $i$ or more customers at time $t \geq 0$, and let $Q(t) = (Q_{i}(t) )_{i=1}^{\infty}$. Then $\{Q(t)\}_{t \geq 0}$ is a continuous time Markov chain (CTMC), and it is positive recurrent provided $n \lambda < 1$ \cite{Bram2011a}. Let  $Q_i$  be the random variables having the stationary distributions of $\{Q_i(t)\}_{t}$.

In this paper we work in the Halfin-Whitt regime \cite{HalfWhit1981}, which assumes that
\begin{align}
\lambda = 1 - \beta/\sqrt{n}, \label{eq:hw}
\end{align}
for some fixed $\beta > 0$. The first paper to study the JSQ model in this regime is \cite{GamaEsch2015}, which shows that the scaled process 
\begin{align}
\Big\{ \Big( \frac{Q_1(t) - n}{\sqrt{n}}, \frac{Q_2(t)}{\sqrt{n}}, \frac{Q_3(t)}{\sqrt{n}}, \ldots\Big) \Big\}_{t \geq 0} \label{eq:diffscale}
\end{align}
converges to a diffusion limit as $n \to \infty$. The diffusion limit of \eqref{eq:diffscale} is essentially two dimensional, because $Q_i(t)/\sqrt{n}$ becomes negligible for $i \geq 3$. The results of \cite{GamaEsch2015} are restricted to the transient behavior of the JSQ model and steady-state convergence is not considered, i.e.\ convergence to the diffusion limit is proved only for finite time intervals. 

In the present paper, we study the steady-state properties of the JSQ system. Specifically, we prove the existence of an explicitly known constant $C(\beta) > 0$ depending only on $\beta$ such that
\begin{align}
&n - \E Q_1 = n (1 - \lambda), \notag \\
&\E Q_2 \leq C(\beta) \sqrt{n} , \notag \\
&\E Q_i \leq C(\beta), \quad i \geq 3, \quad n \geq 1.\label{eq:introresults}
\end{align}
In other words, the expected number of idle servers is known, the expected number of non-empty buffers is at most of order $\sqrt{n}$, and the expected number of buffers with two or more waiting customers is bounded by a constant independent of $n$. A consequence of \eqref{eq:introresults} is tightness of the sequence of diffusion-scaled stationary distributions. 

In addition to \eqref{eq:introresults}, we also prove that the two-dimensional diffusion limit of the JSQ model is exponentially ergodic. Stability of this diffusion limit remained an open question until the present paper. Combining the process-level convergence of \cite{GamaEsch2015}, tightness of the prelimit stationary distributions in \eqref{eq:introresults}, and stability of the diffusion limit, we are able to justify convergence of the stationary distributions via a standard limit-interchange argument.

To prove our results, we use the generator expansion framework, which is a manifestation of Stein's method \cite{Stei1986} in queueing theory and was recently introduced to the stochastic systems literature in \cite{Gurv2014, BravDai2017};  see \cite{BravDaiFeng2016} for an accessible introduction. The idea is to perform Taylor expansion on the generator of a CTMC, and by looking at the second-order terms,  to identify a diffusion model approximating the CTMC. One then proves bounds on the steady-state approximation error of the diffusion, which commonly results in convergence rates to the diffusion approximation \cite{Gurv2014, BravDai2017, BravDaiFeng2016, GurvHuan2016, FengShi2017}. In this paper, we use only the first-order terms of the generator expansion, which correspond to the generator of a related fluid model. We then carry out the machinery of Stein's method to prove convergence rates to the fluid model equilibrium. The bounds in \eqref{eq:introresults} are then simply an alternative interpretation of these convergence rates. For other examples of Stein's method for fluid, or mean-field models, see \cite{Ying2016,Ying2017, Gast2017,GastHoud2017}. Specifically, \cite{Ying2016} was the first to make the connection between Stein's method and convergence rates to the mean-field equilibrium.

Our approach can also be tied to the drift-based fluid limit (DFL) Lyapunov functions used in \cite{Stol2015}, which appeared a few years before \cite{Ying2016}. As we will explain in more detail in Section~\ref{sec:derbounds}, the DFL approach and Stein's method for mean-field approximations are essentially one and the same.

This paper contributes another example of the successful application of the generator expansion method to the queuing literature. Although the general framework has already been laid out in previous work, examples of applying the framework to non-trivial systems are the only way to display the power of the framework and promote its adoption in the research community. Furthermore, tractable examples help showcase and expand the versatility of the framework and the type of results it can prove. The present paper contributes from this angle in two ways. First, the JSQ model is an example where the dimension of the CTMC is greater than that of the diffusion approximation. To justify the approximation, one needs a way to show that the additional dimensions of the CTMC are asymptotically negligible; this is known as state space collapse (SSC). 
 Our way of dealing with SSC in Section~\ref{sec:tightness} differs from the typical solution of bounding the magnitude of the SSC terms \cite{BravDai2017, EryiSrik2012, MaguSrik2016,BurlMaguSrik2016} (only \cite{BravDai2017} of the aforementioned papers uses the generator expansion framework, but the rest still deal with steady-state SSC in a conceptually similar way). Second, this paper presents the first working example of the generator expansion framework being used to prove exponential ergodicity of the diffusion approximation. The insight used is simple, but can be easily generalized to prove exponential ergodicity for other models.


\subsection{Literature review and contributions.} \label{sec:lit}
Early work on the JSQ model appeared in the late 50's and early 60's \cite{Haig1958,King1961}, followed by a number of papers in the 70's--90's \cite{FlatMcKe1977a, FoscSalz1978, Half1985, WangZhan1989, HsuWangZhan1995}. This body of literature first studied the JSQ model with two servers, and later considered heavy-traffic asymptotics in the setting where the number of servers $n$ is fixed, and $\lambda \to 1$; see \cite{GamaEsch2015} for an itemized description of the aforementioned works. A more recent paper \cite{EryiSrik2012} considers the steady-state behavior of the JSQ model, but again in the setting where $n$ is fixed, and $\lambda \to 1$. 

The asymptotic regime where $n \to \infty$ has been untouched until very recently.  In \cite{Stol2015a}, the author studies a variant of the JSQ model where the routing policy is to join an idle server if one is available, and otherwise join any buffer uniformly; this is known as the Join the Idle Queue (JIQ) policy. In that paper, the arrival rate is $n \lambda$ where  $\lambda < 1$ is fixed, and $n \to \infty$. The author shows that in this underloaded asymptotic regime, JIQ is asymptotically optimal on the fluid scale, and therefore asymptotically equivalent to the JSQ policy. We have already described \cite{GamaEsch2015}, which is the first paper to study a non-underloaded regime. In \cite{MukhBorsLeeuWhit2016}, the authors work in the Halfin-Whitt regime and show that JIQ is asymptotically optimal, and therefore asymptotically equivalent to JSQ, on the diffusion scale. Most recently,  \cite{GuptWalt2017} studies the JSQ model in the non-degenerate slowdown (NDS) regime introduced in \cite{Atar2012}. In this regime, $\lambda = 1 - \beta/n$ for some fixed $\beta > 0$, i.e.\ NDS is even more heavily loaded than the Halfin-Whitt regime. The authors of \cite{GuptWalt2017} establish a diffusion limit for the total customer count process. For a recent overview of load balancing algorithms see \cite{BoorBorsLeeuMukh2017}; that paper includes the JSQ algorithm and the closely related power-of-$d$ class of policies.

In the asymptotic regime where $n \to \infty$, all previous considerations of the diffusion-scaled model   \cite{GamaEsch2015,MukhBorsLeeuWhit2016,GuptWalt2017} have been in the transient setting. In particular, convergence to the diffusion limit is only proved over finite time intervals. In contrast, the present paper deals with steady-state distributions. Since the seminal work of \cite{GamaZeev2006}, justifying convergence of steady-state distributions has become the standard in heavy-traffic approximations, and is recognized as being a non-trivial step beyond convergence over finite-time intervals \cite{BudhLee2009, ZhanZwar2008, Kats2010, YaoYe2012, Tezc2008, GamaStol2012, Gurv2014a, Stol2015, GurvHuan2016, BravDai2017,BravDaiFeng2016,Gurv2014}. 

The methodology used in this paper can be discussed in terms of \cite{Ying2016,Ying2017, Gast2017, Stol2015}. The main technical driver of our results are bounds on the derivatives of the solution to a certain first order partial differential equation (PDE) related to the fluid model of the JSQ system. In the language of \cite{Stol2015}, we need to bound the derivatives of the DFL Lyapunov function. These derivative bounds are a standard requirement to apply Stein's method, and \cite{Ying2016,Ying2017, Gast2017} provide sufficient conditions to bound these derivatives for a large class of PDEs. The bounds in \cite{Ying2016,Ying2017, Gast2017} require continuity of the vector field defining the fluid model, but the JSQ fluid model does not satisfy this continuity due to a reflecting condition at the boundary. To circumvent this, we leverage knowledge of how the fluid model behaves to give us an explicit expression for the PDE solution, and we bound its derivatives directly using this expression. Using the behavior of the fluid model is similar to what was done in \cite{Stol2015}. However, bounding the derivatives in this way requires detailed understanding of the fluid model, and as such this is a case-specific approach that varies significantly from one model to another. Furthermore, unlike \cite{Stol2015} where the dimension of the CTMC equals the dimension of the diffusion approximation, our CTMC is infinite-dimensional whereas the diffusion process is two-dimensional. These additional dimensions in the CTMC create additional technical difficulties which we handle in Section~\ref{sec:tightness}.

Regarding our proof of exponential ergodicity. The idea of using a fluid model Lyapunov function to establish exponential ergodicity of the diffusion model was initially suggested in Lemma 3.1 of \cite{Gurv2014}. However, the discussion in \cite{Gurv2014} is at a conceptual level, and it is only after the working example of the present paper that we have a simple and general implementation of the idea. Indeed, our Lyapunov function in  Section~\ref{sec:proofergod} violates the condition in Lemma 3.1 of \cite{Gurv2014}.


\subsection{Notation.}
We use $\Rightarrow$ to denote weak convergence, or convergence in distribution. 
We use $1(A)$ to denote the indicator of a set $A$. We use $D = D([0,\infty),\R)$ to denote the space of right continuous functions with left limits mapping $[0,\infty)$ to $\R$. For any integer $k \geq 2$, we let $D^{k} = D([0,\infty),\R^{k})$ be the product space $D \times \ldots \times D$. Let 
\begin{align}
\Omega = (-\infty,0] \times [0,\infty). \label{eq:omega}
\end{align}
Going forward, we adopt the convention that for any function $f:\Omega \to \R$, partial derivatives are understood to be one-sided derivatives for those values $x \in \partial \Omega$ where the derivative is not defined. For example, the partial derivative with respect to  $x_1$  is not defined on the set $\{x_1=0,\ x_2\geq 0\}$. In particular, for any integer $k > 0$ we let  $C^{k}(\Omega)$ be the set of $k$-times continuously differentiable functions $f: \Omega	\to \R$ obeying the notion of one-sided differentiability  just described. We use $f_i(x)$ to denote $\frac{d f(x)}{d x_i}$. 

The rest of the paper is structured as follows. We state our main results in Section~\ref{sec:main}, and provide a roadmap to prove them in Section~\ref{sec:tightness}. Section~\ref{sec:derbounds} is devoted to understanding the JSQ fluid model, and using this to prove the derivative bounds that drive the proof of our main results.

\section{Model and main results.} \label{sec:main}

Consider the CTMC $\{Q(t)\}_{t \geq 0}$ introduced in Section~\ref{sec:intro}. The state space of the CTMC is 
\begin{align*}
S = \big\{ q \in \{ 0,1,2, \ldots , n \}^{\infty} \ |\ q_{i} \geq q_{i+1} \text{ for } i \geq 1 \text{ and } \sum_{i=0}^{\infty}q_i < \infty \big\}.
\end{align*}
The requirement that $\sum_{i=0}^{\infty}q_i < \infty$ if $q \in S$ means that we only consider states with a finite number of customers. Recall that  $Q_i$ are random variables having the stationary distributions of $\{Q_i(t)\}_{t}$, and let $Q = (Q_{i})$ be the corresponding vector. 
Let us also define the fluid-scaled CTMC $\{X(t)\}_{t\geq 0}$ by
\begin{align*}
X_1(t) = \frac{Q_1(t) - n}{n}, \quad X_i(t) = \frac{Q_i(t)}{n}, \quad i \geq 2.
\end{align*}
Also, let $X_i$ be the random variables having the stationary distributions of  $\{X_i(t)\}_{t\geq 0}$, and set $X = (X_i)_{i=1}^{\infty}$. In addition to the fluid scaling, we refer to $\{\sqrt{n} X(t)\}_{t\geq 0}$ and $\{\sqrt{n} X\}_{n=1}^{\infty}$ as the diffusion-scaled CTMC and sequence of stationary distributions, respectively.

As mentioned, convergence of the diffusion-scaled process was already proved. The following result is copied from \cite{MukhBorsLeeuWhit2016} (but it was first proved in \cite{GamaEsch2015}).
\begin{theorem}[Theorem 1 of \cite{MukhBorsLeeuWhit2016}]\label{thm:transient}
Suppose  $Y(0) = (Y_1(0),  Y_2(0)) \in \R^2$ is a random vector such that  $\sqrt{n}X_i(0) \Rightarrow Y_i(0)$ for $i = 1,2$ as $n \to \infty$ and $\sqrt{n}X_i(0) \Rightarrow 0$ for $i \geq 3$ as $n \to \infty$. Then the process $\{\sqrt{n}(X_1(t),  X_2(t))\}_{t\geq 0}$ converges uniformly over bounded intervals to $\{(Y_1(t),Y_2(t))\}_{t \geq 0} \in D^2$, which is the unique solution of the stochastic integral equation 
\begin{align}
&Y_1(t) = Y_1(0) + \sqrt{2} W(t) - \beta t + \int_{0}^{t} (-Y_1(s) + Y_2(s)) ds - U(t), \notag \\
&Y_2(t) = Y_2(0) + U(t)  - \int_{0}^{t} Y_2(s) ds, \label{eq:diffusion}
\end{align}
where $\{W(t)\}_{t\geq 0}$ is standard Brownian motion and $\{U(t)\}_{t\geq 0}$ is the unique non-decreasing, non-negative process in $D$ satisfying $\int_{0}^{\infty} 1(Y_1(t)<0) d U(t) = 0$.
\end{theorem}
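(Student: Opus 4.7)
The plan is to establish a functional central limit theorem for the density-dependent CTMC $\{Q(t)\}_{t \geq 0}$ via a semi-martingale decomposition, along the lines originally developed in \cite{GamaEsch2015}. First I would write down a pathwise Poisson representation: letting $\{N^+_i\}, \{N^-_i\}$ be independent unit-rate Poisson processes,
\begin{align*}
Q_i(t) = Q_i(0) + N^+_i\Big(\int_0^t \mu^+_i(Q(s))\,ds\Big) - N^-_i\Big(\int_0^t \mu^-_i(Q(s))\,ds\Big),
\end{align*}
where $\mu_1^+(q) = n\lambda \cdot 1(q_1<n)$, $\mu_i^+(q) = n\lambda \cdot 1(q_1 = \ldots = q_{i-1}=n, q_i < n)$ for $i \geq 2$, and $\mu_i^-(q) = q_i - q_{i+1}$. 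Applying Doob's decomposition to each time-changed Poisson process splits it into a square-integrable martingale $M^n_i$ and a compensator equal to the drift integral; the diffusive scaling $\hat Y^n_i(t) := \sqrt{n} X_i(t)$ renormalises the noise to $O(1)$.

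Next I would regroup the drift terms using $n\lambda = n - \beta\sqrt{n}$ and $1(Q_1 < n) = 1 - 1(Q_1 = n)$. For $\hat Y^n_1$ this produces a deterministic drift $-\beta t$, a linear drift $\int_0^t(-\hat Y^n_1(s) + \hat Y^n_2(s))\,ds$ from matching the $O(n)$ pieces of the arrival and departure rates, and a non-decreasing boundary process
\begin{align*}
U^n(t) := \lambda \sqrt{n}\int_0^t 1(Q_1(s) = n)\,ds,
\end{align*}
acting only on the boundary $\hat Y^n_1 = 0$. The same $U^n$ appears with a plus sign in the drift of $\hat Y^n_2$, balanced against $-\int_0^t \hat Y^n_2(s)\,ds + \int_0^t \hat Y^n_3(s)\,ds$ from departures.

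The third step is to invoke the martingale functional CLT. The predictable quadratic variations computed from the compensators give $\langle M^n_1\rangle_t \to 2t$ (the arrival rate $n\lambda/n\to 1$ plus the departure rate $(Q_1-Q_2)/n\to 1$), so $M^n_1 \Rightarrow \sqrt{2}\,W$, while $\langle M^n_2\rangle_t \to 0$ because both the up- and down-rates of $Q_2$ are $o(n)$. Tightness of the drift and boundary pieces follows from the a priori ceiling $\hat Y^n_1 \leq 0$, which forces $U^n$ to have uniformly bounded total variation on compact intervals. Any subsequential limit in $D^2$ therefore satisfies (\ref{eq:diffusion}), and strong uniqueness of the one-sided reflected SDE with Lipschitz drift upgrades this to full convergence.

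The hard part is state-space collapse: to obtain a genuinely two-dimensional limit, one needs $\sup_{s \leq t} \hat Y^n_i(s) \Rightarrow 0$ for every $i \geq 3$, so that the $\hat Y^n_3$ term in the $\hat Y^n_2$ drift vanishes. The key observation is that $Q_3$ can only up-jump while $Q_2 = n$, i.e.\ while $\hat Y^n_2 = \sqrt{n}$; once tightness of $\hat Y^n_2$ at scale $O(1)$ is in hand, the Lebesgue measure of $\{s \in [0,t] : Q_2(s) = n\}$ is $o_{\Prob}(1)$. Combining this with the initial-condition assumption $\sqrt{n}X_i(0) \Rightarrow 0$ for $i \geq 3$ and a Gronwall-type argument propagated up the hierarchy gives the required SSC. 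The delicate piece is the bootstrap between tightness of $(\hat Y^n_1, \hat Y^n_2)$ and SSC of the higher coordinates, since each step feeds the other; this is what forces the hypothesis $\sqrt{n}X_i(0) \Rightarrow 0$ for $i \geq 3$ into the statement.
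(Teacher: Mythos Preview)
The paper does not actually prove Theorem~\ref{thm:transient}: it is quoted verbatim from \cite{MukhBorsLeeuWhit2016} (and attributed originally to \cite{GamaEsch2015}) and used as a black box in the limit-interchange argument of Theorem~\ref{thm:interchange}. There is therefore no ``paper's own proof'' to compare against.

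That said, your sketch is a faithful high-level outline of the argument in \cite{GamaEsch2015}: Poisson representation, semi-martingale decomposition under diffusion scaling, identification of the reflection term $U^n$, martingale FCLT for the noise, and state-space collapse for coordinates $i \geq 3$ driven by the observation that $Q_3$ can only increase while $Q_2 = n$. The bootstrap you flag between tightness of $(\hat Y^n_1,\hat Y^n_2)$ and SSC of the higher coordinates is indeed the technical crux in \cite{GamaEsch2015}, and the initial-condition hypothesis $\sqrt{n}X_i(0)\Rightarrow 0$ for $i\geq 3$ is used exactly where you say. If you were to flesh this out into a full proof you would need to be more careful about the tightness of $U^n$ (the bound you claim from $\hat Y^n_1\leq 0$ alone is not quite enough; one also uses the equation for $\hat Y^n_2$), but as an outline this is correct.
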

Although process-level convergence to the diffusion limit was proved, the question of convergence of stationary distributions remained open until the present paper. Establishing steady-state convergence requires two ingredients: 1) proving tightness of the diffusion-scaled sequence $\{\sqrt{n} X\}_{n=1}^{\infty}$ and 2) showing that the diffusion model in \eqref{eq:diffusion} is positive recurrent. These ingredients are established via the following two theorems. 
\begin{theorem}  \label{thm:main}
For each $\beta > 0$, there exists a constant $C(\beta)$ such that for all $n \geq 1$, 
\begin{align}
\abs{\sqrt{n}X_i} \leq&\ C(\beta), \quad i =1,2, \label{eq:thm1}\\
 \abs{nX_i} \leq&\ C(\beta), \quad i \geq 3. \label{eq:thm2}
\end{align}
\end{theorem}

\begin{theorem} \label{thm:ergodic}
The diffusion process $\{(Y_1(t),Y_2(t))\}_{t \geq 0}$ defined in \eqref{eq:diffusion} is positive recurrent.
\end{theorem}
In addition to positive recurrence of the diffusion, we will actually prove that it is exponentially ergodic. Combining Theorems~\ref{thm:transient}--\ref{thm:ergodic}, we arrive at the following proposition. The proof is simple and relegated to Section~\ref{app:interchange}.
\begin{proposition} \label{thm:interchange}
Let $Y = (Y_1,Y_2)$ have the stationary distribution of the diffusion process defined in \eqref{eq:diffusion}. Then 
\begin{align}
\sqrt{n}(X_1,X_2) \Rightarrow Y \text{ as $n \to \infty$}.
\end{align}
\end{proposition}

In the remainder of this paper, we prove Theorems~\ref{thm:main} and \ref{thm:ergodic}. The former is proved in Section~\ref{sec:tightness} while the latter is proved in Section~\ref{sec:difflimit}. As we will see, both theorems will  proved using very similar methodology. Introduction of this methodology is the topic of the next section.
 

%

\section{Proving tightness} \label{sec:tightness}
Let $G_Q$ be the generator of the CTMC $\{Q(t)\}_{t \geq 0}$, which acts on function $f : S \to \R$ in the following way:
\begin{align*}
G_Q f(q) =&\ n \lambda 1(q_1 < n) \big( f(q + e^{(1)}) - f(q) \big) \\
&+ \sum_{i=2}^{\infty} n \lambda 1(q_1 = \ldots = q_{i-1} = n, q_i < n) \big( f(q+e^{(i)}) - f(q)\big)\\
&+ \sum_{i=1}^{\infty} (q_i - q_{i+1}) \big(f(q-e^{(i)}) - f(q)\big),
\end{align*}
where $e^{(i)}$ is the infinite dimensional vector where the $i$th element equals one, and the rest equal zero. The generator of the CTMC encodes the stationary behavior of the chain. The relationship between the generator and the stationary distribution can be exploited via the following lemma, which is proved in Section~\ref{app:gzlemma}.
\begin{lemma} \label{lem:gz}
For any function $f: S \to \R$ such that $\E \abs{f(Q)} < \infty$, 
\begin{align}
\E G_Q f(Q) = 0. \label{eq:bar}
\end{align}
\end{lemma}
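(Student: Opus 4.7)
The plan is to exploit the fact that the total jump rate out of any state is uniformly bounded, which lets us uniformize the chain and reduce the claim to the standard stationarity identity for a discrete-time Markov kernel.

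First I would observe that for any $q \in S$, the total rate of leaving $q$ is at most
\[
n\lambda \;+\; \sum_{i=1}^{\infty}(q_i - q_{i+1}) \;=\; n\lambda + q_1 \;\leq\; 2n,
\]
since $q_1 \leq n$ and the sum telescopes (using $q_i \to 0$, which holds because $\sum_i q_i < \infty$). Set $\Lambda = 2n$. This bound is the key structural input: it means the CTMC is uniformizable.

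Next I would uniformize. Write $G_Q f(q) = \sum_{q' \neq q} R(q,q')\bigl(f(q') - f(q)\bigr)$ and define the transition kernel $P(q,q') = R(q,q')/\Lambda$ for $q'\neq q$ and $P(q,q) = 1 - \sum_{q'\neq q} R(q,q')/\Lambda \in [0,1]$. Then
\[
G_Q f(q) \;=\; \Lambda\bigl(Pf(q) - f(q)\bigr),
\]
and, since $\pi$ is stationary for the CTMC, standard uniformization gives $\pi P = \pi$. Taking $Q \sim \pi$ and using $|G_Q f(q)| \leq \Lambda(P|f|(q) + |f(q)|)$ together with the upcoming Fubini computation shows $\E|G_Q f(Q)| \leq 2\Lambda\,\E|f(Q)| < \infty$, so the expectation in \eqref{eq:bar} is well-defined.

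The remaining step is just to verify that $\E Pf(Q) = \E f(Q)$. By Fubini/Tonelli, justified by
\[
\sum_{q}\pi(q)\sum_{q'}P(q,q')|f(q')| \;=\; \sum_{q'}|f(q')|\sum_q \pi(q)P(q,q') \;=\; \sum_{q'}\pi(q')|f(q')| \;=\; \E|f(Q)| \;<\;\infty,
\]
we can interchange the sums and use $\pi P = \pi$ to conclude $\E Pf(Q) = \E f(Q)$, whence $\E G_Q f(Q) = \Lambda(\E Pf(Q) - \E f(Q)) = 0$.

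The only point requiring any care is the integrability/Fubini step that justifies the interchange of summations; the rest is bookkeeping. Everything downstream rests on the uniform bound $\Lambda \leq 2n$, which is why verifying that the total exit rate is bounded is the conceptual heart of the argument.
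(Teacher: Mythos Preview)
Your proof is correct and rests on the same key observation as the paper's: the total exit rate from any state is uniformly bounded by $n\lambda + n$. The paper simply cites a general proposition (Henderson 1997, Proposition~1.1, or Glynn--Zeevi 2008, Proposition~3) whose hypothesis $\E\bigl[|G_Q(Q,Q)|\,|f(Q)|\bigr]<\infty$ is immediate from this bound, whereas you unpack that proposition via uniformization and Fubini; the two arguments are essentially the same, with yours being self-contained.
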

By choosing different test functions $f(q)$, we can use \eqref{eq:bar} to obtain stationary performance measures of our CTMC. As an example of the idea, we are able to prove the following using simple test functions.
\begin{lemma} \label{lem:q1}
For any $n \geq 1$ and $\lambda \in (0,1)$, 
\begin{align}
&\E Q_1 = n\lambda, \label{eq:q1} \\
&\E Q_i = n \lambda \Prob( Q_1 = \ldots = Q_{i-1} = n), \quad i > 1. \label{eq:qi}
\end{align}
\end{lemma}
The lemma is proved in Section~\ref{app:q1}. The main idea is to apply $G_Q$ to $f(q) =  \sum_{j=i}^{\infty} q_j$ to get the equation involving $\E Q_i$. This type of analysis is also commonly called Lyapunov drift analysis, and so in this paper we will use the terms `test function' and `Lyapunov function' interchangeably. Observe that \eqref{eq:q1} implies that $n - \E Q_1 = n(1-\lambda)$, which is what was claimed in \eqref{eq:introresults} of Section~\ref{sec:intro}. Furthermore, tightness of $\{\sqrt{n} X_1\}$ follows because 
 \begin{align*}
\sqrt{n} \E \abs{X_1} = \frac{n-\E Q_1}{\sqrt{n}} = \frac{n(1-\lambda)}{\sqrt{n}} = \beta.
\end{align*}
Despite having \eqref{eq:qi} at our disposal, we do not prove the rest of Theorem~\ref{thm:main} by analyzing $\Prob( Q_1 = \ldots = Q_{i-1} = n)$ directly. This is because we do not have a good handle to control these probabilities. One may continue to experiment by applying $G_Q$ to various test functions in the hope of getting more insightful results from \eqref{eq:bar}, e.g.\ an expression for $\E Q_i$ that does not involve the complicated term $\Prob( Q_1 = \ldots = Q_{i-1} = n)$. In general, the more complicated the Markov chain, the less likely that ad-hoc experimentation with test functions will be a productive strategy. 

Let us begin to derive a more systematic approach to picking the test function. For a function $f:\R^2 \to \R$, define the lifted version $ A f: S \to \R$ by
\begin{align*}
(A f)(q) = f(x_1,x_2) = f(x), \quad q \in S,
\end{align*}
where $x_1 = (q_1-n)/n$, and $x_i = q_i/n$ for $i \geq 2$. Keeping in mind the relationship between $x$ and $q$, we will abuse notation and sometimes write $(Af)(x)$. The generator  $G_X$  of the CTMC $\{ X(t) \}_{t\geq 0}$ acts on $Af$ as follows:
\begin{align*}
G_X Af(x) =&\ n\lambda 1(q_1<n) \big(f(x+e^{(1)}/n)- f(x)\\
&+  n \lambda 1(q_1=n, q_2  < n) \big( f(x+e^{(2)}/n) - f(x)\big)\\\\
&+(q_1-q_2) \big(f(x-e^{(1)}/n) - f(x)\big) + (q_2 - q_{3}) \big(f(x-e^{(2)}/n) - f(x)\big).
\end{align*}
The essence of the generator comparison framework is that we can perform Taylor expansion on $G_X$ above to extract a `generator' for the associated fluid model. To be precise, for any differentiable function $f: \R^2 \to \R$, let us define 
\begin{align}
L f(x)  = (- x_1 + x_2 - \beta/\sqrt{n}) f_1(x) - x_2 f_2(x), \quad x \in \R^2, \label{eq:L}
\end{align}
where $f_i(x) = \frac{d f(x)}{d x_i}$. 
 The following lemma expands $G_X$ into $L$ plus additional terms, and is proved in Section~\ref{app:taylor}. Recall the set $\Omega$ introduced in \eqref{eq:omega}.
\begin{lemma} \label{lem:gentaylor}
For any $q_i \in \Z_+$, let $x_1 = (q_1-n)/n$ and  $x_i = q_i/n$ for $i \geq 2$. Suppose $f(y_1,y_2)$ is defined on $\Omega$, and   $f_1(\cdot,y_2), f_{2}(y_1,\cdot)$ are absolutely continuous for all $y \in \Omega$. Then for all $q \in S$,
\begin{align*}
G_X A f(q) =&\ Lf(x) + (f_2(x)-f_1(x) ) \lambda 1(x_1 = 0) +\varepsilon(x),
\end{align*} 
where
\begin{align*}
\varepsilon(x) =&\   - f_2(x) \lambda 1(q_1 = q_2 = n)  + q_3 \int_{x_2-1/n}^{x_2} f_2(x_1,u) du  \notag \\
&+ n\lambda 1(q_1 < n) \int_{x_1}^{x_1+1/n} (x_1 + 1/n - u) f_{11}(u,x_2) du  \notag \\
&+ n\lambda 1(q_1 = n, q_2 < n) \int_{x_2}^{x_2+1/n} (x_2 + 1/n - u) f_{22}(x_1,u) du \notag \\
&+ (q_1 - q_2) \int_{x_1-1/n}^{x_1} (u - (x_1 - 1/n)) f_{11}(u,x_2) du \notag \\
&+ q_2 \int_{x_2-1/n}^{x_2} (u - (x_2 - 1/n)) f_{22}(x_1,u) du.
\end{align*}
\end{lemma}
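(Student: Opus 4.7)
The plan is a direct Taylor expansion of $G_X Af(q)$ followed by careful bookkeeping against $Lf(x)$. First I would observe that $Af(q) = f(x_1, x_2)$ depends only on $q_1$ and $q_2$, so all arrival terms with $i \geq 3$ and all departure terms with $i \geq 3$ in the definition of $G_Q$ contribute zero to $G_X Af(q)$. Only four terms survive: the $x_1$-upward jumps at rate $n\lambda 1(q_1<n)$, the $x_2$-upward jumps at rate $n\lambda 1(q_1 = n, q_2 < n)$, the $x_1$-downward jumps at rate $q_1 - q_2$, and the $x_2$-downward jumps at rate $q_2 - q_3$.

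Next, for each surviving term I would apply the integral form of Taylor's theorem, which is available because $f_1$ and $f_2$ are absolutely continuous. For example,
\begin{align*}
f(x_1 + 1/n, x_2) - f(x_1, x_2) = \frac{1}{n} f_1(x) + \int_{x_1}^{x_1 + 1/n} (x_1 + 1/n - u) f_{11}(u, x_2)\, du,
\end{align*}
with analogous expansions for the downward $x_1$-jump (remainder against weight $u - (x_1 - 1/n)$) and the two $x_2$-jumps. The single nonroutine step is in the level-$2$ departure $(q_2 - q_3)(Af(q - e^{(2)}) - Af(q))$: I would split its rate as $q_2$ minus $q_3$, Taylor-expand only the $q_2$-piece (producing a first-order contribution $-x_2 f_2(x)$ and a second-order remainder with coefficient $q_2$), and leave the $q_3$-piece in the integral form $q_3 \int_{x_2 - 1/n}^{x_2} f_2(x_1, u)\, du$. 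This split is precisely what produces the stray $q_3 \int f_2$ term in the statement.

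Finally, I would collect first-order contributions. Using $q_1/n = x_1 + 1$, $q_2/n = x_2$, and $1 - \lambda = \beta/\sqrt{n}$, the coefficient of $f_1(x)$ reduces to $\lambda 1(q_1 < n) - (x_1 + 1 - x_2) = (-x_1 + x_2 - \beta/\sqrt{n}) - \lambda 1(q_1 = n)$, while the coefficient of $f_2(x)$ becomes $\lambda 1(q_1 = n, q_2 < n) - x_2$. Subtracting $Lf(x)$ removes the drift terms and leaves the boundary excess $-\lambda 1(q_1 = n) f_1(x) + \lambda 1(q_1 = n, q_2 < n) f_2(x)$. Rewriting $1(q_1 = n, q_2 < n) = 1(q_1 = n) - 1(q_1 = q_2 = n)$ regroups this into $\lambda 1(q_1 = n)(f_2(x) - f_1(x)) - \lambda 1(q_1 = q_2 = n) f_2(x)$, matching the first two lines of the lemma. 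Together with the four second-order remainder integrals and the retained $q_3\int f_2$ term, this reproduces the claimed identity.

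The only real difficulty is bookkeeping: tracking the four indicator patterns on the boundary, recognizing the split of $q_2 - q_3$ that isolates the $q_3$-piece (without which the lemma's asymmetric first-order $q_3\int f_2$ term would not appear), and confirming the exact weights and limits of the four remainder integrals. Absolute continuity of the first partial derivatives is precisely what legitimizes the integral remainder formula, so no analytic issue arises.
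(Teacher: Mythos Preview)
Your proposal is correct and follows essentially the same approach as the paper: write out the four surviving jump terms of $G_X Af$, apply the integral Taylor remainder to each, split the $(q_2 - q_3)$ departure rate so that only the $q_2$-piece is expanded while the $q_3$-piece is left as $q_3\int_{x_2-1/n}^{x_2} f_2(x_1,u)\,du$, and then do the indicator bookkeeping to isolate $Lf(x)$. The paper's proof in Appendix~\ref{app:taylor} is line-for-line the same computation.
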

To make use of Lemma~\ref{lem:gentaylor}, observe that $\E \abs{f(X_1,X_2)} < \infty $ for any $f:\R^2 \to \R$ because $(X_1,X_2)$ can only take finitely many values.  A variant of Lemma~\ref{lem:gz} then tells us that 
\begin{align}
\E G_X Af(X) =0. \label{eq:liftedgx}
\end{align} 
Combining \eqref{eq:liftedgx} with the Taylor expansion in Lemma~\ref{lem:gentaylor} then yields
\begin{align}
\E G_X Af(X) = \E Lf(X) + \E \big((f_2(X)-f_1(X) ) \lambda 1(X_1 = 0)\big) +  \E \varepsilon(X)=0. \label{eq:gendetailed}
\end{align}
In other words, the expression above says that $G_X Af(X)$ can be decomposed into three parts. The first term $Lf(X)$ represents the first-order drift of the CTMC, which is commonly referred to as the fluid model of the process. The higher order terms (corresponding to the diffusion approximation) are grouped into $\varepsilon(X)$, which will act as an error term for our purposes. The final term, $(f_2(X)-f_1(X) ) \lambda 1(X_1 = 0)$, is a reflection term that is present because $X_1$ has to be non-positive. Unlike the terms in $\varepsilon(X)$, this term cannot be treated as error.

We will construct a Lyapunov function $f(x)$ such that a) $L f(x)$ is well-understood and the reflection term $(f_2(x)-f_1(x) ) \lambda 1(x_1 = 0)$ vanishes, and b) the derivatives of $f(x)$ can be controlled to bound $\E\varepsilon(X)$. The following result tells us of the existence of a Lyapunov function $f(x)$ that satisfies both a) and b).  
\begin{lemma} \label{lem:pde}
Let $L f(x)$ be as in \eqref{eq:L} and fix $\kappa > \beta$. The PDE 
\begin{align} 
L f (x) =&\ -\big((x_2 - \kappa/\sqrt{n}) \vee 0\big), \quad x \in \Omega, \label{eq:poisson1}\\
f_1 (0,x_2) =&\ f_2(0,x_2), \quad   x_2 \geq 0 \label{eq:poisson2}
\end{align} 
has a solution $f^{*}(x)$  with $f_1^*(\cdot,x_2), f_{2}^*(x_1,\cdot)$  absolutely continuous for  all $x \in \Omega$, and the second-order weak derivatives satisfy
\begin{align}
& f_{11}^{*}(x), f_{12}^{*}(x), f_{22}^{*}(x) \geq 0,& \quad x \in \Omega, \label{eq:dpos}\\
&f_{11}^{*}(x) = f_{22}^{*}(x) = 0, &\quad x_2 \in [0,\kappa/\sqrt{n}], \label{eq:dzero}\\
&f_{11}^{*}(x) \leq \frac{\sqrt{n}}{\beta}\Big( \frac{\kappa}{\kappa-\beta} + 1\Big), \quad   f_{22}^{*}(x) \leq \frac{\sqrt{n}}{\beta}\Big( 5  +\frac{2\kappa}{\kappa- \beta }   \Big), &\quad x_2 \geq \kappa/\sqrt{n}. \label{eq:d22}
\end{align}
\end{lemma}
The proof of Lemma~\ref{lem:pde} is postponed to Section~\ref{sec:derbounds}, where we also comment on the relationship of $f^*(x)$ to the DFL Lyapunov function of \cite{Stol2015}. We are now in a position to prove that $\{\sqrt{n}X_2\}$ is tight. Fix $\kappa > \beta$ and let $f^*(x)$ be as in Lemma~\ref{lem:pde}. Going back to the Taylor expansion in Lemma~\ref{lem:gentaylor}, 
\begin{align*}
 G_X Af^{*}(q) =&\  Lf^{*}(x) +\big((f_2^{*}(x)-f_1^{*}(x) ) \lambda 1(x_1 = 0)\big)+  \varepsilon(x) \\
 =&\   -\big((x_2 - \kappa/\sqrt{n}) \vee 0\big) +  \varepsilon(x), \quad q \in S.
\end{align*}
Taking expected values on both sides and applying \eqref{eq:liftedgx}, we conclude that  
\begin{align}
  \E \big((X_2 - \kappa/\sqrt{n}) \vee 0\big) = \E \varepsilon(X). \label{eq:stein}
\end{align}
We now prove that $\sqrt{n}\abs{\E \varepsilon(X)}$ is bounded by some constant $C(\beta)$. Recall 
\begin{align}
\varepsilon(x) =&\ - f_2^{*}(x) \lambda 1(q_1 = q_2 = n)  + q_3 \int_{x_2-1/n}^{x_2} f_2^{*}(x_1,u) du  \label{eq:diff2}\\
&+ n\lambda 1(q_1 < n) \int_{x_1}^{x_1+1/n} (x_1 + 1/n - u) f_{11}^{*}(u,x_2) du  \label{eq:diff3}\\
&+ n\lambda 1(q_1 = n, q_2 < n) \int_{x_2}^{x_2+1/n} (x_2 + 1/n - u) f_{22}^{*}(x_1,u) du \label{eq:diff4}\\
&+ (q_1 - q_2) \int_{x_1-1/n}^{x_1} (u - (x_1 - 1/n)) f_{11}^{*}(u,x_2) du \label{eq:diff5}\\
&+ q_2 \int_{x_2-1/n}^{x_2} (u - (x_2 - 1/n)) f_{22}^{*}(x_1,u) du. \label{eq:diff6}
\end{align}
We first argue that lines \eqref{eq:diff3}-\eqref{eq:diff6} are all non-negative and provide an upper bound for them. By \eqref{eq:dpos} we know that \eqref{eq:diff3}-\eqref{eq:diff6} all equal zero when $x_2 < \kappa/\sqrt{n} - 1/n$. 

Now suppose $x_2 \geq \kappa/\sqrt{n} - 1/n$. From \eqref{eq:d22} and the fact that $q_1\geq q_2$ if $q \in S$, we can see that each of \eqref{eq:diff3} and \eqref{eq:diff5} is non-negative and bounded by $\frac{1}{\beta\sqrt{n}}  \Big( \frac{\kappa}{\kappa-\beta} + 1\Big)$. Similarly, \eqref{eq:d22} tells us that each of \eqref{eq:diff4} and \eqref{eq:diff6} is non-negative and bounded by $\frac{1}{\beta\sqrt{n}} \Big(5+\frac{2\kappa}{\kappa-\beta}\Big)$. We conclude that \eqref{eq:diff3}-\eqref{eq:diff6} is bounded by 
\begin{align*}
\frac{1}{\beta\sqrt{n}} \Big( 12  +\frac{6\kappa}{\kappa- \beta }   \Big)1(x_2 \geq \kappa/\sqrt{n} - 1/n).
\end{align*}  
Then \eqref{eq:stein} implies
\begin{align*}
0 \leq&\ \E \big((X_2 - \kappa/\sqrt{n}) \vee 0\big) = \E \varepsilon(X) \notag \\
\leq&\  \frac{1}{\beta\sqrt{n}} \Big( 12  +\frac{6\kappa}{\kappa- \beta }   \Big)\Prob(X_2 \geq \kappa/\sqrt{n} - 1/n) \notag \\
&- f_{2}^{*}(0,1)\lambda\Prob( Q_1=Q_2=n) + \E\bigg[ Q_3 \int_{X_2-1/n}^{X_2} f_2^{*}(X_1,u) du \bigg].
\end{align*}
The term containing $Q_3$ above is present because our  CTMC is infinite dimensional, but the PDE \eqref{eq:poisson1}--\eqref{eq:poisson2} is two-dimensional. To deal with this error term, we invoke Lemma~\ref{lem:q1}:
\begin{align*}
& - f_{2}^{*}(0,1)\lambda\Prob( Q_1=Q_2=n) + \E\bigg[ Q_3 \int_{X_2-1/n}^{X_2} f_2^{*}(X_1,u) du \bigg] \\
=&\ - f_{2}^{*}(0,1)\frac{1}{n} \E Q_3 + \E\bigg[ Q_3 \int_{X_2-1/n}^{X_2} f_2^{*}(X_1,u) du \bigg] \\
=&\  \E\bigg[ Q_3 \int_{X_2-1/n}^{X_2} (f_2^{*}(X_1,u) - f_{2}^{*}(0,1)) du \bigg] \\
\leq&\ 0,
\end{align*}
where in the last inequality we used $f_{21}(x), f_{22}(x) \geq 0$ from \eqref{eq:dpos}. We conclude that 
\begin{align}
\E \big((X_2 - \kappa/\sqrt{n}) \vee 0\big) \leq \frac{1}{\beta\sqrt{n}} \Big( 12  +\frac{6\kappa}{\kappa- \beta }   \Big)\Prob(X_2 \geq \kappa/\sqrt{n} - 1/n), \label{eq:interm3}
\end{align}
and hence 
\begin{align*}
\E \sqrt{n}X_2 = \kappa  + \E (\sqrt{n}X_2 - \kappa ) \leq \kappa + \frac{1}{\beta} \Big( 12  +\frac{6\kappa}{\kappa- \beta }   \Big)\Prob(X_2 \geq \kappa/\sqrt{n} - 1/n),
\end{align*}
which establishes tightness of $\{\sqrt{n} X_2\}_{n=1}^{\infty}$.
The remainder of Theorem~\ref{thm:main}, namely \eqref{eq:thm2}, follows from a relatively simple bootstrapping argument involving \eqref{eq:interm3}. The proof is presented in Section~\ref{app:main2}. In the following section, we describe how to construct $f^*(x)$.


%
%


\section{The Lyapunov function} \label{sec:derbounds}
The aim of this section is to prove Lemma~\ref{lem:pde}. The following informal discussion provides a roadmap of the procedure. Given $x \in \Omega$, \cite[Lemma 1]{GamaEsch2015} implies the existence and uniqueness of a solution $v^{x}(t)$ to the system of integral equations
\begin{align}
&v_1(t) = x_1 - \frac{\beta}{\sqrt{n}}t - \int_{0}^{t} (v_1(s) - v_2(s)) ds - U_1(t), \notag \\
&v_2(t) = x_2 - \int_{0}^{t} v_2(s) ds + U_1(t), \notag \\
&\int_{0}^{\infty} v_1(s) dU_1(s) = 0, \quad U_1(t) \geq 0, \quad  t \geq 0. \label{eq:dynamic}
\end{align}
We refer to $v^{x}(t)$ as the fluid-model corresponding to the fluid-scaled CTMC $\{X(t)\}_{t\geq 0}$. The key idea is that
\begin{align}
 f^*(x) =\int_{0}^{\infty} \big((v_2^{(x)}(s) - \kappa/\sqrt{n}) \vee 0\big) ds \label{eq:ourdfl}
\end{align}
will satisfy the PDE in Lemma~\ref{lem:pde}. Our plan is to a) better understand the behavior of the fluid model and b) to use this knowledge to obtain a closed form representation of \eqref{eq:ourdfl} and bound its derivatives. 
\begin{remark}
Our choice of $f^*(x) =\int_{0}^{\infty} \big((v_2^{x}(s) - \kappa/\sqrt{n}) \vee 0\big) ds$ is a special case of a DFL Lyapunov function. More generally, a DFL Lyapunov function is any function of the form 
\begin{align}
f^{(h)}(x) = \int_{0}^{\infty} h(v^x(s)) ds \label{eq:dflgeneral}
\end{align} 
and we expect it to satisfy 
\begin{align}
&Lf^{(h)}(x) = -h(x), \label{eq:pdefirst}
\end{align} 
where $L$ is the `generator' of the fluid model. 
Section 2 of \cite{Stol2015} proves rigorously that \eqref{eq:pdefirst} is indeed true provided the fluid model satisfies $\frac{d}{dt}	v(t) = F(v(t))$ for some continuous vector field $F(\cdot)$. In our case the vector field is discontinuous because we deal with a linear switching system (more on this in Section~\ref{sec:tau}), and we would need to verify by hand that \eqref{eq:pdefirst} is satisfied for any choice of $h(x)$. 
\end{remark}

 



\subsection{Understanding the fluid model.} \label{sec:tau}
 The following is a heuristic description of the fluid model in \eqref{eq:dynamic}. We refer to $U_1(t)$  as the regulator, because it prevents $v^{x}_1(t)$ from becoming positive. In the absence of this regulator, i.e.\ $U_1(t) \equiv 0$, the system would have been a linear dynamical system 
\begin{align}
\dot v = F(v), \quad \text{ where } \quad F(v) = (-v_1+v_2 - \beta/\sqrt{n},-v_2). \label{eq:fluidlinear}
\end{align}
However, due to the presence of the regulator, for values in the set $\{v_1 = 0,\ v_2 \geq \beta/\sqrt{n}\}$ it is as if  the vector field becomes 
\begin{align}
F(v) = (0,-\beta/\sqrt{n}). \label{eq:fluidlinear2}
\end{align}
Hence, we have a piece-wise linear system, whose dynamics are further illustrated in Figure~\ref{fig:dynamics}.

The fluid model can also be characterized analytically. Suppose the initial condition $x_1< 0$. Then the system behaves according to \eqref{eq:fluidlinear}, meaning that until the vertical axis is hit, i.e. for $t \in [0, \inf_{s\geq 0} \{v_1^x(s) = 0\}]$, its solution is  
\begin{align*}
\begin{pmatrix}
v_1^x(t) \\
v_2^x(t)
\end{pmatrix} = 
\begin{pmatrix}
&-\beta/\sqrt{n} + (x_1 + \beta/\sqrt{n}) e^{-t} + t x_2 e^{-t}  \\
& x_2 e^{-t}  
\end{pmatrix}.
\end{align*}
After the vertical axis is hit, the drift switches to \eqref{eq:fluidlinear2} and $v_2^x(t)$ decreases linearly at a rate $-\beta/\sqrt{n}$ until the point $(0,\beta/\sqrt{n})$ is reached, after which the system behaves according to \eqref{eq:fluidlinear} again.  

\begin{figure}
\hspace{-2.5cm}
\begin{tikzpicture}
 \node (ref) at (0,0){};
\draw[thick] (2,1) -- (8,1)
(7,0)--(7,6);
\node at (6,0.6) { $-\beta/\sqrt{n}$};
\node at (7.6,2) { $\beta/\sqrt{n}$};
\draw[mark options={fill=red}]
      plot[mark=*] coordinates {(6,1)};

\draw[blue,thick,->] (3.2,1) -- (3.6,1);
\draw[blue,thick,->] (4,1) -- (4.5,1);
\draw[blue,thick,->] (3.2,1.5) -- (3.7,1.49);
\draw[blue,thick,->] (4.1,1.485) -- (4.6,1.47);
\draw[blue,thick,->] (5,1.46) -- (5.5,1.4);
\draw[blue,thick,->] (3.7,2) -- (4.2,1.95);
\draw[blue,thick,->] (4.5,1.9) -- (5.2,1.75);
\draw[blue,thick,->] (5.7,1.7) -- (6.2,1.4);
\draw[blue,thick,->] (4.5,2.5) -- (5,2.4);
\draw[blue,thick,->] (5.5,2.3) -- (6,2);

\draw[blue,thick,->] (6.7,1.7) -- (6.6,1.4);
\draw[blue,thick,->] (6.6,1.2) -- (6.4,1.1);

\draw[blue,thick,->] (3.2,1) -- (3.6,1);
\draw[blue,thick,->] (4,1) -- (4.5,1);
\draw[blue,thick,->] (3.2,1.5) -- (3.7,1.49);
\draw[blue,thick,->] (4.1,1.485) -- (4.6,1.47);
\draw[blue,thick,->] (5,1.46) -- (5.5,1.4);
\draw[blue,thick,->] (3.7,2) -- (4.2,1.95);
\draw[blue,thick,->] (4.5,1.9) -- (5.2,1.75);
\draw[blue,thick,->] (5.7,1.7) -- (6.2,1.4);
\draw[blue,thick,->] (4.5,2.5) -- (5,2.4);
\draw[blue,thick,->] (5.5,2.3) -- (6,2);

\draw[blue,thick,->] (6.7,1.7) -- (6.6,1.4);
\draw[blue,thick,->] (6.6,1.2) -- (6.4,1.1);
\draw[dashed] (7,2) .. controls (6,2.7) and (5,3.2) .. (2,3.6); 

 \end{tikzpicture}
 \begin{tikzpicture}
 \node (ref) at (0,0){};
\draw[thick] (2,1) -- (8,1)
(7,0)--(7,6);
\node at (6,0.6) {$-\beta/\sqrt{n}$};
\node at (7.6,2) { $\beta/\sqrt{n}$};
\draw[mark options={fill=red}]
      plot[mark=*] coordinates {(6,1)};

\draw[dashed] (7,2) .. controls (6,2.7) and (5,3.2) .. (2,3.6); 
\draw[blue,thick,->] (6,4.5) -- (6.4,4.35);
\draw[blue,thick,->] (6.6,4.2) -- (7,4);
\draw[red,line width=0.5mm,->] (7,4)-- (7,2);
\draw[blue,thick,->] (7,2) -- (6.6,1.4);
 \end{tikzpicture}
 \caption{Dynamics of the fluid model. Any trajectory starting below the dahsed curve will not hit the vertical axis, and anything starting above the curve will hit the axis and travel down  until reaching the point $(0,\beta/\sqrt{n})$.  } \label{fig:dynamics}
\end{figure}
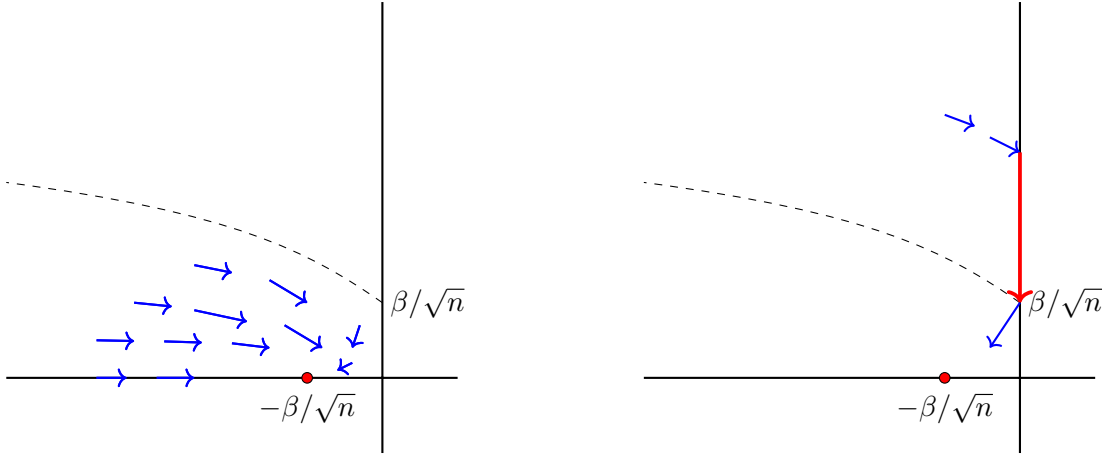

We require two elements to characterize $v^{x}(t)$. The first is the hitting time
\begin{align}
\inf \{t \geq 0 :   v^{x}_1(t) = 0 \}
\end{align}
which is the first hitting time of the vertical axis given initial condition $x$.
 The second is a curve $\Gamma^{(\kappa)} \subset \Omega $. The curve is defined such that for any point $x \in \Gamma^{(\kappa)}$, the fluid path $v^{x}(t)$ first hits the vertical axis at the point $(0,\kappa/\sqrt{n})$. The following two lemmas present rigorous definitions of $\Gamma^{(\kappa)}$ and the hitting time. Lemma~\ref{lem:gamma} is proved in Section~\ref{app:gamma}. 
\begin{lemma} \label{lem:gamma}
Fix $\kappa \geq \beta$ and $x_1 \leq 0$. The nonlinear system 
\begin{align}
&-\beta/\sqrt{n} + (x_1 + \beta/\sqrt{n}) e^{-\eta} + \eta \nu e^{-\eta} = 0, \notag \\
& \nu e^{-\eta} = \kappa/\sqrt{n},\notag \\
&\nu \geq \kappa/\sqrt{n}, \quad \eta \geq 0. \label{eq:nonlin}
\end{align}
has exactly one solution $(\nu^*(x_1), \eta^*(x_1))$. Furthermore, for every $x_1 \leq 0$, let us define the curve 
\begin{align*}
\Gamma^{(\kappa)} = \{ x \in \Omega \ | \ x_2 = \nu^{*}(x_1) \}
\end{align*}
and let 
\begin{align*}
 \gamma^{(\kappa)}(x_1) =  \Big\{ \big(-\beta/\sqrt{n} + (x_1 + \beta/\sqrt{n}) e^{-t} + t \nu^*(x_1) e^{-t}, \nu^*(x_1) e^{-t} \big) \ \Big| \ t \in [0,\eta^*(x_1)] \Big\}.
\end{align*}
Then $\gamma^{(\kappa)}(x_1) \subset \Gamma^{(\kappa)}$ for every $x_1 \leq 0$. 
\end{lemma}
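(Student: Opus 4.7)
The lemma splits into two parts: (i) existence and uniqueness of the solution to the nonlinear system, and (ii) the inclusion $\gamma^{(\kappa)}(x_1)\subset \Gamma^{(\kappa)}$. The guiding geometric picture is that $\Gamma^{(\kappa)}$ is the set of initial conditions from which the (unregulated) fluid trajectory first reaches the vertical axis at the point $(0,\kappa/\sqrt{n})$, and $\gamma^{(\kappa)}(x_1)$ is precisely that trajectory, written in closed form by integrating $\dot v_2=-v_2$, $\dot v_1 = -v_1+v_2-\beta/\sqrt{n}$ from the base point $(x_1,\nu)$.

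For (i), the plan is to use the second equation $\nu e^{-\eta} = \kappa/\sqrt{n}$ to eliminate $\nu$, reducing the system to the single scalar equation
\[
\Phi(\eta) := \kappa\eta + (x_1\sqrt{n}+\beta)e^{-\eta} = \beta, \qquad \eta \geq 0.
\]
I would then verify that $\Phi$ is continuous with $\Phi(0) = x_1\sqrt{n}+\beta \leq \beta$, $\Phi(\eta)\to \infty$ as $\eta\to\infty$, and $\Phi'(0) = \kappa - (x_1\sqrt{n}+\beta) \geq \kappa-\beta \geq 0$. Combined with $\Phi''(\eta) = (x_1\sqrt{n}+\beta)e^{-\eta}$, this gives strict monotonicity of $\Phi$ on $(0,\infty)$ (separating cases by the sign of $x_1\sqrt{n}+\beta$), yielding a unique $\eta^*(x_1)\in[0,\infty)$. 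Setting $\nu^*(x_1) := (\kappa/\sqrt{n})e^{\eta^*(x_1)}$ then produces the unique solution, and the constraint $\nu^*\geq \kappa/\sqrt{n}$ is automatic from $\eta^*\geq 0$.

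For (ii), I would parameterize $\gamma^{(\kappa)}(x_1)$ as $(y_1(t),y_2(t))$ with $t\in[0,\eta^*]$, writing $\eta^*=\eta^*(x_1)$ and $\nu^*=\nu^*(x_1)$, and show directly that for every such $t$ the pair $(\tilde\nu,\tilde\eta) = (y_2(t),\,\eta^*-t)$ solves the nonlinear system with $x_1$ replaced by $y_1(t)$. The second equation $y_2(t)e^{-(\eta^*-t)} = \nu^* e^{-\eta^*} = \kappa/\sqrt{n}$ and the inequality $y_2(t) \geq \kappa/\sqrt{n}$ are immediate, and the first equation collapses to the first equation for $(x_1,\nu^*,\eta^*)$ after substituting the closed-form expression for $y_1(t)$. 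Provided $y_1(t)\in(-\infty,0]$, Part (i) applies at the base point $y_1(t)$, and uniqueness there forces $\nu^*(y_1(t)) = y_2(t)$, i.e.\ $(y_1(t),y_2(t)) \in \Gamma^{(\kappa)}$.

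The main obstacle I anticipate is precisely the side condition $y_1(t)\leq 0$ on $[0,\eta^*]$, which is needed both to ensure $\gamma^{(\kappa)}(x_1)\subset \Omega$ and to invoke Part (i) along the trajectory. I would handle it by examining $g(t) := y_1(t)-y_2(t)+\beta/\sqrt{n}$, which has an explicit closed form in $t$, and checking that $g(t)\leq 0$ on $[0,\eta^*]$. The key identity $\eta^*\nu^* + x_1 + \beta/\sqrt{n} = (\beta/\sqrt{n})e^{\eta^*}$, obtained by multiplying the first equation of the nonlinear system through by $e^{\eta^*}$, together with $\nu^* = (\kappa/\sqrt{n})e^{\eta^*} \geq (\beta/\sqrt{n})e^{\eta^*}$, reduces this to a one-line comparison. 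Once $g\leq 0$ is in hand, the ODE form of the fluid dynamics gives $\dot y_1 = -y_1+y_2-\beta/\sqrt{n} \geq 0$ on $[0,\eta^*]$, so $y_1$ is nondecreasing from $y_1(0)=x_1\leq 0$ to $y_1(\eta^*)=0$ and in particular stays in $(-\infty,0]$ throughout.
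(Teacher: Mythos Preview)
Your proof is correct. Part (ii) follows the same verification-by-uniqueness argument as the paper: show that $(\nu^*(x_1)e^{-t},\,\eta^*(x_1)-t)$ solves the system at the shifted base point $y_1(t)$, then invoke Part (i). You are in fact more careful than the paper here, since you explicitly verify $y_1(t)\le 0$ along the trajectory (needed to apply Part (i)), whereas the paper takes this for granted.

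Part (i), however, is genuinely different. The paper eliminates $\eta$ first: it rewrites the first equation in the form $(-a-\eta)e^{-a-\eta} = b$ with $a=(x_1+\beta/\sqrt n)/\nu$, applies the Lambert $W$ function to solve for $\eta$ in terms of $\nu$, and then reduces to the equation $W\big(\frac{-\beta/\sqrt n}{\nu}e^{-(x_1+\beta/\sqrt n)/\nu}\big)=-\beta/\kappa$, whose unique solution in $\nu$ is established via an auxiliary lemma. You instead eliminate $\nu$ first via $\nu e^{-\eta}=\kappa/\sqrt n$, obtaining the scalar equation $\Phi(\eta)=\kappa\eta+(x_1\sqrt n+\beta)e^{-\eta}=\beta$, and settle uniqueness by a direct convexity/monotonicity argument on $\Phi$. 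Your route is shorter and avoids the Lambert $W$ machinery entirely for this lemma. The tradeoff is that the paper's Lambert-$W$ representation of $\eta$ in terms of $\nu$ (and the auxiliary monotonicity lemma) is reused verbatim in the proof of the next lemma on $\tau(x)$, so some of that setup cost is amortized; your approach would require a separate argument there.
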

Given $\kappa \geq \beta$ and $x\in \Omega$, let $\Gamma^{(\kappa)}$ and $\nu^{*}(x_1)$ be as in Lemma~\ref{lem:gamma}. Let us adopt the convention of writing
\begin{align}
x > \Gamma^{(\kappa)} \text{ if } x_2 > \nu^{*}(x_1), \label{eq:xgeqgamma}
\end{align} 
and define $x \geq \Gamma^{(\kappa)}$, $x < \Gamma^{(\kappa)}$, and $x   \leq  \Gamma^{(\kappa)}$ similarly. Observe that the sets
\begin{align*}
\{x \in \Omega \ |\ x > \Gamma^{(\kappa)}\}, \quad  \{x \in \Omega \ |\ x < \Gamma^{(\kappa)}\}, \quad \text{ and }  \quad  \{x \in \Omega \ |\ x \in \Gamma^{(\kappa)}\}
\end{align*}
 are disjoint, and that their union equals $\Omega$. Furthermore, 
 \begin{align*}
 \{x \in \Omega \ |\ x \geq \Gamma^{(\kappa)}\} \cap \{x \in \Omega \ |\ x \leq \Gamma^{(\kappa)}\} =  \{x \in \Omega \ |\ x \in \Gamma^{(\kappa)}\}.
\end{align*} 
The next lemma characterizes the first hitting time of the vertical axis given initial condition $x$, and shows that this hitting time is differentiable in $x$. It is proved in Section~\ref{app:tau}.
\begin{lemma}  \label{lem:tau}
Fix $\kappa \geq \beta$ and $x \in  (-\infty,0]\times [\kappa/\sqrt{n},\infty)$. Provided it exists, define $\tau(x)$ to be the smallest solution to 
\begin{align*}
\beta/\sqrt{n} - (x_1 + \beta/\sqrt{n}) e^{-\eta} - \eta x_2 e^{-\eta} = 0, \quad \eta \geq 0,
\end{align*}
and define $\tau(x) = \infty$ if no solution exists. It follows immediately that 
\begin{align}
\tau(0,x_2) = 0, \quad x_2 \geq 0. \label{eq:tauboundary}
\end{align}
Furthermore, let $\Gamma^{(\kappa)}$ be as in Lemma~\ref{lem:gamma}.   
\begin{enumerate}
\item  If $x > \Gamma^{(\kappa)}$, then $\tau(x) < \infty$ and 
\begin{align}
x_2 e^{-\tau(x)} > \kappa/\sqrt{n}, \label{eq:gammaprop}
\end{align} 
and if $x \in  \Gamma^{(\kappa)}$, then $\tau(x) < \infty$ and $x_2 e^{-\tau(x)} = \kappa/\sqrt{n}$.  
\item If $\kappa > \beta$, then the function $\tau(x)$ is differentiable at all points $x \geq \Gamma^{(\kappa)}$ with
\begin{align}
\tau_1(x) = - \frac{ e^{-\tau(x)}}{x_2 e^{-\tau(x)} - \beta/\sqrt{n}} \leq 0, \quad \tau_2(x) = \tau_1(x) \tau(x) \leq 0, \quad x \geq \Gamma^{(\kappa)}, \label{eq:tauder}
\end{align}
where $\tau_1(x)$ is understood to be the left derivative when $x_1 = 0$. 
\item 
 For any $\kappa_1,\kappa_2$ with $\beta < \kappa_1 < \kappa_2$, 
\begin{align}
x \geq \Gamma^{(\kappa_2)} \text{ implies } x > \Gamma^{(\kappa_1)}, \label{eq:gammaabove}
\end{align}
i.e. the curve $\Gamma^{(\kappa_2)}$ lies strictly above $\Gamma^{(\kappa_1)}$.
\end{enumerate}
\end{lemma}
Armed with Lemmas~\ref{lem:gamma} and \ref{lem:tau}, we 
 are now in a position to present the function $f^*(x)$ that will satisfy the PDE in Lemma~\ref{lem:pde}. 

\subsection{Constructing $f^*(x)$} \label{sec:candidate}
Fix $\kappa > \beta$ and partition the set $\Omega$ into three subdomains 
\begin{align*}
 \{x_2 \in [0,\kappa/\sqrt{n}]\}, \quad  \{x \leq \Gamma^{(\kappa)},\ x_2 \geq \kappa/\sqrt{n} \}, \quad \text{ and } \quad \{x \geq \Gamma^{(\kappa)} \},
\end{align*}
where $\Gamma^{(\kappa)}$ is as in Lemma~\ref{lem:gamma}. From \eqref{eq:gammaprop} we know that $x \geq \Gamma^{(\kappa)}$ implies $x_2 \geq \kappa/\sqrt{n}$, and therefore any point in $\Omega$ must indeed lie in one of the three subdomains. The following is an informal discussion of the intuition behind the form of $f^*(x)$, which is given in \eqref{eq:candidate} below. 

We already said that we will choose
\begin{align*}
f^*(x) =  \int_{0}^{\infty} \big((v_2^{x}(s) - \kappa/\sqrt{n}) \vee 0\big) ds,
\end{align*}
and so we aim to understand the integral on the right hand side.
Recall that the fluid model is a piece-wise linear model satisfies \eqref{eq:fluidlinear} off the vertical boundary $\{x_1=0\}$, and \eqref{eq:fluidlinear2} on the vertical boundary. The simplest case to work with is if $x_2 \in [0,\kappa/\sqrt{n}]$.  In both \eqref{eq:fluidlinear} and \eqref{eq:fluidlinear2}, the $v_2$ component has a negative drift, meaning $v_2^{x}(s) \leq v_2^{x}(0)$ for all $s \geq 0$. Therefore, we let 
\begin{align*}
 f^*(x)= \int_{0}^{\infty} \big((v_2^{x}(s) - \kappa/\sqrt{n}) \vee 0\big) ds = 0, \quad \text{ if }  x_2 \in [0,\kappa/\sqrt{n}].
\end{align*}

Now suppose $x_2 \geq \kappa/\sqrt{n}$ and $x \leq \Gamma^{(\kappa)}$. This means  that the fluid model's point of contact with the vertical axis is upper bounded by $\kappa/\sqrt{n}$. Therefore, the fluid model simply behaves like the linear system in \eqref{eq:fluidlinear} all the way until $v_2^x(t) = \kappa/\sqrt{n}$, after which time  $\big((v_2^{x}(s) - \kappa/\sqrt{n}) \vee 0\big)$ becomes zero. This tells us that
\begin{align*}
\int_{0}^{\infty} \big((v_2^{x}(s) - \kappa/\sqrt{n}) \vee 0\big) ds =&\ \int_{0}^{\inf_{t \geq 0} \{v_2^{x}(t) = \kappa/\sqrt{n}\}}  \big((v_2^{x}(s) - \kappa/\sqrt{n}) \vee 0\big)  ds \\
=&\ \int_{0}^{\inf_{t \geq 0} \{v_2^{x}(t) = \kappa/\sqrt{n}\}}  (x_2 e^{-s} - \kappa/\sqrt{n})  ds \\
=&\ \int_{0}^{\log(x_2\sqrt{n}/\kappa)}  (x_2 e^{-s} - \kappa/\sqrt{n})  ds \\
=&\  x_2 (1 - \kappa/x_2 \sqrt{n}) - \kappa/\sqrt{n}\log (x_2 \sqrt{n}/\kappa),
\end{align*}
where in the second and third equalities we used the fact that $\dot v_2^x(t) = -v_2^x(t)$ or $v_2^x(t) = x_2 e^{-t}$, and  that the time until $v_2^x(t)=x_2 e^{-t}$ hits $\kappa/\sqrt{n}$ is $\log(x_2\sqrt{n}/\kappa)$. Therefore, for $x_2 \geq \kappa/\sqrt{n}$ and $x \leq \Gamma^{(\kappa)}$ we set 
\begin{align*}
f^*(x) = x_2   - \kappa/ \sqrt{n}  - \kappa/\sqrt{n}\log (x_2 \sqrt{n}/\kappa).
\end{align*}

Lastly, if $x \geq \Gamma^{(\kappa)}$, then the fluid model behaves like \eqref{eq:fluidlinear} from time $0$ until $\tau(x)$, at which point it hits the vertical axis (above $(0,\kappa/\sqrt{n})$). Once the fluid model hits the vertical axis, the $v_2(t)$ component decreases linearly at a rate $-\beta/\sqrt{n}$ until the point $(0,\kappa/\sqrt{n})$ is hit; afterwards, $\big((v_2^{x}(s) - \kappa/\sqrt{n}) \vee 0\big)=0$. Therefore, $\int_{0}^{\infty} \big((v_2^{x}(s) - \kappa/\sqrt{n}) \vee 0\big) ds$ is split into two parts:
\begin{align*}
\int_{0}^{\tau(x)}  \big((v_2^{x}(s) - \kappa/\sqrt{n}) \vee 0\big)  ds + \int_{\tau(x)}^{\infty}  \big((v_2^{x}(s) - \kappa/\sqrt{n}) \vee 0\big)  ds.
\end{align*}
The first term (before the vertical axis is hit) equals
\begin{align*}
& \int_{0}^{\tau(x)}  (x_2 e^{-s} - \kappa/\sqrt{n})   ds =  x_2 (1-e^{-\tau(x)}) - \tau(x) \kappa/\sqrt{n}
\end{align*}
To evaluate the second term, we observe that once the vertical axis is hit at time $\tau(x)$, the $v_2^*(t)$ component decreases linearly at a rate of $\beta/\sqrt{n}$ until it hits the level $\kappa/\sqrt{n}$ (which occurs after $( v_2^x(\tau(x)) - \kappa/\sqrt{n})/\beta/\sqrt{n}$ time units). Therefore, 
\begin{align*}
 \int_{\tau(x)}^{\infty}  \big((v_2^{x}(s) - \kappa/\sqrt{n}) \vee 0\big)  ds =&\  \int_{\tau(x)}^{\tau(x) + ( v_2^x(\tau(x)) - \kappa/\sqrt{n})/\beta/\sqrt{n}}  (v_2^{x}(s) - \kappa/\sqrt{n})    ds \\
=&\  \int_{0}^{ ( v_2^x(\tau(x)) - \kappa/\sqrt{n})/\beta/\sqrt{n}}  (v_2^{x}(\tau(x) + s) - \kappa/\sqrt{n})    ds \\
=&\   \int_{0}^{ ( v_2^x(\tau(x)) - \kappa/\sqrt{n})/\beta/\sqrt{n}}   (x_2 e^{-\tau(x)}- \kappa/\sqrt{n}  - s\beta/\sqrt{n} )    ds \\
=&\   \frac{1}{2}\frac{\sqrt{n}}{\beta}\big(x_2 e^{-\tau(x)} - \kappa/\sqrt{n} \big)^2.
\end{align*}
We conclude our above discussion by defining 
\begin{align}
f^*(x) = 
\begin{cases}
&0, \hfill x_2 \in [0,\kappa/\sqrt{n}],\\
&x_2- \frac{\kappa}{ \sqrt{n}}  - \frac{\kappa}{\sqrt{n}} \log (\sqrt{n}x_2/\kappa),  \hfill  x \leq \Gamma^{(\kappa)} \text{ and } x_2 \geq \kappa/\sqrt{n}, \\
&x_2(1 - e^{-\tau(x)}) - \frac{\kappa}{\sqrt{n}} \tau(x) +   \frac{1}{2} \frac{\sqrt{n}}{\beta} (x_2 e^{-\tau(x)} - \kappa/\sqrt{n})^2,  \quad  x \geq \Gamma^{(\kappa)}.
\end{cases} \label{eq:candidate} 
\end{align}
Note that the above discussion is informal  in the sense that we did not actually prove anything rigorous about the fluid model $v^x(t)$. Instead, we simply came up with a candidate PDE solution $f^*(x)$ based on an intuitive grasp of the fluid model. Nevertheless, the following Lemma confirms our intuition and shows that $f^*(x)$ does indeed solve the PDE; it is proved in Section~\ref{app:dertech}.

\begin{lemma} \label{lem:derivs}
The function $f^*(x)$ in \eqref{eq:candidate} is well-defined, has  $f_1^*(\cdot,x_2), f_{2}^*(x_1,\cdot)$ absolutely continuous for  all $x \in \Omega$, satisfies the PDE \eqref{eq:poisson1}--\eqref{eq:poisson2}, and satisfies the derivative bounds in \eqref{eq:dpos}--\eqref{eq:d22}. 
\end{lemma}
 Lemma~\ref{lem:derivs} was the final piece in the proof of Theorem~\ref{thm:main}.

\section{The diffusion limit: exponential ergodicity.} \label{sec:difflimit}
Theorem~\ref{thm:transient} proves that $\{\sqrt{n}(X_1(t),  X_2(t))\}_{t\geq 0}$ converges to a diffusion limit. Convergence was established only over finite time intervals, but convergence of steady-state distributions was not justified. In fact, it has not been shown that the process in \eqref{eq:diffusion} is even positive recurrent. We show that not only is this process positive recurrent (Theorem~\ref{thm:ergodic}), but it is also exponentially ergodic. The proof involves a very similar approach to that of Theorem~\ref{thm:main}. Namely, our proof will again revolve around  comparing the diffusion generator to its fluid model counterpart.

Recall that the diffusion limit in Theorem~\ref{thm:transient} is 
\begin{align*}
&Y_1(t) = Y_1(0) + \sqrt{2} W(t) - \beta t + \int_{0}^{t} (-Y_1(s) + Y_2(s)) ds - U(t), \notag \\
&Y_2(t) = Y_2(0) + U(t)  - \int_{0}^{t} Y_2(s) ds,
\end{align*}
where $U(t)$ is the regulator and satisfies $\int_{0}^{\infty} 1(Y_1(t)<0) d U(t) = 0$. To discuss geometric ergodicity, we introduce the extended generator of this diffusion process. Denote by $D(G_Y)$ the set of all functions $f:\Omega \to \R$ for which there exists a measurable function $g: \Omega \to \R$ such that, for each $x \in \Omega$, $t \geq 0$, 
\begin{align}
 \E_{x} f(Y_1(t),Y_2(t)) - \E_x f(Y_1(0),Y_2(0)) =&\ \E_x \int_{0}^{t} g(Y_1(s),Y_2(s)) ds, \label{eq:extendedgen}\\
 \int_{0}^{t}\E_x \abs{g(Y_1(s),Y_2(s))} ds < \infty. \notag
\end{align}
We write $G_Y f = g$ and call $G_Y$ the extended generator of $\{Y(t)\}$. An application of Ito's lemma (see \cite[Theorem 2]{HarrReim1981} for an example of Ito's lemma in the presence of regulators)  states that for any function $f(x) \in C^2(\Omega)$,
\begin{align*}
& \E_{x} f(Y_1(t),Y_2(t)) - \E_x f(Y_1(0),Y_2(0)) \\
=&\ \E_x \int_{0}^{t}\Big( (-Y_1(s) + Y_2(s) - \beta)  f_1(Y_1(s),Y_2(s)) - Y_2(s) f_2(Y_1(s),Y_2(s)) + f_{11}(Y_1(s),Y_2(s)) \Big)ds \\
&+ \E_x\int_{0}^{t} \big(-f_{1}(0,Y_2(s)) + f_2(0,Y_2(s))\big) dU(s).
\end{align*}
Comparing the above expansion to \eqref{eq:extendedgen}, we see that for functions $f(x)$ with $f_1(0,x_2) = f_2(0,x_2)$,
\begin{align*}
G_Y f(x) = (-x_1 + x_2- \beta ) f_1(x) - x_2 f_2(x) + f_{11}(x), \quad x \in \Omega.
\end{align*}
The following theorem proves the existence of a function satsfying a Foster-Lyapunov condition that is needed for exponential ergodicity.
\begin{theorem} \label{thm:expergodic}
Fix $\beta> 0$. There exist positive constants $c$ and $d$, a compact set $K$, and a function $V : \Omega \to [1,\infty)$ with $V(x) \to \infty$ as $\abs{x } \to \infty$ and $V_1(0,x_2)=V_2(0,x_2)$ such that
\begin{align}
G_Y V(x) \leq&\ - c V(x) + d 1(x \in K), \label{eq:expgen}
\end{align}
The function $V(x)$, $c$, $d$, and $K$ all depend on $\beta$.
\end{theorem}
The proof of the theorem is given in Section~\ref{sec:proofergod}. A consequence of Theorem~\ref{thm:expergodic} is exponential ergodicity of the diffusion in \eqref{eq:diffusion}: the following corollary is an immediate consequence of \eqref{eq:expgen} and \cite[Theorem 5.2]{DownMeynTwee1995}.
\begin{corollary}
The diffusion process $\{(Y_1(t),Y_2(t))\}_{t \geq 0}$ defined in \eqref{eq:diffusion} is positive recurrent. Furthermore, let  $Y = (Y_1,Y_2)$ be the vector having its stationary distribution, and let $V(x)$ be the function from Theorem~\ref{thm:expergodic}. There exist constants $b < 1$ and $B < \infty$ such that 
\begin{align*}
\sup_{\abs{f}\leq V} \abs{ \E_{x} f(Y(t)) - \E f(Y)} \leq B V(x) b^{t}
\end{align*}
\end{corollary}

\subsection{Proving Theorem~\ref{thm:expergodic}.} \label{sec:proofergod}
The proof of Theorem~\ref{thm:expergodic} follows a similar line of reasoning as the proof of Theorem~\ref{thm:main}. Namely, we view the diffusion generator as 
\begin{align*}
G_Y f(x) = L f(x) + \text{error},
\end{align*}
and we choose a function such that $Lf(x)$ is well behaved. The error term will contain derivatives of $f(x)$, and so we want those to be controlled as well. Let us examine how $G_Y$ acts on Lyapunov functions of the form $V(x) = e^{f(x/\sqrt{n})}$ (assuming for now that $V_1(0,x_2) = V_2(0,x_2)$):
\begin{align} 
 G_Y V(x) =&\ (-x_1 + x_2 - \beta) V_1(x) - x_2 V_2(x) + V_{11}(x) \notag \\
=&\ (-x_1/\sqrt{n} + x_2/\sqrt{n} - \beta/\sqrt{n}) (f_1 (x/\sqrt{n}))\alpha V(x) \notag \\
&- \frac{x_2}{\sqrt{n}}(f_2 (x/\sqrt{n}))\alpha V(x)  + V_{11}(x)\notag \\
=&\ (L f (x/\sqrt{n}) )\alpha V(x) + \frac{1}{n}\big(\alpha f_{11} (x/\sqrt{n}) + \alpha^2 (f_1 (x/\sqrt{n}))^2 \big)V(x).\label{eq:diffusiongen}
\end{align}
Therefore, to satisfy a condition like \eqref{eq:expgen}, it suffices to choose a function $f(x)$ such that $L f(x/\sqrt{n}) \leq -c$ and both  $f_{1}(x/\sqrt{n})$ and $f_{11}(x/\sqrt{n})$ are sufficiently under control. One candidate is to set $V(x)$ equal to 
\begin{align}
\exp \Big(\int_{0}^{\infty} 1\big(v^{x/\sqrt{n}}(t) \not \in [-\kappa/\sqrt{n},0]\times[0,\kappa/\sqrt{n}]\big) \Big), \label{eq:hittime}
\end{align}
i.e. the exponential of the fluid hitting time to the set $[-\kappa/\sqrt{n},0] \times [0,\kappa/\sqrt{n}]$. The integral in the exponent is a DFL Lyapunov function like in \eqref{eq:dflgeneral}, and so we hope that 
\begin{align*}
L \int_{0}^{\infty} 1\big(v^{x/\sqrt{n}}(t) \not \in [-\kappa/\sqrt{n},0]\times[0,\kappa/\sqrt{n}]\big)  = -1\big({x/\sqrt{n}} \not \in [-\kappa/\sqrt{n},0]\times[0,\kappa/\sqrt{n}]\big).
\end{align*}
However, we cannot use \eqref{eq:hittime} directly because $G_Y$ acts on $C^2(\Omega)$ functions, and \eqref{eq:hittime} does not have the required regularity; the indicator inside the integral is a discontinuous function. Instead, we will use a smoothed relative of \eqref{eq:hittime}. Let us define a smoothed indicator. For any $\ell  < u$,  let 
\begin{align}
\phi^{(\ell,u)}(x) = 
\begin{cases}
0, \quad &x \leq \ell,\\
(x-\ell)^2\Big( \frac{-(x-\ell)}{ ((u+\ell)/2-\ell )^2(u-\ell)} + \frac{2}{ ((u+\ell)/2-\ell)(u-\ell)}\Big), \quad &x \in [\ell,(u+\ell)/2],\\
1 - (x-u)^2\Big( \frac{(x-u)}{ ((u+\ell)/2-u )^2(u-\ell)} - \frac{2}{ ((u+\ell)/2-u )(u-\ell)}\Big), \quad &x \in [(u+\ell)/2,u],\\
1, \quad &x \geq u.
\end{cases} \label{eq:phi}
\end{align}
It is straightforward to check that $\phi^{(\ell,u)}(x)$ has an absolutely continuous first derivative, and that 
\begin{align}
&\ (\phi^{(\ell,u)})'(\ell) = (\phi^{(\ell,u)})'(u) = 0 \label{eq:phiderzero}\\
&\ \abs{(\phi^{(\ell,u)})'(x)} \leq \frac{4}{u-\ell}, \quad \text{ and } \quad \abs{(\phi^{(\ell,u)})''(x)} \leq \frac{12}{(u-\ell)^2} . \label{eq:phider}
\end{align}
Fix $\kappa_2>\kappa_1>\beta$ and $\alpha \in (0,1)$. The Lyapunov function we will use to prove Theorem~\ref{thm:expergodic} is    
\begin{align}
V^{(\kappa_1,\kappa_2)}(x) = \exp\big(\alpha (f^{(1)}(x/\sqrt{n}) + f^{(2)}(x/\sqrt{n}))\big)
\end{align}
where $f^{(1)}(x)$ and $f^{(2)}(x)$ will be DFL Lyapunov functions constructed to satisfy 
\begin{align}
L f^{(1)}(x) =&\ -\phi^{(\kappa_1/\sqrt{n} ,\kappa_2/\sqrt{n})} (-x_1), \quad x \in \Omega, \notag\\
f_1^{(1)}(0,x_2) =&\ f_2^{(1)}(0,x_2), \quad   x_2 \geq 0, \label{eq:pde1}
\end{align} 
and 
\begin{align}
L f^{(2)}(x) =&\ -\phi^{(\kappa_1/\sqrt{n} ,\kappa_2/\sqrt{n})} (x_2), \quad x \in \Omega, \notag\\
f_1^{(2)}(0,x_2) =&\ f_2^{(2)}(0,x_2), \quad   x_2 \geq 0. \label{eq:pde2}
\end{align} 
Let us omit the superscript from $V(x)$ for convenience and set 
\begin{align*}
f^{(\Sigma)} (x) = f^{(1)}(x) + f^{(2)}(x).
\end{align*} 
Observe that $V_1(0,x_2)=V_2(0,x_2)$ for all $x_2\geq 0$ by \eqref{eq:pde1}--\eqref{eq:pde2}. 
Since $L f^{(\Sigma)} (x/\sqrt{n}) = Lf^{(1)} (x/\sqrt{n}) + Lf^{(2)} (x/\sqrt{n})$, 
\begin{align}
& G_Y V(x) \notag  \\
=&\ (L f^{(\Sigma)} (x/\sqrt{n}) )\alpha V(x) + \frac{1}{n}\big(\alpha f_{11}^{(\Sigma)} (x/\sqrt{n}) + \alpha^2 (f_1^{(\Sigma)} (x/\sqrt{n}))^2 \big)V(x) \notag \\
=&\ (-\phi^{(\kappa_1/\sqrt{n} ,\kappa_2/\sqrt{n})} (-x_1/\sqrt{n}) -\phi^{(\kappa_1/\sqrt{n} ,\kappa_2/\sqrt{n})} (x_2/\sqrt{n}) )\alpha V(x)  \notag \\
&+ \frac{1}{n}\big(\alpha f_{11}^{(\Sigma)} (x/\sqrt{n}) + \alpha^2 (f_1^{(\Sigma)} (x/\sqrt{n}))^2 \big)V(x) \notag  \\
&\leq -\alpha V(x) 1(x \not \in [-\kappa_2,0]\times [0,\kappa_2])+ \frac{1}{n}\big(\alpha f_{11}^{(\Sigma)} (x/\sqrt{n}) + \alpha^2 (f_1^{(\Sigma)} (x/\sqrt{n}))^2 \big)V(x) \notag \\
=&\  \alpha\Big(-1 +  \frac{1}{n}\big(f_{11}^{(\Sigma)} (x/\sqrt{n}) + \alpha (f_1^{(\Sigma)} (x/\sqrt{n}))^2 \big)\Big)V(x) \label{eq:GY1}\\
&+ \frac{1}{n}\big(\alpha f_{11}^{(\Sigma)} (x/\sqrt{n}) + \alpha^2 (f_1^{(\Sigma)} (x/\sqrt{n}))^2 \big)V(x) 1(x  \in [-\kappa_2,0]\times [0,\kappa_2]), \label{eq:GY2}
\end{align}
where in the first inequality we used the fact that $\phi^{(\kappa_1/\sqrt{n} ,\kappa_2/\sqrt{n})} (x/\sqrt{n}) = 1 $ for $x \geq \kappa_2$. Let us compare \eqref{eq:GY1}--\eqref{eq:GY2} to our desired result in \eqref{eq:expgen} to see that we need bounds on  $f_1^{(\Sigma)} (x)$, $f_{11}^{(\Sigma)}(x)$, and $V(x)1(x  \in [-\kappa_2,0]\times [0,\kappa_2])$. The following lemma presents all the properties of $f^{(1)}(x)$ and $f^{(2)}(x)$ and their derivatives that we will need to prove Theorem~\ref{thm:expergodic}. It is proved in Section~\ref{app:expergodderivs}. 
\begin{lemma} \label{lem:expergodderivs}
Fix $\kappa_1< \kappa_2$  such that $\kappa_1 > \beta$. Then both PDE's \eqref{eq:pde1} and \eqref{eq:pde2} have solutions $f^{(1)}(x)$ and $f^{(2)}(x)$, respectively. The solutions belong to $C^2(\Omega)$, and for every $\epsilon > 0$,
\begin{align}
&f^{(1)}(x) \leq \log 2, \quad &x \in [-\kappa_2/\sqrt{n},0]\times [0,\kappa_2/\sqrt{n}],\label{eq:expfsum1}\\
&  f^{(2)}(x) \leq   \log 2 + \frac{\epsilon}{\beta}, \quad &x \in [-\kappa_2/\sqrt{n},0]\times [0,\kappa_2/\sqrt{n}], \label{eq:expfsum2}
\end{align}
 and for all $x \in \Omega$,
\begin{align}
&\big| f_{1}^{(1)}(x) \big| \leq \frac{4\sqrt{n}}{\epsilon}\log 2, \quad  \big|f_{11}^{(1)}(x)\big| \leq \frac{12n}{\epsilon^2}\log 2, \label{eq:expf1} \\
&\big| f_{1}^{(2)}(x) \big| \leq \frac{\sqrt{n}}{\beta}, \quad \big|f_{11}^{(2)}(x)\big| \leq \frac{n}{\beta\epsilon}\Big(1  + 4\frac{\beta+\epsilon}{\epsilon}   \Big). \label{eq:expf2}
\end{align}
\end{lemma} 
With these derivative bounds, we are ready to prove Theorem~\ref{thm:expergodic}.
\begin{proof}[Proof of Theorem~\ref{thm:expergodic}]
Fix $\kappa_1 < \kappa_2$ with $\kappa_1 > \beta$ and $\alpha > 0$, let $f^{(1)}(x)$ and $f^{(2)}(x)$ be as in Lemma~\ref{lem:expergodderivs}, and let $V^{(\kappa_1,\kappa_2)}(x) = e^{\alpha (f^{(1)}(x/\sqrt{n}) + f^{(2)}(x/\sqrt{n}))}$. Our goal is to find positive constants $c,d$ such that \eqref{eq:expgen} is satisfied. It follows from \eqref{eq:GY1}--\eqref{eq:GY2} that
\begin{align*}
G_Y V(x) \leq&\ \alpha\Big(-1 +  \frac{1}{n}\big(f_{11}^{(\Sigma)} (x/\sqrt{n}) + \alpha (f_1^{(\Sigma)} (x/\sqrt{n}))^2 \big)\Big)V(x) \\
&+ \frac{1}{n}\big(\alpha f_{11}^{(\Sigma)} (x/\sqrt{n}) + \alpha^2 (f_1^{(\Sigma)} (x/\sqrt{n}))^2 \big)V(x) 1(x  \in [-\kappa_2,0]\times [0,\kappa_2])
\end{align*}
By \eqref{eq:expf1}--\eqref{eq:expf2}, 
\begin{align*}
\frac{1}{n}\big|f_{11}^{(\Sigma)} (x/\sqrt{n}) + \alpha (f_1^{(\Sigma)} (x/\sqrt{n}))^2 \big| =& \ \frac{1}{n}\big|f_{11}^{(1)}(x/\sqrt{n})+f_{11}^{(2)} (x/\sqrt{n}) + \alpha (f_1^{(1)} (x/\sqrt{n}) + f_1^{(2)} (x/\sqrt{n}))^2 \big|\\
\leq&\ \frac{1}{n}\Big(\frac{12n}{\epsilon^2}\log 2+\frac{n}{\beta\epsilon}\Big(1  + 4\frac{\beta+\epsilon}{\epsilon}   \Big) + \alpha \big(\frac{4\sqrt{n}}{\epsilon}\log 2 + \frac{\sqrt{n}}{\beta}\big)^2 \Big).
\end{align*}
Note that the right hand side above is independent of $n$. Furthermore, by choosing $\epsilon$ large enough and $\alpha$ small enough, the term on the right hand side can be made to be less than one, meaning there exists a $c > 0$ such that
\begin{align*}
G_Y V(x) \leq&\ -cV(x)+ \frac{1}{n}\big(\alpha f_{11}^{(\Sigma)} (x/\sqrt{n}) + \alpha^2 (f_1^{(\Sigma)} (x/\sqrt{n}))^2 \big)V(x) 1(x  \in [-\kappa_2,0]\times [0,\kappa_2])
\end{align*}
To bound the second term on the right hand side we use \eqref{eq:expfsum1}--\eqref{eq:expfsum2} and the fact that $V(x) = e^{\alpha (f^{(1)}(x/\sqrt{n}) + f^{(2)}(x/\sqrt{n}))}$ tell us that 
\begin{align*}
V(x) 1(x  \in [-\kappa_2,0]\times [0,\kappa_2]) \leq \exp\Big(\log 2 + \log 2 + \frac{\epsilon}{\beta} \Big)1(x  \in [-\kappa_2,0]\times [0,\kappa_2])
\end{align*}
Note that the upper bound does not depend on $n$. Therefore,
\begin{align*}
G_Y V(x) \leq&\ -cV(x) + d 1(x \in K),
\end{align*}
where $K = [-\kappa_2,0]\times [0,\kappa_2]$, and $c,d$ are positive constants that depend only on $\beta$ and no other system parameters such as $\lambda$ or $n$. 
\end{proof}
\begin{remark}
In the proof of Theorem~\ref{thm:ergodic} we compare the generator of the diffusion process $G_Y$ to $L$, which can be thought of as the generator of the associated fluid model. One may wonder why we do not use a similar argument to compare $L$ to $G_X$, and prove that the CTMC is also exponentially ergodic. The answer is that the CTMC is infinite dimensional, while the operator $L$ acts on functions of only two variables. As a result, comparing $G_X$ to $L$ leads to excess error terms that $L$ does not account for, e.g.\ $q_3$ in \eqref{eq:diff2}. Although we were able to get around this issue in the proof of Theorem~\ref{thm:main} by taking expected values, the same trick will not work now because  \eqref{eq:expgen} has to hold for every state. To prove exponential ergodicity, one needs to replace the operator $L$ and the PDE \eqref{eq:poisson1} by infinite-dimensional counterparts corresponding to the infinite-dimensional fluid model of $\{(X_1(t),X_2(t),X_3(t),\ldots) \}_{t\geq 0}$. This is left as an open problem to the interested reader, as Theorem~\ref{thm:ergodic} is sufficient for the purposes of illustrating the proof technique.
\end{remark}

\section{Conclusion}
This paper contains a steady-state analysis of the JSQ model in the Halfin-Whitt regime, using the generator expansion/DFL Lyapunov function methodology to prove tightness of the customer count process. The proof procedure is to 1) write down the CTMC generator 2) perform Taylor expansion on it to extract a fluid model generator and 3) set up a PDE related to the fluid model and bound the derivatives of the solution to said PDE. The bottleneck of this methodology are the derivative bounds of the DFL Lyapunov function; this can only be done if the fluid model is relatively well understood. In addition to proving tightness we saw in Section~\ref{sec:proofergod}, that exponentials of DFL Lyapunov functions can be used to prove exponential ergodicity of a process. 

One important open problem that this paper did not address is the following. When DFL Lyapunov functions were discussed in \cite{Stol2015}, the author considered fluid models with continuous vector fields (i.e.\  $\frac{d}{dt}	v(t) = F(v(t))$ where $F(\cdot)$ is continuous). In that setting, \cite{Stol2015} showed by a simple argument that 
\begin{align}
L \int_{0}^{\infty} h(v^x(s)) ds = - h(x), \label{eq:keyproperty}
\end{align}
where $L$ is the `generator' of the fluid model. Our JSQ model does not satisfy the continuity condition in \cite{Stol2015}. Furthermore,  our PDE has a reflecting boundary condition
\begin{align}
f_1(0,x_2) = f_2(0,x_2), \label{eq:bdrycond}
\end{align} 
which appears due to the presence of the regulator in the fluid model. In this paper we must verify in a brute force manner that our DFL Lyapunov function satisfies both \eqref{eq:keyproperty} and \eqref{eq:bdrycond}. It would be very useful to prove that $\int_{0}^{\infty} h(v^x(s))$ automatically satisfies the aforementioned properties even in the presence of a discontinuous vector field and regulators in the fluid model.

%
%
%

\section*{Acknowledgments.}

This work was inspired by a talk given by David Gamarnik at Northwestern University's Kellogg School of Business in October 2017.

\appendix

\section{Miscellaneous proofs.}
This appendix contains proofs to a few miscellaneous lemmas used in the paper.
\subsection{Lemma~\ref{lem:gz}.} \label{app:gzlemma}
\begin{proof}[Proof of Lemma~\ref{lem:gz}] 
A sufficient condition to ensure that
\begin{align*}
\E \big[ G_{Q} f(Q) \big] = 0
\end{align*}
is given by \cite[Proposition 1.1]{Hend1997} (alternatively, see \cite[Proposition 3]{GlynZeev2008}). Namely, we require that 
\begin{align}
\E \Big[\big| G_{Q} (Q,Q) f(Q)\big| \Big] < \infty, \label{eq:gzcond}
\end{align}
where $G_{Q} (q,q)$ is the diagonal entry of the generator matrix $G_{Q}$ corresponding to state $q \in S$. It is not hard to check that in the JSQ system, $\abs{G_{Q} (q,q)} < n\lambda + n$ for all states $q \in S$. Our assumption that $\E |f(Q)| < \infty$, is enough to ensure \eqref{eq:gzcond} is satisfied.
\end{proof}

\subsection{Lemma~\ref{lem:q1}} \label{app:q1}
\begin{proof}[Proof of Lemma~\ref{lem:q1}]
Fix $M > 0$ and let $f(q) = \min \big( M, \sum_{i=1}^{\infty} q_i\big)$. Then 
\begin{align*}
G_{Q} f(q) = n\lambda 1\big(\sum_{i=1}^{\infty} q_i < M\big)   - q_1 1\big( \sum_{i=1}^{\infty} q_i \leq M\big).
\end{align*}
Using \eqref{eq:bar}, 
\begin{align*}
n\lambda\Prob\big(T < M\big) = \E \Big(Q_1 1\big( T \leq M\big)\Big),
\end{align*}
where $T = \sum_{i=1}^{\infty} Q_i$ is the total customer count. Although the infinite series in the definition of $T$ may seem worrying at first, stability of the JSQ model in fact implies that $T < \infty$ almost surely. To see why this is true, observe that an alternative way to describe the JSQ model is via the CTMC $\{(S_1(t), \ldots, S_n(t))\}_{t \geq 0}$, where $S_i(t)$ be the number of customers assigned to server $i$ at time $t$; we can view $Q(t)$ as a deterministic function of $(S_1(t), \ldots, S_n(t))$. This new CTMC is also positive recurrent, but now the total number of customers in the system at time $t$ is the finite sum $\sum_{i=1}^{n} S_i(t)$. Therefore, $T < \infty$ almost surely, and we can take $M \to \infty$ and apply the monotone convergence theorem to conclude that 
\begin{align*}
\E Q_1 = n\lambda.
\end{align*}
Repeating the argument above with $f(q) = \min \big( M, \sum_{j=i}^{\infty} q_j\big)$ gives us
\begin{align*}
n\lambda \Prob(Q_{1}= \ldots = Q_{i-1}=n) =  \E Q_i.
\end{align*}

\end{proof}

\subsection{Lemma~\ref{lem:gentaylor}.} \label{app:taylor}
\begin{proof}[Proof of Lemma~\ref{lem:gentaylor}]
The CTMC generator satisfies
\begin{align}
G_X A f(q) =&\ n\lambda 1( q_1< n) \big( f(x_1 + 1/n, x_2) - f(x_1,x_2) \big) \notag  \\
& +n\lambda 1( q_1 = n, q_2< n) \big( f(x_1 , x_2+ 1/n) - f(x_1,x_2) \big) \notag \\
& +(q_1 - q_2) \big( f(x_1 -1/n, x_2) - f(x_1,x_2) \big) \notag \\
& +(q_2 - q_3) \big( f(x_1 , x_2-1/n) - f(x_1,x_2) \big). \label{eq:generexplicit}
\end{align}
It is straightforward to verify that
\begin{align}
f(x + e^{(1)}/n) - f(x) =&\ \frac{1}{n} f_1(x) + \int_{x_1}^{x_1+1/n} (x_1 + 1/n - u) f_{11}(u,x_2) du, \notag \\
f(x - e^{(1)}/n) - f(x) =&\ -\frac{1}{n} f_1(x) + \int_{x_1-1/n}^{x_1} (u - (x_1 - 1/n)) f_{11}(u) du, \label{eq:abscont}
\end{align}
and that a similar expansion holds for $f(x + e^{(2)}/n) \pm f(x)$.
Applying \eqref{eq:abscont} to \eqref{eq:generexplicit} (but leaving the $q_3$ term untouched), we see that
\begin{align}
G_X Af(q) =&\ f_1(x) \frac{1}{n} \big( n\lambda 1(q_1 < n) - (q_1 - q_2) \big) + f_2(x) \frac{1}{n} \big( n\lambda 1(q_1 = n, q_2 < n) - q_2 \big) \notag \\
& + n\lambda 1(q_1 < n) \int_{x_1}^{x_1+1/n} (x_1 + 1/n - u) f_{11}(u) du  \notag \\
&+ n\lambda 1(q_1 = n, q_2 < n) \int_{x_2}^{x_2+1/n} (x_2 + 1/n - u) f_{22}(u) du \notag \\
&+ (q_1 - q_2) \int_{x_1-1/n}^{x_1} (u - (x_1 - 1/n)) f_{11}(u) du \notag \\
&+ q_2 \int_{x_2-1/n}^{x_2} (u - (x_2 - 1/n)) f_{22}(u) du \notag \\
& - q_3 \big( f(x_1 , x_2-1/n) - f(x_1,x_2) \big). \label{eq:gen}
\end{align}
To conclude, we rewrite the first line of \eqref{eq:gen} as 
\begin{align*}
& f_1(x) \frac{1}{n} \big( n\lambda - (q_1 - q_2) \big) - f_2(x) \frac{1}{n} q_2  \\
&+ (f_2(x)-f_1(x) ) \lambda 1(q_1 = n)  - f_2(x) \lambda 1(q_1 = q_2 = n) \\
=&\ f_1(x) \big( -\beta/\sqrt{n} - x_1 + x_2 \big) - x_2f_2(x)  \\
&+ (f_2(x)-f_1(x) ) \lambda 1(q_1 = n)  - f_2(x) \lambda 1(q_1 = q_2 = n) \\
=&\ L f(x) +  (f_2(x)-f_1(x) ) \lambda 1(q_1 = n)  - f_2(x) \lambda 1(q_1 = q_2 = n).
\end{align*}

\end{proof}
\subsection{Proving \eqref{eq:thm2} } \label{app:main2}
Our goal is to prove \eqref{eq:thm2}, or that $\E nX_i = \E Q_i \leq C(\beta)$ for all $i \geq 3$. Since $\E Q_i \leq \E Q_3$ for $i \geq 3$, it suffices to consider $i = 3$. Our starting point is \eqref{eq:interm3}, which we recall below:
\begin{align}
\E \big((X_2 - \kappa/\sqrt{n}) \vee 0\big) \leq \frac{1}{\beta\sqrt{n}} \Big( 12  +\frac{6\kappa}{\kappa- \beta }   \Big)\Prob(X_2 \geq \kappa/\sqrt{n} - 1/n). \label{eq:interm4}
\end{align}
Consider $n$ such that $\max(\beta/\sqrt{n},1/n)<1$, and fix $\tilde \kappa  \in ( \max(\beta/\sqrt{n},1/n), 1)$. Invoke \eqref{eq:interm4} with $\sqrt{n} \tilde \kappa$ in place of $\kappa$ there to see that  
\begin{align}
\E \Big((X_2 - \tilde \kappa) 1(X_2 \geq \tilde \kappa)\Big) \leq&\ \frac{1}{\beta\sqrt{n}} \Big(12+\frac{6\tilde \kappa}{\tilde \kappa- \beta/\sqrt{n}}\Big)\Prob(X_2 \geq \tilde \kappa- 1/n) \notag \\
=&\  \frac{1}{\beta n} \Big(12+\frac{6\tilde \kappa}{\tilde \kappa- \beta/\sqrt{n}}\Big)\sqrt{n}\E\Big(\frac{X_2}{X_2} 1(X_2 \geq \tilde \kappa- 1/n)\Big)\notag \\
\leq&\  \frac{1}{\beta n} \Big(12+\frac{6\tilde \kappa}{\tilde \kappa- \beta/\sqrt{n}}\Big)\frac{1}{\tilde \kappa- 1/n} \E \sqrt{n} X_2 \notag \\
\leq&\  \frac{1}{\beta n} \Big(12+\frac{6\tilde \kappa}{\tilde \kappa- \beta/\sqrt{n}}\Big)\frac{1}{\tilde \kappa- 1/n}C(\beta), \label{eq:tailbound}
\end{align}
where in the last inequality we used \eqref{eq:thm1}. Therefore, 
\begin{align*}
\frac{1}{\beta n} \Big(12+\frac{6\tilde \kappa}{\tilde \kappa- \beta/\sqrt{n}}\Big)\frac{1}{\tilde \kappa - 1/n} C(\beta) \geq&\ \E \Big((X_2 - \tilde \kappa) 1(X_2 \geq \tilde \kappa)\Big) \\
\geq&\ (1 - \tilde \kappa)\Prob(X_2 = 1)\\
=&\ (1 - \tilde \kappa)\Prob(Q_2 = n)\\
\geq&\ (1-\tilde \kappa) \frac{1}{n} \E Q_3,
\end{align*}
where in the second inequality we used the fact that $\tilde \kappa < 1$,  and in the last inequality we used Lemma~\ref{lem:q1}.

\begin{remark}
The bound in \eqref{eq:thm2} will be sufficient for our purposes, but it is unlikely to be tight. The argument in \eqref{eq:tailbound} can be modified by observing that for any integer $m > 0$,  
\begin{align*}
\Prob(X_2 \geq \tilde \kappa - 1/n) =&\ \frac{n^m}{n^m} \E\Big(\frac{X_2^{2m}}{X_2^{2m}} 1(X_2 \geq \tilde \kappa - 1/n)\Big) \leq \frac{1}{n^m  (\tilde \kappa  - 1/n)^{2m}} \E (\sqrt{n} X_2)^{2m}.
\end{align*}
Provided we have a bound on $\E (\sqrt{n} X_2)^{2m}$ that is independent of $n$, it follows that $\E Q_3 \leq C(\beta)/n^{m-1/2}$. Although we have not done so, we believe the arguments used in Theorem~\ref{thm:main} can be extended to provide the necessary bounds on $\E (\sqrt{n} X_2)^{2m}$.
\end{remark}

\subsection{Proposition~\ref{thm:interchange}} \label{app:interchange} 
\begin{proof}[Proof of Proposition~\ref{thm:interchange}]
Lemma~\ref{lem:q1} and Theorem~\ref{thm:main} imply that the sequence  $\{\sqrt{n}(X_1,X_2)\}_{n}$ is tight. It follows by Prohorov's Theorem \cite{Bill1999} that the sequence is also relatively compact. We will now show that any subsequence of $\{\sqrt{n}(X_1,X_2)\}_{n}$ has a further subsequence that converges weakly to $Y$.

Fix $n > 0$ and initialize the process $\{X(t)\}_{t \geq 0}$ by letting $\sqrt{n}X(0)$ have the same distribution as $\sqrt{n}X$. Prohorov's Theorem implies that for any subsequence
\begin{align*}
\{\sqrt{n'}X(0)\}_{n'} \subset \{\sqrt{n}X(0)\}_{n},
\end{align*}
there exists a further subsequence
\begin{align*}
\{\sqrt{n''}X(0)\}_{n''} \subset \{\sqrt{n'}X(0)\}_{n'}
\end{align*}
that converges weakly to some random vector $Y^{(0)} = (Y_1^{(0)}, Y_2^{(0)}, \ldots)$. Theorem~\ref{thm:main} implies that $Y_i^{(0)} = 0$ for $i \geq 3$. Now for any $t \geq 0$, let $(Y_1(t),Y_2(t))$ solve the integral equation in \eqref{eq:diffusion} with  intial condition $(Y_1(0),Y_2(0)) = (Y_1^{(0)},Y_2^{(0)})$. Theorem~\ref{thm:transient} says that for any $T> 0$ and $t \in [0,T]$, 
\begin{align}
\{\sqrt{n''}(X_1(t),X_2(t)),\ t \in [0,T]\} \Rightarrow \{(Y_1(t),Y_2(t)),\ t \in [0,T]\} \label{eq:li1} 
\end{align}
as $n \to \infty$, where the convergence is uniform over bounded intervals. Furthermore, since $\{(X_1(t),X_2(t)) \}$ was initialized according to the stationary distribution, 
\begin{align}
\lim_{n \to \infty}\sqrt{n''}(X_1(t),X_2(t)) \stackrel{d}{=} \lim_{n \to \infty}\sqrt{n''}(X_1(0),X_2(0))    =  (Y_1(0),Y_2(0)), \quad  t \in [0,T]. \label{eq:li2}
\end{align}
It follows from \eqref{eq:li1} and \eqref{eq:li2} that 
\begin{align*}
(Y_1(t),Y_2(t)) \stackrel{d}{=}(Y_1(0),Y_2(0)) =  (Y_1^{(0)},Y_2^{(0)}), \quad t \in [0,T],
\end{align*}
meaning $\{(Y_1(t),Y_2(t))\}$ is a stationary process, and must therefore be distributed according to its stationary distribution $(Y_1,Y_2)$. To conclude, we have shown that  $\sqrt{n''}(X_1,X_2)$ converges in distribution to $(Y_1,Y_2)$, which implies convergence of the original sequence $\sqrt{n}(X_1,X_2)$.
\end{proof}

\section{Technical lemmas: Section~\ref{sec:derbounds}. }
In this appendix we prove the key technical lemmas from Section~\ref{sec:derbounds}. Section~\ref{sec:lambertw} has the proofs for Lemmas~\ref{lem:gamma} and \ref{lem:tau} and Section~\ref{app:dertech} has the proof for Lemma~\ref{lem:derivs}. 
\subsection{Lemmas in Section~\ref{sec:tau}.} \label{sec:lambertw}
 A function known as the Lambert W function will play a central role here; the following discussion is based on \cite{CorlGonnHareJeffKnut1996}.   Define $W(x)$ as the solution to 
\begin{align}
x = W(x) e^{W(x)}, \quad x \in [-e^{-1},\infty). \label{eq:lambertdef}
\end{align} 
The function $W(x)$ exists and is known as the Lambert W function. Taking logarithms on both sides of \eqref{eq:lambertdef}, 
\begin{align}
W(x) = \log x - \log W(x). \label{eq:loglambert}
\end{align} 
As is depicted in the plot of $W(x)$ in Figure~\ref{fig:lambert}, $W(-e^{-1}) = -1$, $W(0) = 0$, and $W(x) \to \infty$ as $x \to \infty$. Furthermore, $W(x)$ is multi-valued for $x \in (-e^{-1},0)$, where it is separated into two `branches' $W_0(x)$ and $W_{-1}(x)$; the former is commonly called the principal branch.
\begin{figure}
\begin{center}
\includegraphics[scale=.4]{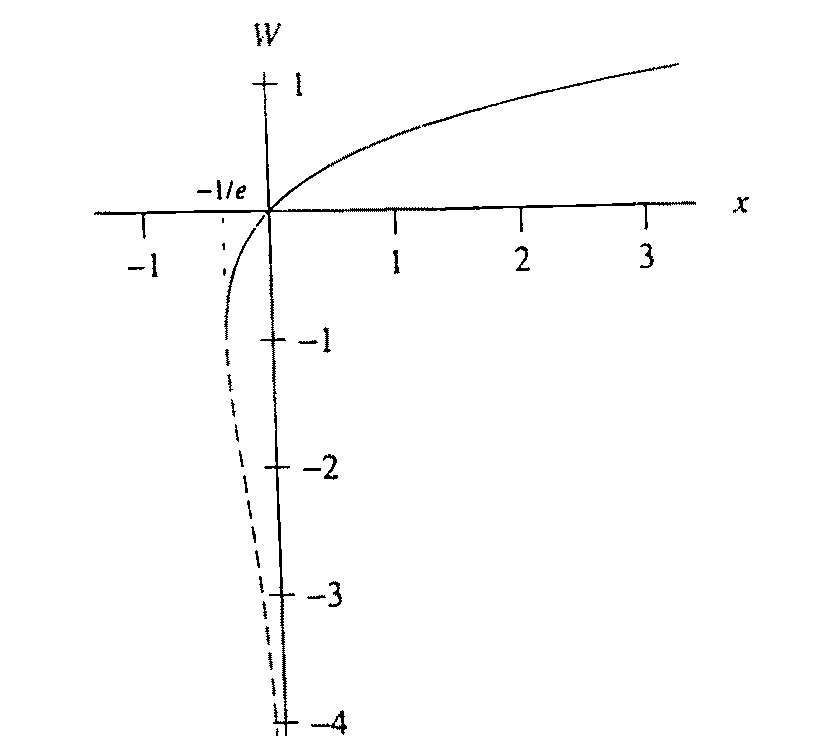}
\end{center}
\caption{A plot of $W(x)$ taken from \cite{CorlGonnHareJeffKnut1996}. For $x \leq 0$, the dashed line represents $W_{-1}(x)$ and the solid line represents $W_0(x)$.} \label{fig:lambert}
\end{figure}
 We will also need to use the fact that $W(x)$ and $W_0(x)$ are differentiable for $x > 0$ and $x \in (-e^{-1},0)$, respectively, and that 
\begin{align}
W'(x) = \frac{W(x)}{x(1+W(x))} > 0, \quad x \in (-e^{-1},0) \cup (0,\infty); \label{eq:wder}
\end{align}
 c.f.\ section 3 of \cite{CorlGonnHareJeffKnut1996}. Going forward, we adopt the convention of using $W(x)$ to mean $W_{0}(x)$ for negative values of $x$. A useful property of $W(x)$ is that
\begin{align}
x = W(xe^{x}), \quad x \geq -1.  \label{eq:lambertprop}
\end{align}
This can be seen by applying $W(x)$ to both sides of \eqref{eq:lambertdef} and using the fact that the range of $W(x)$ is $[-1,\infty]$ (again, we are using the convention $W(x) = W_0(x)$ for $x \in (-e^{-1},0)$).  Furthermore, $W(x)$ is invertible; indeed, $W^{-1}(x) = x e^{x}$ due to \eqref{eq:lambertprop}.

\subsubsection{Proving Lemma~\ref{lem:gamma}.} \label{app:gamma}
We first prove a technical result about $W(x)$, and then prove Lemma~\ref{lem:gamma}. 
\begin{lemma} \label{lem:uniqy}
Fix $\kappa \geq \beta$ and  $x_1 \leq 0$. The  equation 
\begin{align}
W\Big( \frac{-\beta/\sqrt{n}}{\nu} e^{\frac{-(x_1+\beta/\sqrt{n})}{\nu}} \Big) = -\frac{\beta}{\kappa} \label{eq:boundary}
\end{align}
has a unique solution $\nu^* \geq \kappa/\sqrt{n}$. Furthermore,
\begin{align}
\frac{d}{d \nu} \bigg(\frac{-\beta/\sqrt{n}}{\nu} e^{\frac{-(x_1+\beta/\sqrt{n})}{\nu}} \bigg) > 0, \quad \nu \geq \nu^*. \label{eq:posderiv}
\end{align}
\end{lemma}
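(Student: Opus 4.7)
The plan is to first use the invertibility of the principal branch of the Lambert $W$ function to turn the transcendental equation into an equivalent algebraic one, and then reduce the analysis to the monotonicity of a single one-variable function on a half-line.

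Since $\kappa \geq \beta > 0$, the right-hand side $-\beta/\kappa$ lies in $[-1,0)$, which is contained in the range of the principal branch $W_0$. Applying the inversion identity $W_0^{-1}(y) = y e^y$, equation \eqref{eq:boundary} becomes
\begin{align*}
\frac{\beta/\sqrt{n}}{\nu}\, e^{-(x_1+\beta/\sqrt{n})/\nu} \;=\; \frac{\beta}{\kappa}\, e^{-\beta/\kappa}.
\end{align*}
Substituting $\mu = \sqrt{n}\,\nu/\kappa$, setting $A = (x_1\sqrt{n}+\beta)/\kappa$ and $B = \beta/\kappa$, and taking logarithms then reduces the equation to
\begin{align*}
\Phi(\mu) \;:=\; \log\mu + A/\mu \;=\; B,
\end{align*}
with the constraint $\nu \geq \kappa/\sqrt{n}$ translating into $\mu \geq 1$.

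Under the hypotheses $x_1 \leq 0$ and $\kappa \geq \beta$ one has $A \leq B \leq 1$, so $\Phi(1) = A \leq B$ and $\Phi(\mu) \to \infty$ as $\mu \to \infty$. Since $\Phi'(\mu) = (\mu - A)/\mu^2 \geq 0$ throughout $[1,\infty)$, $\Phi$ is nondecreasing there (strictly increasing on $(\max\{A,1\},\infty)$), so the intermediate value theorem produces a unique $\mu^* \geq 1$ with $\Phi(\mu^*) = B$, giving the unique $\nu^* = \kappa\mu^*/\sqrt{n} \geq \kappa/\sqrt{n}$ in the statement. For the derivative in \eqref{eq:posderiv}, a routine product-and-chain-rule computation shows that it equals a positive multiple of $\nu - (x_1+\beta/\sqrt{n})$, equivalently of $\mu - A$; combined with $\mu \geq \mu^* \geq 1 \geq A$ this yields the claimed positivity.

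The main obstacle is the bookkeeping at the boundary. One must (a) verify that $-\beta/\kappa$ really does force the use of the principal branch (here the fact that $W_{-1}$'s range is $(-\infty,-1]$ makes $W_0$ the only possibility when $\kappa > \beta$, and at the meeting point $\kappa = \beta$ there is still a unique preimage), and (b) handle the degenerate corner $A = 1$ (namely $x_1 = 0$ and $\kappa = \beta$), where $\mu^* = A = 1$ and the derivative in \eqref{eq:posderiv} vanishes exactly at $\nu = \nu^*$ while remaining strictly positive for all $\nu > \nu^*$. These are routine but deserve explicit mention because the strict inequality \eqref{eq:posderiv} is otherwise not quite sharp at the endpoint.
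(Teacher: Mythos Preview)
Your proof is correct and follows the same core strategy as the paper: invert the principal branch via $W^{-1}(y)=ye^{y}$ to reduce \eqref{eq:boundary} to an algebraic equation, then obtain existence and uniqueness from a monotonicity argument on the resulting one-variable function. The paper works directly with $f(\nu)=\frac{-\beta/\sqrt{n}}{\nu}e^{-(x_1+\beta/\sqrt{n})/\nu}$ and splits into cases on the sign of $x_1+\beta/\sqrt{n}$; your logarithm-and-substitution step, collapsing the problem to $\Phi(\mu)=\log\mu+A/\mu$ with $A\leq B\leq 1$, is a mild streamlining that eliminates that case split, and your explicit flagging of the degenerate corner $A=1$ (i.e.\ $x_1=0,\ \kappa=\beta$), where the strict inequality in \eqref{eq:posderiv} fails at the single point $\nu=\nu^*$, is actually more careful than the paper, which passes over this boundary case.
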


\begin{proof}[Proof of Lemma~\ref{lem:uniqy}  ] 
Let 
\begin{align*}
f(\nu) = \frac{-\beta/\sqrt{n}}{\nu} e^{\frac{-(x_1+\beta/\sqrt{n})}{\nu}}.
\end{align*}
Since $\kappa \geq \beta$ and $W(x)$ is one-to-one (recall our convention that $W(x) = W_0(x)$  for $x \leq 0$), \eqref{eq:lambertprop} implies that \eqref{eq:boundary} is satisfied if and only if 
\begin{align}
f(\nu) = -\frac{\beta}{\kappa}e^{-\beta/\kappa}. \label{eq:equivalent}
\end{align}
If $x_1 = 0$, then  $\nu = \kappa/\sqrt{n}$ is the unique solution, and so we assume that $x_1 < 0$ and argue that \eqref{eq:equivalent} has a unique solution.  Since the domain of $W(x)$ is $[-e^{-1},\infty)$, we can only consider $\nu$ large enough such that $f(\nu) \geq -1$.
Observe that 
\begin{align*}
f(\kappa/\sqrt{n}) = -\frac{\beta }{\kappa} e^{-\beta/\kappa} e^{-x_1 \sqrt{n}/\kappa} \leq -\frac{\beta }{\kappa} e^{-\beta/\kappa}.
\end{align*} 
  Differentiating, 
\begin{align*}
f'(\nu) =&\ \frac{\beta/\sqrt{n}}{\nu^2}e^{\frac{-(x_1+\beta/\sqrt{n})}{\nu}} - \frac{\beta/\sqrt{n}}{\nu}e^{\frac{-(x_1+\beta/\sqrt{n})}{\nu}}\Big( \frac{x_1+\beta/\sqrt{n}}{\nu^2}\Big)\\
=&\ \frac{\beta/\sqrt{n}}{\nu^2}e^{\frac{-(x_1+\beta/\sqrt{n})}{\nu}}\Big( 1 -  \frac{x_1+\beta/\sqrt{n}}{\nu }\Big).
\end{align*} 
We know that $f(\nu) \to 0$ as $\nu \to \infty$. 

\textbf{Case 1:} $x_1 + \beta/\sqrt{n} \leq 0$. In this case $f'(\nu) > 0$ for all $\nu > 0$, which implies that there exists a unique $\nu^{*}$ such that \eqref{eq:equivalent} is satisfied.

\textbf{Case 2:} $x_1 + \beta/\sqrt{n} \geq 0$. The form of $f'(\nu)$ tells us that $f(\nu)$ is decreasing on $(0,x_1+\beta/\sqrt{n})$, but starts increasing after that. This again implies that a unique $\nu^{*}$ exists, and that $\nu^{*} \geq x_1+\beta/\sqrt{n}$, implying $f'(\nu) > 0$ for all $\nu \geq \nu^*$.

\end{proof}

\begin{proof}[Proof of Lemma~\ref{lem:gamma}]
Fix $\kappa \geq \beta$ and  $x_1 \leq 0$. We begin by showing that the system \eqref{eq:nonlin} has a unique solution. The first step is to write $\eta$ in terms of $\nu$.  We rearrange 
\begin{align}
-\beta/\sqrt{n} + (x_1 + \beta/\sqrt{n}) e^{-\eta} +  \eta \nu e^{-\eta} = 0 \label{eq:step1}
\end{align}
into 
\begin{align*}
\frac{ -\beta/\sqrt{n}}{\nu}  =&\  \frac{-(x_1+\beta/\sqrt{n})}{\nu} e^{-\eta} - \eta e^{-\eta} =\Big( - \frac{(x_1+\beta/\sqrt{n})}{\nu}  - \eta \Big)e^{-\eta } ,
\end{align*}
or
\begin{align}
 \Big( - \frac{(x_1+\beta/\sqrt{n})}{\nu}  - \eta \Big)e^{\frac{-(x_1+\beta/\sqrt{n})}{\nu} - \eta } =&\ \frac{-\beta/\sqrt{n}}{\nu} e^{\frac{-(x_1+\beta/\sqrt{n})}{\nu} }.   \label{eq:preW}
\end{align}
Observe that the left hand side of \eqref{eq:preW} is in the form $-xe^{-x}$, and so must lie in $[-e^{-1},\infty)$. Therefore, existence of a solution to \eqref{eq:nonlin} imposes a natural constraint on $\nu$ that the right hand side above must lie in $[-e^{-1},\infty)$. Assuming this is the case, we apply $W(x)$ to both sides of \eqref{eq:preW} and using \eqref{eq:lambertprop}, we arrive at 
\begin{align}
\eta =&\  - \frac{(x_1+\beta/\sqrt{n})}{\nu} - W\bigg(\frac{-\beta/\sqrt{n}}{\nu} e^{\frac{-(x_1+\beta/\sqrt{n})}{\nu} }\bigg). \label{eq:eta}
\end{align}
Since the Lambert W function is multivalued for $x \leq 0$, the above equation tells us that given $\nu$, there can be two potential choices for $\eta$. Plugging the above form of $\eta$ back into \eqref{eq:step1}, we see that 
\begin{align*}
-\beta/\sqrt{n}  - W\bigg(\frac{-\beta/\sqrt{n}}{\nu} e^{\frac{-(x_1+\beta/\sqrt{n})}{\nu} }\bigg)\nu e^{-\eta} = 0,
\end{align*}
which, after using the fact that $\nu e^{-\eta} = \kappa/\sqrt{n}$, becomes
\begin{align}
W\bigg(\frac{-\beta/\sqrt{n}}{\nu} e^{\frac{-(x_1+\beta/\sqrt{n})}{\nu} }\bigg) = -\frac{\beta}{\kappa}. \label{eq:step2}
\end{align}
Lemma~\ref{lem:uniqy} tells us that \eqref{eq:step2} does indeed have a unique solution $\nu^*(x_1)$. From \eqref{eq:eta} we know $\eta$ can have up to two values, but we narrow this number down to one using the fact that $\nu e^{-\eta} = \kappa/\sqrt{n}$. As an aside, it can be verified that $\nu^*(0) = \kappa/\sqrt{n}$, and $\eta^*(0) = 0$.

We now prove the second claim in the lemma that $\gamma^{(\kappa)}(x_1) \subset \Gamma^{(\kappa)}$ for every $x_1 \leq 0$. Recall that 
\begin{align*}
 \gamma^{(\kappa)}(x_1) =  \Big\{ \big(-\beta/\sqrt{n} + (x_1 + \beta/\sqrt{n}) e^{-t} + t \nu^*(x_1) e^{-t}, \nu^*(x_1) e^{-t} \big) \ \Big| \ t \in [0,\eta^*(x_1)] \Big\}.
\end{align*}
Given $x_1 \leq 0$ and $t \in [0,\eta^*(x_1)]$, define 
\begin{align*}
\bar x_1 = -\beta/\sqrt{n} + (x_1 + \beta/\sqrt{n}) e^{-t} + t \nu^*(x_1) e^{-t}.
\end{align*}
By uniqueness of $\nu^*(\bar x_1)$ and $\eta^*(\bar x_1)$, it suffices to show that the pair
\begin{align*}
\nu = \nu^*(x_1) e^{-t}, \quad \eta = \eta^*(x_1) - t
\end{align*}
solves \eqref{eq:nonlin} with $\bar x_1$ replacing $x_1$ there. Indeed, 
\begin{align*}
\nu^*(x_1) e^{-t} e^{-(\eta^*(x_1) - t)} = \nu^*(x_1) e^{-\eta^*(x_1)} = \kappa/\sqrt{n},
\end{align*}
and 
\begin{align*}
& -\beta/\sqrt{n} + (\bar x_1 + \beta/\sqrt{n}) e^{-(\eta^*(x_1) - t)} + (\eta^*(x_1) - t) \nu^*(x_1) e^{-t} e^{-(\eta^*(x_1) - t)}\\
=&\ -\beta/\sqrt{n} +(\bar x_1 + \beta/\sqrt{n}) e^{-(\eta^*(x_1) - t)} + (\eta^*(x_1) - t) \nu^*(x_1) e^{-\eta^*(x_1)}\\
=&\ -\beta/\sqrt{n} + (  (x_1 + \beta/\sqrt{n}) e^{-t} + t \nu^*(x_1) e^{-t} ) e^{-(\eta^*(x_1) - t)}  + (\eta^*(x_1) - t) \nu^*(x_1) e^{-\eta^*(x_1)}\\
=&\ -\beta/\sqrt{n} + (x_1 + \beta/\sqrt{n}) e^{-\eta^*(x_1) } + \eta^*(x_1) \nu^*(x_1) e^{-\eta^*(x_1)} = 0.
\end{align*}
\end{proof}

\subsubsection{Proving Lemma~\ref{lem:tau}. } \label{app:tau}

\begin{proof}[Proof of Lemma~\ref{lem:tau} ] 
Fix $\kappa \geq \beta$ and $x \in \Omega$. Assume that $x \geq \Gamma^{(\kappa)}$, which by definition in \eqref{eq:xgeqgamma} implies that there exists some $\delta \geq 0$ such that 
\begin{align*}
(x_1, x_2 - \delta) \in \Gamma^{(\kappa)}.
\end{align*}
Note that if $x > \Gamma^{(\kappa)}$, then $\delta > 0$. 

Let us prove \eqref{eq:gammaprop}. Consider the equation
\begin{align}
\beta/\sqrt{n} - (x_1 + \beta/\sqrt{n}) e^{-\eta} - \eta x_2 e^{-\eta} = 0. \label{eq:interm}
\end{align}
We first argue that 
\begin{align}
\eta =&\  - \frac{(x_1+\beta/\sqrt{n})}{x_2} - W\bigg(\frac{-\beta/\sqrt{n}}{x_2} e^{\frac{-(x_1+\beta/\sqrt{n})}{x_2} }\bigg). \label{eq:etaaltern}
\end{align}
Starting with \eqref{eq:interm}, we can replicate the steps used to get \eqref{eq:preW} to see that \eqref{eq:interm} is equivalent to 
\begin{align}
 \Big( - \frac{(x_1+\beta/\sqrt{n})}{x_2}  - \eta \Big)e^{\frac{-(x_1+\beta/\sqrt{n})}{x_2} - \eta } =&\ \frac{-\beta/\sqrt{n}}{x_2} e^{\frac{-(x_1+\beta/\sqrt{n})}{x_2} }. \label{eq:preW2}
\end{align}
Let us assume that $x \geq \Gamma^{(\kappa)}$ implies that the right hand side of \eqref{eq:preW2} is in the interval $[-e^{-1},0)$; we postpone the verification of this claim for now. We can apply $W(\cdot)$ to both sides of \eqref{eq:preW2} and use \eqref{eq:lambertprop}  to conclude \eqref{eq:etaaltern}. Plugging \eqref{eq:etaaltern} back into \eqref{eq:interm},
\begin{align}
-\beta/\sqrt{n}    - W\bigg(\frac{-\beta/\sqrt{n}}{x_2} e^{\frac{-(x_1+\beta/\sqrt{n})}{x_2} }\bigg)x_2 e^{-\eta} = 0, \label{eq:etaaltern2}
\end{align}
or 
\begin{align*}
x_2 e^{-\eta} = \frac{-\beta/\sqrt{n}}{W\bigg(\frac{-\beta/\sqrt{n}}{x_2} e^{\frac{-(x_1+\beta/\sqrt{n})}{x_2} }\bigg)} \geq \frac{-\beta/\sqrt{n}}{W\bigg(\frac{-\beta/\sqrt{n}}{x_2-\delta} e^{\frac{-(x_1+\beta/\sqrt{n})}{x_2-\delta} }\bigg)} = \kappa/\sqrt{n}.
\end{align*}
Observe that the inequality above is strict if $x > \Gamma^{(\kappa)}$, and that it becomes an equality if $\delta = 0$ (which means that $x \in \Gamma^{(\kappa)}$).   

To conclude the proof of \eqref{eq:gammaprop}, it remains verify our assumption that $x \geq \Gamma^{(\kappa)}$ implies that the right hand side of \eqref{eq:preW2} is in the interval $[-e^{-1},0)$.  Equation \eqref{eq:step2} in the proof of Lemma~\ref{lem:gamma} tells us that $(x_1, x_2 - \delta) \in \Gamma^{(\kappa)}$ implies 
\begin{align*}
W\bigg(\frac{-\beta/\sqrt{n}}{x_2 - \delta} e^{\frac{-(x_1+\beta/\sqrt{n})}{x_2 - \delta} }\bigg) = -\frac{\beta}{\kappa},
\end{align*}
or that 
\begin{align}
\frac{-\beta/\sqrt{n}}{x_2 - \delta} e^{\frac{-(x_1+\beta/\sqrt{n})}{x_2 - \delta} } = W^{-1}(-\beta/\kappa) \geq W^{-1}(-1) = -e^{-1}, \label{eq:interm2}
\end{align}
where in the inequality above we used the fact that $\kappa \geq \beta$ and that $W^{-1}(\cdot)$ is an increasing function.  Now from \eqref{eq:posderiv} we know that 
\begin{align*}
\frac{d}{d \nu} \bigg(\frac{-\beta/\sqrt{n}}{\nu} e^{\frac{-(x_1+\beta/\sqrt{n})}{\nu}} \bigg) > 0, \quad \nu \geq x_2 - \delta,
\end{align*}
which implies 
\begin{align*}
\frac{-\beta/\sqrt{n}}{x_2} e^{\frac{-(x_1+\beta/\sqrt{n})}{x_2} } \geq \frac{-\beta/\sqrt{n}}{x_2 - \delta} e^{\frac{-(x_1+\beta/\sqrt{n})}{x_2 - \delta} } = W^{-1}(-\beta/\kappa) \geq -e^{-1}.
\end{align*}
This concludes the proof of \eqref{eq:gammaprop}.

 We now address the differentiability of $\tau(x)$ to prove \eqref{eq:tauder}. Fixing $\kappa > \beta$ and $x \geq \Gamma^{(\kappa)}$, we see from \eqref{eq:etaaltern} that
\begin{align*}
\tau(x) = - \frac{(x_1+\beta/\sqrt{n})}{x_2} - W\bigg(\frac{-\beta/\sqrt{n}}{x_2} e^{\frac{-(x_1+\beta/\sqrt{n})}{x_2} }\bigg).
\end{align*}
We know that $W'(u)$ exists for $u \in (-e^{-1},0)$,  and that
\begin{align*}
\frac{-\beta/\sqrt{n}}{x_2} e^{\frac{-(x_1+\beta/\sqrt{n})}{x_2} } > -e^{-1}, \quad x \geq \Gamma^{(\kappa)},
\end{align*}
which can be derived from \eqref{eq:interm2}. Therefore, $\tau(x)$ is differentiable at all points $x \geq \Gamma^{(\kappa)}$ with $x_1 < 0$. Only the one-sided derivative exists for $x \in \{x_1 = 0,\ x \geq \Gamma^{(\kappa)}\}$, i.e.\ those $x$ that are on the vertical axis. To characterize the derivatives of $\tau(x)$, let us use the form
\begin{align*}
\tau(x) =  - \frac{(x_1+\beta/\sqrt{n})}{x_2} + \frac{\beta/\sqrt{n}}{x_2 e^{-\tau(x)}},
\end{align*}
which is implied by \eqref{eq:etaaltern} and \eqref{eq:etaaltern2}.
Differentiating gives us 
\begin{align}
\tau_1(x) =&\ -\frac{1}{x_2} \Big( 1 -  \frac{\beta/\sqrt{n}}{x_2 e^{-\tau(x)}} \Big)^{-1} = -\frac{1}{x_2} \frac{x_2 e^{-\tau(x)}}{x_2 e^{-\tau(x)} - \beta/\sqrt{n}} = - \frac{ e^{-\tau(x)}}{x_2 e^{-\tau(x)} - \beta/\sqrt{n}}, \label{eq:exampletau1}
\end{align}
where $\tau_1(x)$ is understood to be the left derivative when $x_1 = 0$. Note that $x \geq \Gamma^{(\kappa)}$ means the denominator in $\tau_1(x)$ is strictly positive due to our recently proved \eqref{eq:gammaprop}. Furthermore, 
\begin{align*}
\tau_2(x) = \frac{x_1 + \beta/\sqrt{n}}{x_2^2} - \frac{\beta/\sqrt{n}}{x_2^2 e^{-\tau(x)}} + \tau_2(x) \frac{\beta/\sqrt{n}}{x_2 e^{-\tau(x)}} = -\frac{1}{x_2} \tau(x) + \tau_2(x)\frac{\beta/\sqrt{n}}{x_2 e^{-\tau(x)}} ,
\end{align*}
and so 
\begin{align}
\tau_2(x) = -\frac{1}{x_2} \tau(x) \Big( 1 -  \frac{\beta/\sqrt{n}}{x_2 e^{-\tau(x)}} \Big)^{-1} = \tau_1(x) \tau(x). \label{eq:exampletau2}
\end{align}
This proves \eqref{eq:tauder}, and we now prove the last claim of the lemma. 
Fix $x = (x_1,x_2)$ and assume that $x \geq \Gamma^{(\kappa_2)}$. By \eqref{eq:gammaprop}, we know that $x_2 e^{-\tau(x)} \geq \kappa_2/\sqrt{n} > \kappa_1/\sqrt{n}$. Now
\begin{align*}
\frac{d}{d x_2} x_2 e^{-\tau(x)} =  e^{-\tau(x)} - \tau_2(x) x_2  e^{-\tau(x)} > 0, \quad x \geq \Gamma^{(\kappa_2)},
\end{align*}
where the inequality follows from the form of $\tau_2(x)$ in \eqref{eq:tauder}. Therefore,
\begin{align*}
(x_2 + \varepsilon) e^{-\tau(x_1, x_2+\varepsilon)} \geq \kappa_2/\sqrt{n} > \kappa_1/\sqrt{n}, \quad \varepsilon \geq 0.
\end{align*}
In other words, $(x_1,x_2+\varepsilon) \not \in \Gamma^{(\kappa_1)}$ for all $\varepsilon \geq 0$ by definition of $\Gamma^{(\kappa_1)}$ in Lemma~\ref{lem:gamma}.  However, also by Lemma~\ref{lem:gamma}, there must exist some $\bar x_2 \geq 0$ such that $(x_1,\bar x_2) \in \Gamma^{(\kappa_1)}$, which means that $\bar x_2 = x_2 - \bar \varepsilon$ for some $\bar \varepsilon > 0$, or that $x > \Gamma^{(\kappa_1)}$.
\end{proof}

\subsection{Lemmas in Section~\ref{sec:candidate}. } \label{app:dertech}
\begin{proof}[Proof of Lemma~\ref{lem:derivs}]
The proof proceeds as follows. We first show that  $f_1(\cdot,x_2), f_{2}(x_1,\cdot)$ are absolutely continuous for  all $x \in \Omega$. We then verify that $f^*(x)$ satisfies the PDE \eqref{eq:poisson1} with the boundary condition \eqref{eq:poisson2}. Lastly, we verify the bounds on the second derivatives of $f^*(x)$.

\subsubsection{First Derivatives}
Recall the definition of $f^*(x)$:
\begin{align*}
f^*(x) = 
\begin{cases}
&0, \hfill x_2 \in [0,\kappa/\sqrt{n}],\\
&x_2- \frac{\kappa}{ \sqrt{n}}  - \frac{\kappa}{\sqrt{n}} \log (\sqrt{n}x_2/\kappa),  \hfill  x \leq \Gamma^{(\kappa)} \text{ and } x_2 \geq \kappa/\sqrt{n}, \\
&x_2(1 - e^{-\tau(x)}) - \frac{\kappa}{\sqrt{n}} \tau(x) +   \frac{1}{2} \frac{\sqrt{n}}{\beta} (x_2 e^{-\tau(x)} - \kappa/\sqrt{n})^2,  \quad  x \geq \Gamma^{(\kappa)}.
\end{cases}
\end{align*}
Let us differentiate $f^*(x)$ in the region $x \leq \Gamma^{(\kappa)}$ and $x_2 \geq \kappa/\sqrt{n}$:
\begin{align}
f_{1}^*(x) = 0, \quad f_2^*(x) = 1 - \frac{\kappa}{x_2 \sqrt{n}}, \quad f_{22}^*(x) = \frac{1}{x_2^2} \frac{\kappa}{\sqrt{n}}, \quad x \leq \Gamma^{(\kappa)} \text{ and } x_2 \geq \kappa/\sqrt{n}. \label{eq:d0}
\end{align}
Next, we differentiate $f^*(x)$ when $x \geq \Gamma^{(\kappa)}$:
\begin{align}
f_1^*(x) =&\ \tau_1(x) x_2 e^{-\tau(x)} - \frac{\kappa}{\sqrt{n}} \tau_1(x) + \frac{\sqrt{n}}{\beta}  (x_2 e^{-\tau(x)} - \kappa/\sqrt{n}) (-x_2 \tau_1(x) e^{-\tau(x)} ) \notag \\
=&\ -\tau_1(x)\Big( -x_2 e^{-\tau(x)} + \frac{\kappa}{\sqrt{n}} +x_2^2   e^{-2\tau(x)} \frac{\sqrt{n}}{\beta}   -x_2   e^{-\tau(x)} \frac{\kappa}{\beta}  \Big) \notag \\
=&\ -\tau_1(x)\frac{\sqrt{n}}{\beta} \Big(x_2^2   e^{-2\tau(x)}   -\frac{\beta}{\sqrt{n}} x_2 e^{-\tau(x)}   -x_2   e^{-\tau(x)} \frac{\kappa}{\sqrt{n}} + \frac{\kappa \beta}{n}   \Big) \notag \\
=&\ -\tau_1(x)\frac{\sqrt{n}}{\beta}  \big(x_2  e^{-\tau(x)}   -\beta/\sqrt{n} \big)\big(   x_2  e^{-\tau(x)}   -\kappa/\sqrt{n}  \big) \notag \\
=&\ \frac{\sqrt{n}}{\beta} e^{-\tau(x)} \big(   x_2  e^{-\tau(x)}   -\kappa/\sqrt{n}  \big), \label{eq:d1}
\end{align}
where in the last equality we used \eqref{eq:tauder}. Now we will prove that 
\begin{align}
f_2^*(x) =  1 - \frac{1}{x_2} \frac{\kappa}{\sqrt{n}}    + \frac{\sqrt{n}}{\beta}\big(   x_2  e^{-\tau(x)} - \kappa/\sqrt{n}\big) \Big( \frac{x_2  e^{-\tau(x)}   -\beta/\sqrt{n}}{x_2} + \tau e^{-\tau(x)} \Big), \quad x \geq \Gamma^{(\kappa)}. \label{eq:d2}
\end{align}
We begin by differentiating $f^*(x)$:
\begin{align*}
f_2^*(x) =&\ (1 - e^{-\tau(x)})+ \tau_2(x) x_2 e^{-\tau(x)}  - \frac{\kappa}{\sqrt{n}} \tau_{2}(x) \\
&+    \frac{\sqrt{n}}{\beta} (x_2 e^{-\tau(x)} - \kappa/\sqrt{n}) (e^{-\tau(x)} - \tau_{2}(x) x_2 e^{-\tau(x)}) \\
=&\ (1 - e^{-\tau(x)})+ \tau_{2}(x) x_2 e^{-\tau(x)}  - \frac{\kappa}{\sqrt{n}} \tau_{2}(x) \\
&+    \frac{\sqrt{n}}{\beta} \big(x_2 e^{-2\tau(x)} - \tau_{2}(x) x_2^2 e^{-2\tau(x)} - e^{-\tau(x)}\kappa/\sqrt{n}  + \tau_{2}(x) x_2 e^{-\tau(x)}\kappa/\sqrt{n}\big),
\end{align*}
which equals 
\begin{align}
& (1 - e^{-\tau(x)}) + \frac{\sqrt{n}}{\beta}x_2 e^{-2\tau(x)} - e^{-\tau(x)}\frac{\kappa}{\beta} \label{eq:m1} \\
&- \tau_{2}(x) \frac{\sqrt{n}}{\beta} \big( x_2^2 e^{-2\tau(x)} - \frac{\beta}{\sqrt{n}} x_2 e^{-\tau(x)}   - \frac{\kappa}{\sqrt{n}} x_2 e^{-\tau(x)} + \frac{\kappa \beta}{n} \big). \label{eq:m2}
\end{align}
We focus on \eqref{eq:m1}, which equals
\begin{align*}
& 1 + \frac{\sqrt{n}}{\beta} \frac{1}{x_2}\Big(x_2^2 e^{-2\tau(x)} - x_2e^{-\tau(x)}\frac{\kappa}{\sqrt{n}}- x_2e^{-\tau(x)}\frac{\beta}{\sqrt{n}} \Big)  \\
=&\ 1 - \frac{1}{x_2} \frac{\kappa}{\sqrt{n}} + \frac{\sqrt{n}}{\beta} \frac{1}{x_2}\Big(x_2^2 e^{-2\tau(x)} - x_2e^{-\tau(x)}\frac{\kappa}{\sqrt{n}}- x_2e^{-\tau(x)}\frac{\beta}{\sqrt{n}} + \frac{\kappa \beta}{n} \Big) \\
=&\ 1 - \frac{1}{x_2} \frac{\kappa}{\sqrt{n}} + \frac{\sqrt{n}}{\beta} \frac{1}{x_2}\big(x_2  e^{-\tau(x)}   -\beta/\sqrt{n} \big)\big(   x_2  e^{-\tau(x)} - \kappa/\sqrt{n}\big).  
\end{align*}
With the help of \eqref{eq:tauder}, we see that \eqref{eq:m2}  equals 
\begin{align*}
& - \tau_{2}(x) \frac{\sqrt{n}}{\beta} \big(x_2  e^{-\tau(x)}   -\beta/\sqrt{n} \big)\big(   x_2  e^{-\tau(x)} - \kappa/\sqrt{n}\big)= \frac{\sqrt{n}}{\beta}\tau e^{-\tau(x)}  \big(   x_2  e^{-\tau(x)} - \kappa/\sqrt{n}\big).
\end{align*}
Therefore, for all $x \geq \Gamma^{(\kappa)}$,
\begin{align*}
f_2^*(x) =  1 - \frac{1}{x_2} \frac{\kappa}{\sqrt{n}}    + \frac{\sqrt{n}}{\beta}\big(   x_2  e^{-\tau(x)} - \kappa/\sqrt{n}\big) \Big( \frac{x_2  e^{-\tau(x)}   -\beta/\sqrt{n}}{x_2} + \tau e^{-\tau(x)} \Big).
\end{align*}

We now verify continuity of the partial derivatives of $f^*(x)$. 
Recall that the support of $f^*(x)$ is naturally partitioned into three subdomains:
\begin{align}
\{x_2 \in [0,\kappa/\sqrt{n}]\}, \quad  \{x_2 \leq \Gamma^{(\kappa)},\ x_2 \geq \kappa/\sqrt{n} \}, \quad \text{ and } \quad \{x_2 \geq \Gamma^{(\kappa)} \}. \label{eq:subdomain}
\end{align}
Continuity of the partial derivatives on the interiors of these subdomains follows from the continuity of $\tau(x)$, and it remains to verify continuity on the intersections, which are 
\begin{align*}
\{x_2 \in [0,\kappa/\sqrt{n}]\} \cap \{x \leq \Gamma^{(\kappa)},\ x_2 \geq \kappa/\sqrt{n} \} =&\ \{ x_2 = \kappa/\sqrt{n}\},\\
\{x \leq \Gamma^{(\kappa)},\ x_2 \geq \kappa/\sqrt{n} \} \cap  \{x \geq \Gamma^{(\kappa)} \} =&\ \{x \in \Gamma^{(\kappa)}\},\\
\{x_2 \in [0,\kappa/\sqrt{n}]\} \cap  \{x \geq \Gamma^{(\kappa)} \} =&\ \{(0,\kappa/\sqrt{n})\}\subset \Gamma^{(\kappa)}.
\end{align*}
The fact that $\{(0,\kappa/\sqrt{n})\}\subset \Gamma^{(\kappa)}$ follows from the definition of $\Gamma^{(\kappa)}$ in Lemma~\ref{lem:gamma}.
 When $x_2 = \kappa/\sqrt{n}$, we see from \eqref{eq:d0} that $f_1^*(x) = f_2^*(x) = 0$, which confirms continuity on $\{ x_2 = \kappa/\sqrt{n}\}$. Now by definition, $x \in \Gamma^{(\kappa)}$ implies that $x_2 e^{-\tau(x)} = \kappa/\sqrt{n}$, from which we see that \eqref{eq:d0} coincides with \eqref{eq:d1}-\eqref{eq:d2}. 
 
Thus we have proved continuity of the derivatives of $f^*(x)$ on $\Omega$. It remains to prove that  $f_1^*(\cdot,x_2), f_{2}^*(x_1,\cdot)$ are absolutely continuous for  all $x \in \Omega$. 

Fix $x_2 \geq 0$. We will show that $f_1^*(\cdot,x_2)$ is differentiable almost everywhere. From the form of $f^*(x)$ and \eqref{eq:d0}, we see that $\frac{d}{d x_1} f_{1}^*(x)= 0$ on the set $\{x_1 \leq 0 :\ x < \Gamma^{(\kappa)}\}$. Furthermore, from \eqref{eq:d1} we know that $\frac{d}{d x_1}f_1^*(x)$ exists on the interior of $\{x_1 \leq 0 :\ x \geq \Gamma^{(\kappa)}\}$ (because $\tau(x)$ is differentiable). It may be that $\frac{d}{d x_1}f_{1}^*(x)$ does not exist on the set $\{x_1:\ x \in \Gamma^{(\kappa)}\}$. However, Lemma~\ref{lem:gamma} (and in particular the form of $\gamma^{(\kappa)}(x_1)$) tells us that the curve $\Gamma^{(\kappa)}$ contains no horizontal segments because the second coordinate of $\gamma^{(\kappa)}(x_1)$ is always decreasing with $t$. Therefore, the set $\{x_1:\ x \in \Gamma^{(\kappa)}\}$ contains at most one point, and for each fixed $x_2 \geq 0$, the function $f_1^*(\cdot, x_2)$ is differentiable almost everywhere, and therefore absolutely continuous.

A similar argument holds for showing $f^*_2(x_1,\cdot)$ is absolutely continuous. For fixed $x_1 \leq 0$, the function $f^*_2(x_1,\cdot)$ is differentiable everywhere except the point $x_2 = \kappa/\sqrt{n}$ and the set $\{x_2:\ x \in \Gamma^{(\kappa)}\}$. However, the latter contains only a single point because given $x_1$, Lemma~\ref{lem:gamma} (namely, uniqueness of $\nu^*(x_1)$) tells us the set $\{x_2:\ x \in \Gamma^{(\kappa)}\}$ contains only a single point (i.e. $\Gamma^{(\kappa)}$ contains no vertical lines). Therefore, $f^*_2(x_1,\cdot)$ is differentiable almost everywhere and is therefore absolutely continuous.

\subsubsection{Satisfying the PDE}
We now verify that $f^*(x)$ satisfies the PDE \eqref{eq:poisson1} and the boundary condition \eqref{eq:poisson2}. For $x_2 \in [0,\kappa/\sqrt{n}]$, 
\begin{align*}
L f^*(x) = 0 = -\big((x_2 - \kappa/\sqrt{n}) \vee 0\big),
\end{align*}
and 
\begin{align*}
f_1^*(0,x_2) = f_2^*(0,x_2) = 0,
\end{align*}
and so both \eqref{eq:poisson1} and \eqref{eq:poisson2} are trivially satisfied. When $x \leq \Gamma^{(\kappa)}$ and $x_2 \geq \kappa/\sqrt{n}$,
\begin{align*}
L f^*(x) = (-x_1+x_2-\beta/\sqrt{n}) f^*_1(x) - x_2 f^*_2(x) =  -x_2\big(1 - \frac{\kappa}{x_2 \sqrt{n}}\big) = -\big((x_2 - \kappa/\sqrt{n}) \vee 0\big),
\end{align*}
and the only intersection of $x \leq \Gamma^{(\kappa)}$ and $x_2 \geq \kappa/\sqrt{n}$ with the vertical axis is the point $(0,\kappa/\sqrt{n})$, meaning
\begin{align*}
f^*_1(0,x_2) =&\ 0 \\
f_2^*(0,x_2) =&\ 1 - \frac{\kappa}{x_2 \sqrt{n}} = 0.
\end{align*}
The last case is $x \geq \Gamma^{(\kappa)}$. Using \eqref{eq:d1} and \eqref{eq:d2}:
\begin{align*}
L f^*(x) =&\ (-x_1+x_2-\beta/\sqrt{n}) f^*_1(x) - x_2 f^*_2(x) \\
=&\ (-x_1+x_2-\beta/\sqrt{n})\frac{\sqrt{n}}{\beta} e^{-\tau(x)} \big(   x_2  e^{-\tau(x)}   -\kappa/\sqrt{n}  \big) \\
&- x_2 \Big(1 - \frac{1}{x_2} \frac{\kappa}{\sqrt{n}}    + \frac{\sqrt{n}}{\beta}\big(   x_2  e^{-\tau(x)} - \kappa/\sqrt{n}\big) \Big( \frac{x_2  e^{-\tau(x)}   -\beta/\sqrt{n}}{x_2} + \tau e^{-\tau(x)} \Big)\Big)\\
=&\ -\big(x_2 - \frac{\kappa}{\sqrt{n}}\big) + \frac{\sqrt{n}}{\beta}\big(   x_2  e^{-\tau(x)} - \kappa/\sqrt{n}\big)\Big( \beta/\sqrt{n} + x_2 \tau(x) e^{-\tau(x)} + (-x_1  - \beta/\sqrt{n})e^{-\tau(x)} \Big)\\
=&\ -\big((x_2 - \kappa/\sqrt{n}) \vee 0\big),
\end{align*}
where the last equality follows from the definition of $\tau(x)$ in Lemma~\ref{lem:tau}. Verifying the boundary condition:
\begin{align*}
f_1^*(0,x_2) = \frac{\sqrt{n}}{\beta} e^{-\tau(0,x_2)} \big(   x_2  e^{-\tau(0,x_2)}   -\kappa/\sqrt{n}  \big) = \frac{\sqrt{n}}{\beta}  \big(   x_2     -\kappa/\sqrt{n}  \big),
\end{align*}
and 
\begin{align*}
f_2^*(0,x_2) =&\ 1 - \frac{1}{x_2} \frac{\kappa}{\sqrt{n}}    + \frac{\sqrt{n}}{\beta}\big(   x_2  e^{-\tau(0,x_2)} - \kappa/\sqrt{n}\big) \Big( \frac{x_2  e^{-\tau(0,x_2)}   -\beta/\sqrt{n}}{x_2} + \tau(0,x_2) e^{-\tau(0,x_2)} \Big) \\
=&\  1 - \frac{1}{x_2} \frac{\kappa}{\sqrt{n}}    + \frac{\sqrt{n}}{\beta}\frac{1}{x_2}\big(   x_2   - \kappa/\sqrt{n}\big) \Big( x_2  -\beta/\sqrt{n}  \Big) \\
=&\  1 - \frac{1}{x_2} \frac{\kappa}{\sqrt{n}}    + \frac{\sqrt{n}}{\beta}\frac{1}{x_2}\big(   x_2^2 - x_2\kappa/\sqrt{n} - x_2\beta/\sqrt{n} + \frac{\kappa\beta}{n}  \big) \\
=&\  \frac{\sqrt{n}}{\beta}\Big(\frac{\beta}{\sqrt{n}} - \frac{1}{x_2} \frac{\kappa\beta	}{n}\Big)    + \frac{\sqrt{n}}{\beta}\big(   x_2  -  \kappa/\sqrt{n} -  \beta/\sqrt{n} + \frac{\kappa\beta}{n} \frac{1}{x_2}  \big) \\
=&\  \frac{\sqrt{n}}{\beta}(x_2 - \kappa/\sqrt{n}) = f^*_1(0,x_2).
\end{align*}
Therefore, our $f^*(x)$ satisfies \eqref{eq:poisson1}--\eqref{eq:poisson2}. 

\subsubsection{Second Derivatives}
It remains to prove the bounds on the second derivatives \eqref{eq:dpos}--\eqref{eq:d22}. When $x \in \{x_2 \in [0,\kappa/\sqrt{n}]\}$, $f_{11}^*(x)=f_{12}^*(x)=f_{22}^*(x)=0$  and when $x \in \{x \leq \Gamma^{(\kappa)},\ x_2 \geq \kappa/\sqrt{n} \}$, $f_{11}^*(x)=f_{12}^*(x)=0$ and 
\begin{align*}
f_{22}^*(x) = \frac{1}{x_2^2}\frac{\kappa}{\sqrt{n}} \leq \frac{\sqrt{n}}{\kappa} \leq \frac{\sqrt{n}}{\beta},
\end{align*}
which satisfies \eqref{eq:dpos}--\eqref{eq:d22}.

Therefore, we are left to deal with the case when $x \geq \Gamma^{(\kappa)}$.  First we take on $f_{11}^*(x)$. Differentiating \eqref{eq:d1} and using \eqref{eq:tauder} one arrives at
\begin{align*}
f_{11}^*(x) =&\ \frac{\sqrt{n}}{\beta} e^{-2\tau(x)} \frac{x_2e^{-\tau(x)} +  \big( x_2e^{-\tau(x)}  -\kappa/\sqrt{n} \big)}{ \big( x_2e^{-\tau(x)}  -\beta/\sqrt{n} \big)},
\end{align*}
from which we conclude that
\begin{align*}
0 \leq f_{11}^*(x) \leq \frac{\sqrt{n}}{\beta}\Big( \frac{x_2e^{-\tau(x)}  }{   x_2e^{-\tau(x)}  -\beta/\sqrt{n}  } + 1 \Big) =&\ \frac{\sqrt{n}}{\beta}\Big( \frac{1}{   1  -\frac{\beta/\sqrt{n}}{x_2e^{-\tau(x)}}  } + 1 \Big) \\
\leq&\ \frac{\sqrt{n}}{\beta}\Big( \frac{1}{   1  -\frac{\beta}{\kappa}  } + 1 \Big),
\end{align*}
where all three inequalities above follow from the fact that $x_2 e^{-\tau(x)} \geq \kappa/\sqrt{n} > \beta/\sqrt{n}$; c.f.\ \eqref{eq:gammaprop} in Lemma~\ref{lem:tau}. 
Taking the derivative in \eqref{eq:d1} with respect to $x_2$, we see that
\begin{align*}
&f_{12}^*(x)=  \frac{\sqrt{n}}{\beta} \big( -\tau_2(x) e^{-\tau(x)}\big) \big(   x_2  e^{-\tau(x)}   -\kappa/\sqrt{n}  \big) + \frac{\sqrt{n}}{\beta}  e^{-\tau(x)} \big( e^{-\tau(x)}   -  \tau_2(x) x_2  e^{-\tau(x)} \big). 
\end{align*}
The quantity above is non-negative because  $x_2 e^{-\tau(x)} \geq \kappa/\sqrt{n}$ and  $-\tau_2(x) \geq 0$; the latter follows from \eqref{eq:tauder}. Lastly, we can differentiate \eqref{eq:d2} and use $\tau_2(x) = \tau_1(x) \tau(x)$ from \eqref{eq:tauder} to see that 
\begin{align*}
f_{22}^*(x) =&\ \frac{1}{x_2^2} \frac{\kappa}{\sqrt{n}}    + \frac{\sqrt{n}}{\beta}\big(    e^{-\tau(x)} - \tau_1(x) \tau(x) x_2 e^{-\tau(x)}  \big) \Big( \frac{x_2  e^{-\tau(x)}   -\beta/\sqrt{n}}{x_2} + \tau(x) e^{-\tau(x)} \Big)\\
 &+ \frac{\sqrt{n}}{\beta}\big(   x_2  e^{-\tau(x)} - \kappa/\sqrt{n}\big) \Big( \frac{\beta/\sqrt{n}}{x_2^2}  - \tau_1(x) \tau^2(x) e^{-\tau(x)} \Big).
\end{align*}
Again, $f_{22}^*(x) \geq 0$ because  $x_2e^{-\tau(x)} \geq \kappa/\sqrt{n}$ and $-\tau_1(x) \geq 0$. Let us now bound $f_{22}^*(x)$. The first term on the right hand side above is bounded by $\sqrt{n}/\kappa$, because $x \geq \Gamma^{(\kappa)}$ implies $x_2 \geq \kappa/\sqrt{n}$. For the second term, note that 
\begin{align*}
 &\frac{x_2  e^{-\tau(x)}   -\beta/\sqrt{n}}{x_2} + \tau(x) e^{-\tau(x)}  \leq  1 + e^{-1} \leq 2,  
\end{align*}
and using the form of $\tau_1(x)$ from \eqref{eq:tauder},
\begin{align*}
 e^{-\tau(x)} - \tau_1(x) \tau(x) x_2 e^{-\tau(x)}   =&\   e^{-\tau(x)} +\frac{e^{-\tau(x)}}{x_2 e^{-\tau(x)}-\beta/\sqrt{n}} \tau x_2 e^{-\tau(x)} \\
 =&\ e^{-\tau(x)} + \frac{\tau e^{-\tau(x)}}{1-\frac{\beta/\sqrt{n}}{x_2e^{-\tau(x)}} } \leq 1 + \frac{e^{-1}}{1-\frac{\beta }{\kappa} }  \leq 1 + \frac{\kappa}{\kappa-\beta}.
\end{align*}
 For the third term, observe that
\begin{align*}
&\big(   x_2  e^{-\tau(x)} - \kappa/\sqrt{n}\big) \Big( \frac{\beta/\sqrt{n}}{x_2^2}  - \tau_1(x) \tau^2(x) e^{-\tau(x)} \Big)\\
 \leq&\ \big(   x_2  e^{-\tau(x)} - \kappa/\sqrt{n}\big) \Big(\frac{1}{x_2} \frac{\beta}{\kappa}  - \tau_1(x) \tau^2(x) e^{-\tau(x)} \Big)\\
=&\ \frac{ x_2  e^{-\tau(x)} - \kappa/\sqrt{n}}{x_2}  \frac{\beta}{\kappa} + \frac{x_2  e^{-\tau(x)} - \kappa/\sqrt{n}}{x_2  e^{-\tau(x)} - \beta/\sqrt{n}}    \tau^2(x) e^{-2\tau(x)}  \\
\leq&\ \frac{\beta}{\kappa} + 1 \leq 2,
\end{align*}
where we use $x_2 \geq \kappa/\sqrt{n}$ in the first inequality, the form of $\tau_1(x)$ in \eqref{eq:tauder} in the first equation, and the fact that $\beta < \kappa$ in the last two inequalities.
Combining the bounds on all three terms, we conclude that 
\begin{align*}
f_{22}^*(x) \leq \frac{\sqrt{n}}{\kappa} + \frac{\sqrt{n}}{\beta} 2 \Big( 1 + \frac{\kappa}{\kappa-\beta}\Big) + 2\frac{\sqrt{n}}{\beta} \leq \frac{\sqrt{n}}{\beta}  \Big( 5 + \frac{2\kappa}{\kappa-\beta}\Big),
\end{align*}
where in the last inequality we used the fact that $\sqrt{n}/\kappa < \sqrt{n}/\beta$.
\end{proof}

\section{Proving Lemma~\ref{lem:expergodderivs}. } \label{app:expergodderivs}
In this section we prove Lemma~\ref{lem:expergodderivs} by constructing solutions to \eqref{eq:pde1} and \eqref{eq:pde2}. The intuition behind the forms of these solutions is the same as in Section~\ref{sec:derbounds}. Namely, that
\begin{align*}
f^{(1)}(x) = \int_{0}^{\infty} \phi^{(\kappa_1/\sqrt{n},\kappa_2/\sqrt{n})}(-v_1^x(t)) dt, \quad \text{ and } \quad f^{(2)}(x) = \int_{0}^{\infty} \phi^{(\kappa_1/\sqrt{n},\kappa_2/\sqrt{n})}(v_2^x(t)) dt
\end{align*}
solves \eqref{eq:pde1} and \eqref{eq:pde2}, respectively. For the remainder of this section, we fix $\kappa_1   < \kappa_2$ with $\kappa_1 > \beta$, and let us write $\phi(x)$ instead of $\phi^{(\kappa_1/\sqrt{n},\kappa_2/\sqrt{n})}(x)$ to simplify notation.
\subsection{Solving the first PDE.}
We begin by constructing a candidate solution to \eqref{eq:pde1}, and proving the associated properties in Lemma~\ref{lem:expergodderivs}. Recall the definition of $W(x)$ from Section~\ref{sec:lambertw}, and for any $\kappa > \beta$, define 
\begin{align}
\tilde \tau^{(\kappa)}(x) =  \frac{-(x_1 + \beta/\sqrt{n})}{x_2} - W \Big( \frac{(\kappa-\beta)/\sqrt{n}}{x_2} e^{\frac{-(x_1+\beta/\sqrt{n})}{x_2} } \Big), \quad x_1 \leq -\kappa/\sqrt{n},\ x_2 > 0. \label{eq:tautilde}
\end{align}
The quantity in \eqref{eq:tautilde} is well defined because the argument of $W(\cdot)$ is positive. Furthermore, differentiability of  $W(\cdot)$ implies  differentiability of $\tilde \tau^{(\kappa)}(x)$.  One can check that
\begin{align}
-\beta/\sqrt{n} + (x_1 + \beta/\sqrt{n}) e^{-\tilde \tau^{(\kappa)}(x)} + x_2 \tilde \tau^{(\kappa)}(x) e^{-\tilde \tau^{(\kappa)}(x)} = -\kappa/\sqrt{n} \label{eq:tauhit}
\end{align}
for those $x$ where $\tilde \tau^{(\kappa)}(x)$ is defined by repeating the arguments used to show the equivalence of \eqref{eq:step1} and \eqref{eq:eta} in Section~\ref{app:gamma}.  Intuitively,  $\tilde \tau^{(\kappa)}(x)$ is the time the fluid model hits the set $\{x_1 = -\kappa/\sqrt{n}\}$. The following lemma tells us that we can extend the definition of $\tilde \tau^{(\kappa)}(x)$ to $x_2 = 0$; it is proved in Section~\ref{app:taulim}. 
\begin{lemma} \label{lem:taulim}
For any $\kappa > \beta$ and  $x_1 \leq -\kappa/\sqrt{n}$, 
\begin{align}
\lim_{x_2 \downarrow 0} \tilde \tau^{(\kappa)}(x) = \log\Big( \frac{-\sqrt{n}x_1 - \beta}{\kappa-\beta} \Big), \label{eq:tauextendlimit}
\end{align}
meaning that the function in \eqref{eq:tautilde} can be extended to $x_2 \geq 0$. Furthermore,
\begin{align}
\tilde \tau^{(\kappa)}(-\kappa/\sqrt{n},x_2) = 0, \quad x_2 \geq 0 \label{eq:tautildezero}
\end{align}
and
\begin{align}
-\tilde \tau_1^{(\kappa)}(x) = \frac{e^{-\tilde \tau^{(\kappa)}(x)}}{x_2e^{-\tilde \tau^{(\kappa)}(x)} + (\kappa-\beta)/\sqrt{n}}, \quad \tilde \tau_2^{(\kappa)}(x) = -\tilde \tau_1^{(\kappa)}(x)  \tilde \tau^{(\kappa)}(x) \label{eq:tautildeder}
\end{align}
for $x_1 \leq -\kappa/\sqrt{n}$ and $x_2 \geq 0,$
where the derivatives at $\{x_1 = -\kappa/\sqrt{n}\}$ and $\{x_2 = 0\}$ are interpreted as the one-sided derivatives.
\end{lemma}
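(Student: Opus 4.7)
The lemma makes three claims, and my plan is to handle each using the defining property $W(u)e^{W(u)}=u$ of the Lambert $W$ function together with the hit equation \eqref{eq:tauhit}.

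For the limit \eqref{eq:tauextendlimit}, I would first rewrite the definition \eqref{eq:tautilde} using the identity \eqref{eq:loglambert}, $W(y)=\log y-\log W(y)$, valid for $y>0$. With $y=\tfrac{(\kappa-\beta)/\sqrt n}{x_2}e^{-(x_1+\beta/\sqrt n)/x_2}$, this substitution cancels the blowing-up term $-(x_1+\beta/\sqrt n)/x_2$ inside the $W$, and collapses $\tilde \tau^{(\kappa)}(x)$ to the much cleaner expression $\log\!\bigl(\tfrac{\sqrt n\, x_2\,W(y)}{\kappa-\beta}\bigr)$. Because $x_1\leq -\kappa/\sqrt n<-\beta/\sqrt n$, the exponent inside $y$ tends to $+\infty$ as $x_2\downarrow 0$, so $y\to \infty$ and $W(y)\to\infty$. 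The same identity $W(y)=\log y-\log W(y)$ then forces $W(y)/\log y\to 1$; combined with the fact that $\log y$ is asymptotically dominated by the term $-(x_1+\beta/\sqrt n)/x_2$ as $x_2\downarrow 0$, this yields $x_2 W(y)\to -x_1-\beta/\sqrt n$, which plugged into the cleaner expression delivers the stated limit.

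Claim \eqref{eq:tautildezero} is handled by direct substitution. Plugging $x_1=-\kappa/\sqrt n$ into \eqref{eq:tautilde} makes the argument of $W$ equal to $ze^z$ with $z=\tfrac{(\kappa-\beta)/\sqrt n}{x_2}\geq 0 > -1$, so \eqref{eq:lambertprop} gives $W(ze^z)=z$ and the two terms in the definition of $\tilde\tau^{(\kappa)}$ cancel. The boundary case $x_2=0$ then follows from the previous step applied at $x_1=-\kappa/\sqrt n$, which gives $\log\tfrac{\kappa-\beta}{\kappa-\beta}=0$.

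For the derivative formulas \eqref{eq:tautildeder}, implicit differentiation of the hit equation \eqref{eq:tauhit} is cleaner than differentiating \eqref{eq:tautilde} directly. Writing $\tau=\tilde \tau^{(\kappa)}(x)$ for brevity and differentiating \eqref{eq:tauhit} with respect to $x_1$ and $x_2$ (then dividing out the common factor $e^{-\tau}$) produces linear equations whose solutions are $\tau_1=1/[(x_1+\beta/\sqrt n)+x_2(\tau-1)]$ and a formula for $\tau_2$ proportional to $\tau\cdot\tau_1$. Multiplying the hit equation \eqref{eq:tauhit} by $e^{\tau}$ and rearranging lets me rewrite the denominator $(x_1+\beta/\sqrt n)+x_2(\tau-1)$ as $-e^{\tau}[(\kappa-\beta)/\sqrt n+x_2 e^{-\tau}]$, which delivers the closed form for $\tilde\tau_1^{(\kappa)}$ stated in \eqref{eq:tautildeder} and, together with the proportionality, the formula for $\tilde\tau_2^{(\kappa)}$. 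Differentiability at the one-sided boundaries $\{x_1=-\kappa/\sqrt n\}$ and $\{x_2=0\}$ follows because the argument of $W$ in \eqref{eq:tautilde} remains strictly positive throughout the closed region (and tends to $+\infty$ as $x_2\downarrow 0$), so $W$ and hence $\tilde\tau^{(\kappa)}$ remain smooth up to the boundary, with the formulas giving the one-sided derivatives there. The main technical obstacle will be justifying the asymptotic $W(y)/\log y\to 1$ used in the limit; it is standard but must be extracted directly from the functional equation rather than just cited. Everything else is algebraic manipulation of \eqref{eq:tauhit} and \eqref{eq:tautilde}.
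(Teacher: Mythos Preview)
Your proposal is correct and follows essentially the same route as the paper. Both arguments first use \eqref{eq:loglambert} to rewrite $\tilde\tau^{(\kappa)}(x)=\log\bigl(\tfrac{\sqrt n\,x_2 W(y)}{\kappa-\beta}\bigr)$, both handle \eqref{eq:tautildezero} by direct substitution via $W(ze^z)=z$, and both obtain \eqref{eq:tautildeder} by implicit differentiation. The only notable difference is in the limit step: the paper evaluates $\lim_{x_2\downarrow 0} x_2 W(y)$ via L'H\^opital using the explicit derivative $W'(u)=W(u)/(u(1+W(u)))$, whereas you use the asymptotic $W(y)\sim\log y$ extracted from $W(y)+\log W(y)=\log y$; your route is slightly more elementary since it avoids $W'$, while the paper's route is mechanically shorter once $W'$ is available. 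For the derivatives, the paper first rewrites \eqref{eq:tauhit} as $\tilde\tau=\tfrac{-(x_1+\beta/\sqrt n)}{x_2}-\tfrac{(\kappa-\beta)/\sqrt n}{x_2 e^{-\tilde\tau}}$ and differentiates that, whereas you differentiate \eqref{eq:tauhit} directly and then simplify the denominator; these are algebraically equivalent. (Your calculation will in fact produce $\tilde\tau_2=\tilde\tau_1\tilde\tau$, matching the analogous relation \eqref{eq:tauder} for $\tau$; the minus sign displayed in \eqref{eq:tautildeder} appears to be a typo in the statement, as the paper's own proof refers back to the argument for \eqref{eq:exampletau1}--\eqref{eq:exampletau2}.)
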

The following lemma presents the candidate solution to \eqref{eq:pde1}; it is proved in Section~\ref{app:phi1}. 
\begin{lemma} \label{lem:phi1}
The function $f^{(1)}: \Omega \to \R_+$ defined as  
\begin{align*}
f^{(1)}(x) =
\begin{cases}
\tilde  \tau^{(\kappa_2)}(x) + \int_{\tilde  \tau^{(\kappa_2)}(x)}^{\tilde \tau^{(\kappa_1)}(x)} \phi\big(\beta/\sqrt{n} -(x_1+\beta/\sqrt{n})e^{-t} - x_2 t e^{-t} \big) dt,& x_1 \leq -\kappa_2/\sqrt{n},\\
\int_{0}^{\tilde \tau^{(\kappa_1)}(x)} \phi\big(\beta/\sqrt{n} -(x_1+\beta/\sqrt{n})e^{-t} - x_2 t e^{-t} \big) dt,\quad & x_1 \in \Big[ -\frac{\kappa_2}{\sqrt{n}}, -\frac{\kappa_1}{\sqrt{n}} \Big], \\ 
0,  & x_1 \in [-\kappa_1/\sqrt{n},0].
\end{cases}
\end{align*}
belongs to $C^2(\Omega)$. Furthermore,
\begin{align*}
f_{1}^{(1)}(x) =
\begin{cases}
 - \int_{\tilde  \tau^{(\kappa_2)}(x)}^{\tilde \tau^{(\kappa_1)}(x)} e^{-t}\phi'\big(\beta/\sqrt{n} -(x_1+\beta/\sqrt{n})e^{-t} - x_2 t e^{-t} \big) dt,\quad &  x_1 \leq -\kappa_2/\sqrt{n},\\
- \int_{0}^{\tilde \tau^{(\kappa_1)}(x)} e^{-t}\phi'\big(\beta/\sqrt{n} -(x_1+\beta/\sqrt{n})e^{-t} - x_2 t e^{-t} \big) dt,& x_1 \in \Big[ -\frac{\kappa_2}{\sqrt{n}}, -\frac{\kappa_1}{\sqrt{n}} \Big],\\
0, &  x_1 \in [-\kappa_1/\sqrt{n}, 0].
\end{cases}
\end{align*}
and 
\begin{align*}
f_2^{(1)}(x) =
\begin{cases}
 - \int_{\tilde  \tau^{(\kappa_2)}(x)}^{\tilde \tau^{(\kappa_1)}(x)} te^{-t}\phi'\big(\beta/\sqrt{n} -(x_1+\beta/\sqrt{n})e^{-t} - x_2 t e^{-t} \big) dt,\quad &  x_1 \leq -\kappa_2/\sqrt{n},\\
- \int_{0}^{\tilde \tau^{(\kappa_1)}(x)} te^{-t}\phi'\big(\beta/\sqrt{n} -(x_1+\beta/\sqrt{n})e^{-t} - x_2 t e^{-t} \big) dt,& x_1 \in \Big[ -\frac{\kappa_2}{\sqrt{n}}, -\frac{\kappa_1}{\sqrt{n}} \Big],\\
0, &  x_1 \in [-\kappa_1/\sqrt{n}, 0].
\end{cases}
\end{align*}

\end{lemma}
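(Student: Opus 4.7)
The plan is to apply Leibniz's rule for differentiation under the integral sign in each of the three subdomains, exploiting the key identity \eqref{eq:tauhit}: at $t = \tilde\tau^{(\kappa_i)}(x)$, the integrand's argument $g(x,t) := \beta/\sqrt{n} - (x_1+\beta/\sqrt{n})e^{-t} - x_2 t e^{-t}$ equals the breakpoint $\kappa_i/\sqrt{n}$ of $\phi = \phi^{(\kappa_1,\kappa_2)}$. At these breakpoints $\phi$ takes its constant boundary values, and by the quadratic smoothing in \eqref{eq:phi} the derivative $\phi'$ vanishes there as well. These two facts are what make all the Leibniz boundary terms tractable.

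First I would check continuity of $f^{(1)}$ across the internal boundaries $x_1 = -\kappa_i/\sqrt{n}$. By \eqref{eq:tautildezero}, $\tilde\tau^{(\kappa_2)}(x) = 0$ when $x_1 = -\kappa_2/\sqrt{n}$, so the first-case expression collapses to the middle-case one; similarly $\tilde\tau^{(\kappa_1)}(x) = 0$ when $x_1 = -\kappa_1/\sqrt{n}$, forcing the middle-case integral to vanish and match the third case. Next I would compute the first partial derivatives piece by piece. In the middle piece the lower limit $0$ is $x$-independent and the upper boundary contributes $\phi(\kappa_1/\sqrt{n})\,\tilde\tau_i^{(\kappa_1)}(x) = 0$, leaving only $\int_0^{\tilde\tau^{(\kappa_1)}(x)} \phi'(g(x,t))\,\partial_i g(x,t)\,dt$ with $\partial_1 g = -e^{-t}$ and $\partial_2 g = -t e^{-t}$, which matches the claimed formulas. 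In the first piece Leibniz generates an additional lower boundary term $-\phi(\kappa_2/\sqrt{n})\,\tilde\tau_i^{(\kappa_2)}(x) = -\tilde\tau_i^{(\kappa_2)}(x)$; this is canceled exactly by the $+\tilde\tau_i^{(\kappa_2)}(x)$ coming from differentiating the explicit prefactor $\tilde\tau^{(\kappa_2)}(x)$, so the first-piece derivative reduces to the same integrand with the range shifted to $[\tilde\tau^{(\kappa_2)}(x),\tilde\tau^{(\kappa_1)}(x)]$. Continuity of $f_1^{(1)}$ and $f_2^{(1)}$ across the interfaces again follows from \eqref{eq:tautildezero}.

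The $C^2$ claim is handled by differentiating the first-derivative formulas one more time via Leibniz. Now the boundary contributions at $t = \tilde\tau^{(\kappa_i)}(x)$ involve $\phi'(\kappa_i/\sqrt{n})$, which both vanish, so only interior integrals of the form $\int \phi''(g(x,t))\,(\partial_i g)(\partial_j g)\,dt$ plus lower-order smooth pieces survive. The main obstacle is that $\phi''$ is only essentially bounded -- piecewise linear with a jump at the midpoint of the transition region -- so $\phi''(g(x,t))$ fails to be pointwise continuous in $x$. However, the kernels $e^{-t}, te^{-t}, t^2 e^{-t}$ are smooth and the map $t \mapsto g(x,t)$ is strictly monotone on the relevant range (the unreflected fluid trajectory traverses the transition region monotonically), so the a.e.\ discontinuity of $\phi''(g(x,t))$ is integrated out; combined with the differentiability of $\tilde\tau^{(\kappa_i)}$ from \eqref{eq:tautildeder} and a dominated-convergence argument, this yields continuous second partials. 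Matching across $x_1 = -\kappa_i/\sqrt{n}$ then follows once more from $\tilde\tau^{(\kappa_i)}(-\kappa_i/\sqrt{n}, x_2) = 0$ together with the vanishing of $\phi$ and $\phi'$ at the breakpoints, which makes the second partials of the adjacent pieces agree at the interface.
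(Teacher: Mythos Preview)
Your proposal is correct and follows essentially the same approach as the paper: continuity across the interfaces via \eqref{eq:tautildezero}, Leibniz differentiation with the boundary terms handled by $\phi(\kappa_1/\sqrt{n})=0$, $\phi(\kappa_2/\sqrt{n})=1$ (the latter canceling the derivative of the explicit prefactor $\tilde\tau^{(\kappa_2)}$), and the second-derivative boundary terms killed by $\phi'(\kappa_i/\sqrt{n})=0$. Your treatment is in fact more careful than the paper's, which simply asserts that continuity of the second-order derivatives ``can be verified using \eqref{eq:tautildezero}'' without addressing the jumps of $\phi''$; your monotonicity-plus-dominated-convergence argument fills that gap.
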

Let us now verify that $f^{(1)}(x)$ from Lemma~\ref{lem:phi1} satisfies \eqref{eq:pde1}. The boundary condition $f_1^{(1)}(0,x_2) = f_2^{(1)}(0,x_2)$ is trivially satisfied. For $x_1 \leq -\kappa_2/\sqrt{n}$, 
\begin{align*}
&(-x_1 + x_2 - \beta/\sqrt{n}) f_1^{(1)}(x) - x_2 f_2^{(1)}(x)\\
 =&\ \int_{\tilde  \tau^{(\kappa_2)}(x)}^{\tilde \tau^{(\kappa_1)}(x)} \big((x_1 - x_2 + \beta/\sqrt{n}) e^{-t} + x_2 t e^{-t}\big) \phi'\big(\beta/\sqrt{n} -(x_1+\beta/\sqrt{n})e^{-t} - x_2 t e^{-t} \big) dt\\
=&\ \int_{\kappa_2/\sqrt{n}}^{\kappa_1/\sqrt{n}}  \phi'(u) du = \phi(\kappa_1/\sqrt{n}) - \phi(\kappa_2/\sqrt{n}) = -1 = -\phi(-x_1),
\end{align*}
and a similar argument works when $x_1 \in \Big[ -\frac{\kappa_2}{\sqrt{n}}, -\frac{\kappa_1}{\sqrt{n}} \Big]$. Therefore, $f^{(1)}(x)$ solves \eqref{eq:pde1}. 

We now bound $f_1^{(1)}(x)$ and $f_{11}^{(1)}(x)$ to prove \eqref{eq:expf1}, and then bound $f^{(1)}(x)$ to prove \eqref{eq:expfsum1}. Since $\phi'(\kappa_2/\sqrt{n}) = \phi'(\kappa_1/\sqrt{n}) = 0$ (see \eqref{eq:phiderzero}), differentiating $f_{1}^{(1)}(x)$ gives us
\begin{align*}
f_{11}^{(1)}(x) =
\begin{cases}
 - \int_{\tilde  \tau^{(\kappa_2)}(x)}^{\tilde \tau^{(\kappa_1)}(x)} e^{-2t}\phi''\big(\beta/\sqrt{n} -(x_1+\beta/\sqrt{n})e^{-t} - x_2 t e^{-t} \big) dt,\quad &  x_1 \leq -\kappa_2/\sqrt{n},\\
- \int_{0}^{\tilde \tau^{(\kappa_1)}(x)} e^{-2t}\phi''\big(\beta/\sqrt{n} -(x_1+\beta/\sqrt{n})e^{-t} - x_2 t e^{-t} \big) dt,& x_1 \in \Big[ -\frac{\kappa_2}{\sqrt{n}}, -\frac{\kappa_1}{\sqrt{n}} \Big],\\
0, &  x_1 \in [-\kappa_1/\sqrt{n}, 0].
\end{cases}
\end{align*}
 Using \eqref{eq:phider}, 
\begin{align*}
&\abs{f_1^{(1)}(x)} \leq \frac{4\sqrt{n}\abs{ \tilde \tau^{(\kappa_1)}(x) - \tilde  \tau^{(\kappa_2)}(x)}}{\kappa_2 - \kappa_1}, \quad &x_1 \leq -\kappa_2/\sqrt{n}, \\
&\abs{f_{11}^{(1)}(x)} \leq \frac{12n\abs{ \tilde \tau^{(\kappa_1)}(x) - \tilde  \tau^{(\kappa_2)}(x)}}{(\kappa_2 - \kappa_1)^2}, \quad &x_1 \leq -\kappa_2/\sqrt{n},\\
&\abs{f_1^{(1)}(x)} \leq \frac{4\sqrt{n}\abs{ \tilde \tau^{(\kappa_1)}(x)  }}{\kappa_2 - \kappa_1}, \quad \abs{f_{11}^{(1)}(x)} \leq \frac{12n\abs{ \tilde \tau^{(\kappa_1)}(x)}}{(\kappa_2 - \kappa_1)^2}, \quad &x_1 \in \Big[ -\frac{\kappa_2}{\sqrt{n}}, -\frac{\kappa_1}{\sqrt{n}} \Big].
\end{align*}
When $x_1 \leq -\kappa_2/\sqrt{n}$, 
\begin{align}
\tilde \tau^{(\kappa_1)}(x) - \tilde  \tau^{(\kappa_2)}(x) =&\ W \Big( \frac{(\kappa_2-\beta)/\sqrt{n}}{x_2} e^{\frac{-(x_1+\beta/\sqrt{n})}{x_2} } \Big) - W \Big( \frac{(\kappa_1-\beta)/\sqrt{n}}{x_2} e^{\frac{-(x_1+\beta/\sqrt{n})}{x_2} } \Big)\notag  \\
=&\ \log\Big( \frac{\kappa_2 -\beta}{\kappa_1-\beta} \Big)- \log \Bigg( \frac{W \Big( \frac{(\kappa_2-\beta)/\sqrt{n}}{x_2} e^{\frac{-(x_1+\beta/\sqrt{n})}{x_2} } \Big)}{W \Big( \frac{(\kappa_1-\beta)/\sqrt{n}}{x_2} e^{\frac{-(x_1+\beta/\sqrt{n})}{x_2} } \Big)}  \Bigg) \notag  \\
\leq&\ \log\Big( \frac{\kappa_2 -\beta}{\kappa_1-\beta} \Big), \quad x_1 \leq -\kappa_2/\sqrt{n}, \label{eq:tildetaudiff}
\end{align}
where the second equation follows from \eqref{eq:loglambert} and the inequality follows from the fact that $W(\cdot)$ is an increasing function and $\kappa_2 > \kappa_1$. The first equation in \eqref{eq:tildetaudiff} and monotonicity of $W(\cdot)$ means that  $\tilde \tau^{(\kappa_1)}(x) - \tilde  \tau^{(\kappa_2)}(x) > 0$, and therefore 
\begin{align*}
\abs{\tilde \tau^{(\kappa_1)}(x) - \tilde  \tau^{(\kappa_2)}(x)} \leq  \log\Big( \frac{\kappa_2 -\beta}{\kappa_1-\beta} \Big), \quad x_1 \leq -\kappa_2/\sqrt{n}.
\end{align*}
When $x_1 \in \Big[ -\frac{\kappa_2}{\sqrt{n}}, -\frac{\kappa_1}{\sqrt{n}} \Big]$,
\begin{align}
0 = \tilde \tau^{(\kappa_1)}(-\kappa_1/\sqrt{n},x_2) \leq&\ \tilde \tau^{(\kappa_1)}(x) \notag \\
\leq&\ \tilde \tau^{(\kappa_1)}(-\kappa_2/\sqrt{n},x_2)  \notag \\
=&\ \tilde \tau^{(\kappa_1)}(-\kappa_2/\sqrt{n},x_2) - \tilde \tau^{(\kappa_2)}(-\kappa_2/\sqrt{n},x_2) \notag \\
\leq&\  \log\Big( \frac{\kappa_2 -\beta}{\kappa_1-\beta} \Big). \label{eq:mid}
\end{align}
The two equalities  above are due to \eqref{eq:tautildezero}, the first two inequalities follow from the fact that $\tilde \tau_{1}^{(\kappa_1)}(x) \leq 0$ in \eqref{eq:tautildeder}, and the last inequality comes from \eqref{eq:tildetaudiff}. Therefore, 
\begin{align*}
&\abs{f_1^{(1)}(x)} \leq \frac{4\sqrt{n}}{\kappa_2-\kappa_1}\log\Big( \frac{\kappa_2 -\beta}{\kappa_1-\beta} \Big), \quad  \abs{f_{11}^{(1)}(x)} \leq \frac{12n}{ (\kappa_2-\kappa_1)^2}\log\Big( \frac{\kappa_2 -\beta}{\kappa_1-\beta} \Big).
\end{align*}
Choosing $\kappa_1 = \beta + \epsilon$ and $\kappa_2 = \beta + 2\epsilon$ proves \eqref{eq:expf1}. To prove \eqref{eq:expfsum1}, note that 
\begin{align*}
\abs{f^{(1)}(x)} \leq \tilde \tau^{(\kappa_1)}(x) \leq \log\Big( \frac{\kappa_2 -\beta}{\kappa_1-\beta} \Big), \quad x_1 \in \Big[ -\frac{\kappa_2}{\sqrt{n}}, -\frac{\kappa_1}{\sqrt{n}} \Big],
\end{align*}
where the first inequality follows from the form of $f^{(1)}(x)$ and the second inequality follows from \eqref{eq:mid}. For $x_1 \leq -\kappa_1/\sqrt{n}$, 
\begin{align*}
\abs{f^{(1)}(x)} \leq&\ \tilde \tau^{(\kappa_2)}(x) + \abs{\tilde \tau^{(\kappa_1)}(x) - \tilde  \tau^{(\kappa_2)}(x)} \\
=&\ \tilde \tau^{(\kappa_2)}(x) + \tilde \tau^{(\kappa_1)}(x) - \tilde  \tau^{(\kappa_2)}(x)\\
=&\ \tilde \tau^{(\kappa_1)}(x)\\
\leq&\ \log\Big( \frac{\kappa_2 -\beta}{\kappa_1-\beta} \Big),
\end{align*}
where the last inequality follows from \eqref{eq:tildetaudiff} and the fact that $\tilde \tau^{(\kappa_2)}(x) \geq 0$. Choosing $\kappa_1 = \beta + \epsilon$ and $\kappa_2 = \beta + 2\epsilon$ proves \eqref{eq:expfsum1}.

\subsubsection{Proof of Lemma~\ref{lem:taulim}. } \label{app:taulim}
\begin{proof}[Proof of Lemma~\ref{lem:taulim}] 
Let 
\begin{align*}
u = \frac{(\kappa-\beta)/\sqrt{n}}{x_2} e^{\frac{-(x_1+\beta/\sqrt{n})}{x_2} }.
\end{align*}
Using \eqref{eq:loglambert}, 
\begin{align*}
\tilde \tau^{(\kappa)}(x) =  \frac{-(x_1 + \beta/\sqrt{n})}{x_2} - W(u) =   \log\bigg( \frac{\sqrt{n}}{\kappa-\beta}   x_2W(u) \bigg), \quad x_1 \leq -\kappa/\sqrt{n},\ x_2 > 0.
\end{align*}
Therefore, it remains to evaluate 
\begin{align*}
\lim_{x_2 \downarrow 0} \frac{W\Big( \frac{(\kappa-\beta)/\sqrt{n}}{x_2} e^{\frac{-(x_1+\beta/\sqrt{n})}{x_2} } \Big)}{1/x_2},
\end{align*}
which we do using L'Hopital's rule. The derivative of the numerator with respect to $x_2$ is 
\begin{align*}
W'(u)u \bigg( -\frac{1}{x_2} + \frac{x_1+\beta/\sqrt{n}}{x_2^2} \bigg)=&\ \frac{W(u)}{1 + W(u)}  \frac{1}{x_2^2} ( -x_2 + x_1+\beta/\sqrt{n}),
\end{align*}
where we used \eqref{eq:wder} to get the equality above. Therefore 
\begin{align*}
\lim_{x_2 \downarrow 0} \frac{W\Big( \frac{(\kappa-\beta)/\sqrt{n}}{x_2} e^{\frac{-(x_1+\beta/\sqrt{n})}{x_2} } \Big)}{1/x_2} = \lim_{x_2 \downarrow 0} \frac{\frac{d}{dx_2} W\Big( \frac{(\kappa-\beta)/\sqrt{n}}{x_2} e^{\frac{-(x_1+\beta/\sqrt{n})}{x_2} } \Big)}{-1/x_2^2} = - x_1 - \beta/\sqrt{n},
\end{align*}
and \eqref{eq:tauextendlimit} follows.  We now prove \eqref{eq:tautildezero}, or that $\tilde \tau^{(\kappa)}(-\kappa/\sqrt{n}, x_2) = 0$ for $x_2 \geq 0$. The claim is true when $x_2 = 0$ by \eqref{eq:tauextendlimit}. For $x_2 > 0$,  
\begin{align*}
&\tilde \tau^{(\kappa)}(-\kappa/\sqrt{n}, x_2) = \frac{-(-\kappa/\sqrt{n} + \beta/\sqrt{n})}{x_2} - W \Big( \frac{(\kappa-\beta)/\sqrt{n}}{x_2} e^{\frac{-(-\kappa/\sqrt{n}+\beta/\sqrt{n})}{x_2} } \Big) = 0, 
\end{align*}
where the second equality comes from the fact that $W(x e^{x}) = x$; c.f. \eqref{eq:lambertprop}. That  Differentiability of $ \tilde \tau^{(\kappa)}(x)$ follows from the differentiability of $W(\cdot)$. To verify \eqref{eq:tautildeder}, plug in \eqref{eq:tautilde} into \eqref{eq:tauhit} to see that 
\begin{align*}
W\Big( \frac{(\kappa-\beta)/\sqrt{n}}{x_2} e^{\frac{-(x_1+\beta/\sqrt{n})}{x_2} } \Big) = \frac{(\kappa-\beta)/\sqrt{n}}{x_2 e^{-\tilde \tau^{(\kappa)}(x)}},
\end{align*}
and therefore 
\begin{align*}
\tilde \tau^{(\kappa)}(x) = \frac{-(x_1+\beta/\sqrt{n})}{x_2} - \frac{(\kappa-\beta)/\sqrt{n}}{x_2 e^{-\tilde \tau^{(\kappa)}(x)}}.
\end{align*}
The proof of \eqref{eq:tautildeder} is then identical to the arguments used to prove \eqref{eq:exampletau1}-\eqref{eq:exampletau2}.

\end{proof}

\subsubsection{Proof of Lemma~\ref{lem:phi1}. } \label{app:phi1}
\begin{proof}[Proof of Lemma~\ref{lem:phi1} ]
For convenience, we recall that 
\begin{align*}
&f^{(1)}(x) \\
=&\
\begin{cases}
\tilde  \tau^{(\kappa_2)}(x) + \int_{\tilde  \tau^{(\kappa_2)}(x)}^{\tilde \tau^{(\kappa_1)}(x)} \phi\big(\beta/\sqrt{n} -(x_1+\beta/\sqrt{n})e^{-t} - x_2 t e^{-t} \big) dt,\quad x_1 \leq -\kappa_2/\sqrt{n},& \\
\int_{0}^{\tilde \tau^{(\kappa_1)}(x)} \phi\big(\beta/\sqrt{n} -(x_1+\beta/\sqrt{n})e^{-t} - x_2 t e^{-t} \big) dt, \hfill  x_1 \in \Big[ -\frac{\kappa_2}{\sqrt{n}}, -\frac{\kappa_1}{\sqrt{n}} \Big],& \\ 
0, \hfill  x_1 \in [-\kappa_1/\sqrt{n},0].&
\end{cases}
\end{align*} 
Continuity of $f^{(1)}(x)$ on the sets $\{x_1 = -\kappa_2/\sqrt{n}\}$ and $\{x_1 = -\kappa_1/\sqrt{n}\}$ follows from \eqref{eq:tautildezero}. Let us differentiate $f^{(1)}(x)$ on the set $x_1 \leq -\kappa_2/\sqrt{n}$. Using the Leibniz rule, 
\begin{align*}
f_{1}^{(1)}(x) =&\ \tilde  \tau_{1}^{(\kappa_2)}(x) + \phi\big(\beta/\sqrt{n} -(x_1+\beta/\sqrt{n})e^{-\tilde  \tau^{(\kappa_1)}(x)} - x_2 \tilde  \tau^{(\kappa_1)}(x) e^{-\tilde  \tau^{(\kappa_1)}(x)} \big) \tilde \tau_{1}^{(\kappa_1)}(x) \\
&- \phi\big(\beta/\sqrt{n} -(x_1+\beta/\sqrt{n})e^{-\tilde  \tau^{(\kappa_2)}(x)} - x_2 \tilde  \tau^{(\kappa_2)}(x) e^{-\tilde  \tau^{(\kappa_2)}(x)} \big) \tilde  \tau_{1}^{(\kappa_2)}(x)\\
&- \int_{\tilde  \tau^{(\kappa_2)}(x)}^{\tilde \tau^{(\kappa_1)}(x)} e^{-t}\phi'\big(\beta/\sqrt{n} -(x_1+\beta/\sqrt{n})e^{-t} - x_2 t e^{-t} \big) dt\\
=&\ \tilde  \tau_{1}^{(\kappa_2)}(x) + \phi\big(\kappa_1/\sqrt{n}\big) \tilde \tau_{1}^{(\kappa_1)}(x) - \phi\big(\kappa_2/\sqrt{n}\big) \tilde  \tau_{1}^{(\kappa_2)}(x)\\
&- \int_{\tilde  \tau^{(\kappa_2)}(x)}^{\tilde \tau^{(\kappa_1)}(x)} e^{-t}\phi'\big(\beta/\sqrt{n} -(x_1+\beta/\sqrt{n})e^{-t} - x_2 t e^{-t} \big) dt\\
=&\ - \int_{\tilde  \tau^{(\kappa_2)}(x)}^{\tilde \tau^{(\kappa_1)}(x)} e^{-t}\phi'\big(\beta/\sqrt{n} -(x_1+\beta/\sqrt{n})e^{-t} - x_2 t e^{-t} \big) dt,
\end{align*}
where the second and third equalities follow from \eqref{eq:tauhit}, or 
\begin{align*}
&\phi\big(\beta/\sqrt{n} -(x_1+\beta/\sqrt{n})e^{-\tilde  \tau^{(\kappa_2)}(x)} - x_2 \tilde  \tau^{(\kappa_2)}(x) e^{-\tilde  \tau^{(\kappa_2)}(x)} \big) = \phi(\kappa_2/\sqrt{n}) = 1,\\
&\phi\big(\beta/\sqrt{n} -(x_1+\beta/\sqrt{n})e^{-\tilde  \tau^{(\kappa_1)}(x)} - x_2 \tilde  \tau^{(\kappa_1)}(x) e^{-\tilde  \tau^{(\kappa_1)}(x)} \big) = \phi(\kappa_1/\sqrt{n}) = 0.
\end{align*}
Repeating the same argument on the set $x_1 \in \big[ -\frac{\kappa_2}{\sqrt{n}}, -\frac{\kappa_1}{\sqrt{n}} \big]$, we conclude that
\begin{align*}
&f_{1}^{(1)}(x) \\
=&
\begin{cases}
 - \int_{\tilde  \tau^{(\kappa_2)}(x)}^{\tilde \tau^{(\kappa_1)}(x)} e^{-t}\phi'\big(\beta/\sqrt{n} -(x_1+\beta/\sqrt{n})e^{-t} - x_2 t e^{-t} \big) dt,\quad &  x_1 \leq -\kappa_2/\sqrt{n},\\
- \int_{0}^{\tilde \tau^{(\kappa_1)}(x)} e^{-t}\phi'\big(\beta/\sqrt{n} -(x_1+\beta/\sqrt{n})e^{-t} - x_2 t e^{-t} \big) dt,& x_1 \in \Big[ -\frac{\kappa_2}{\sqrt{n}}, -\frac{\kappa_1}{\sqrt{n}} \Big],\\
0, &  x_1 \in [-\kappa_1/\sqrt{n}, 0].
\end{cases}
\end{align*} 
and 
\begin{align*}
&f_{2}^{(1)}(x) \\
=&
\begin{cases}
 - \int_{\tilde  \tau^{(\kappa_2)}(x)}^{\tilde \tau^{(\kappa_1)}(x)} te^{-t}\phi'\big(\beta/\sqrt{n} -(x_1+\beta/\sqrt{n})e^{-t} - x_2 t e^{-t} \big) dt,\quad &  x_1 \leq -\kappa_2/\sqrt{n},\\
- \int_{0}^{\tilde \tau^{(\kappa_1)}(x)} te^{-t}\phi'\big(\beta/\sqrt{n} -(x_1+\beta/\sqrt{n})e^{-t} - x_2 t e^{-t} \big) dt,& x_1 \in \Big[ -\frac{\kappa_2}{\sqrt{n}}, -\frac{\kappa_1}{\sqrt{n}} \Big],\\
0, &  x_1 \in [-\kappa_1/\sqrt{n}, 0].
\end{cases}
\end{align*}
Continuity of the first  order derivatives follows from \eqref{eq:tautildezero}. Furthermore, existence and continuity of $f_{11}^{(1)}(x), f_{12}^{(1)}(x)$, and $f_{22}^{(1)}(x)$ follows from existence and continuity of $\phi''(x)$. 
\end{proof}

\subsection{Solving the second PDE.}
Recall the definitions of $\Gamma^{(\kappa)}$ and $\tau(x)$ from Lemmas~\ref{lem:gamma} and \ref{lem:tau}, respectively. Partition $\Omega$ into four subdomains:
\begin{align*}
&S_0 =  \{x \in \Omega \ |\ x_2 \leq \kappa_1/\sqrt{n} \},\\
&S_1 =  \{x \in \Omega \ |\ x_2 \geq \kappa_1/\sqrt{n},\ x \leq \Gamma^{(\kappa_1)} \},\\
&S_2 = \{x \in \Omega \ |\   \Gamma^{(\kappa_1)} \leq x \leq \Gamma^{(\kappa_2)} \},\\
&S_3 =  \{x \in \Omega \ |\ x \geq \Gamma^{(\kappa_2)}\}.
\end{align*}
Figure~\ref{fig:visualization} helps to visualize the four sets. Let us verify that $S_0\cup S_1 \cup S_2 \cup S_3 $ does indeed equal $\Omega$. Fix $x_1 \leq 0$ and $x_2 \geq 0$. Then $x$ must lie in one of 
\begin{align*}
&\{x_2 \leq \kappa_1/\sqrt{n}\}, \quad  \{ x_2 \geq \kappa_1/\sqrt{n},\ x \leq \Gamma^{(\kappa_1)} \}, \quad   \{ x_2 \geq \kappa_1/\sqrt{n},\    \Gamma^{(\kappa_1)} \leq x \leq \Gamma^{(\kappa_2)} \},\\
&  \text{ or } \{x_2 \geq \kappa_1/\sqrt{n},\ x \geq \Gamma^{(\kappa_1)},\  x \geq \Gamma^{(\kappa_2)}\}.
\end{align*}  
From \eqref{eq:gammaprop} we see that $x \geq \Gamma^{(\kappa_1)}$ implies  $x_2 \geq \kappa_1/\sqrt{n}$, or
\begin{align*}
 &\{ x_2 \geq \kappa_1/\sqrt{n},\    \Gamma^{(\kappa_1)} \leq x \leq \Gamma^{(\kappa_2)} \} = S_2, \quad \text{ and } \\
 &\{x_2 \geq \kappa_1/\sqrt{n},\ x \geq \Gamma^{(\kappa_1)},\  x \geq \Gamma^{(\kappa_2)}\} = \{ x \geq \Gamma^{(\kappa_1)},\  x \geq \Gamma^{(\kappa_2)}\}.
\end{align*}
From \eqref{eq:gammaabove} we know that $x \geq \Gamma^{(\kappa_2)}$ implies  $x > \Gamma^{(\kappa_1)}$, or 
\begin{align*}
\{ x \geq \Gamma^{(\kappa_1)},\  x \geq \Gamma^{(\kappa_2)}\} = S_3.
\end{align*}
Therefore $S_0\cup S_1 \cup S_2 \cup S_3 $  equals $\Omega$.
\begin{figure}
\begin{tikzpicture}
 \node (ref) at (0,0){};
 \draw [->] (12,1)->(2,1);
 \draw [->] (11,0)->(11,8);
\node at (11.8,3) { $\kappa_1/\sqrt{n}$};
\node at (11.8,4) { $\kappa_2/\sqrt{n}$};
\node at (1.5,6) { $\Gamma^{(\kappa_1)}$};
\node at (1.5,7) { $\Gamma^{(\kappa_2)}$};
\node at (11.8,8) { $x_2$};
\node at (2,0.5) { $x_1$};

\draw[dashed,red] (11,3) .. controls (9,4) and (5,5.8) .. (2,6); 
\draw[dashed,red] (11,4) .. controls (9,5) and (5,6.8) .. (2,7); 
\draw[dashed] (2,3)--(11,3);
\draw[dashed] (2,4)--(11,4);
 \end{tikzpicture}
 \caption{An aide to visualize $S_0,S_1,S_2,S_3$.} \label{fig:visualization}
\end{figure}
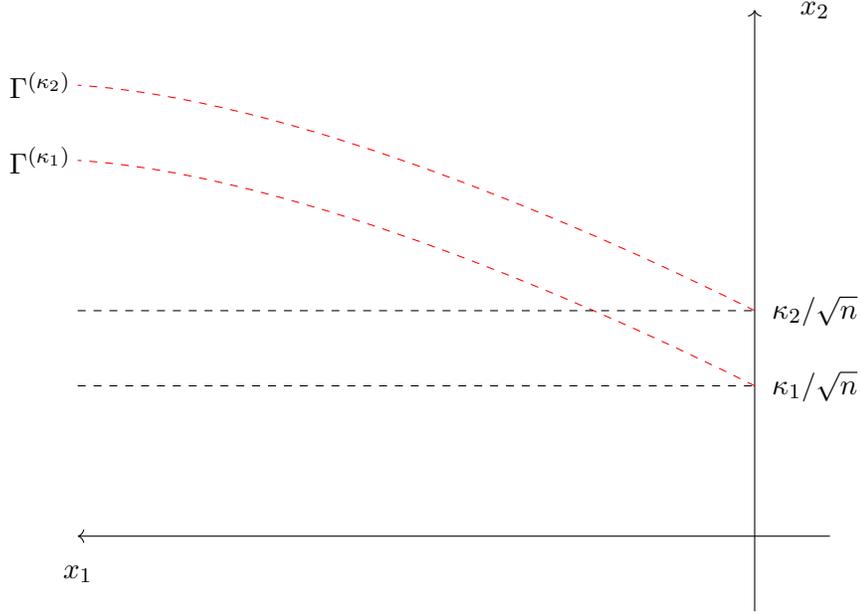
The following lemma is proved at the end of this section.
\begin{lemma} \label{lem:phi2}
The function 
\begin{align*}
f^{(2)}(x) = \begin{cases}
0, \hfill x \in S_0, \\
  \int_{0}^{\log(\sqrt{n}x_2/\kappa_1) } \phi( x_2 e^{-t}  ) dt,\ \hfill x_2 \leq \kappa_2/\sqrt{n},\   x \in S_1,\\
\log(\sqrt{n}x_2/\kappa_2) + \int_{0}^{\log(\kappa_2/\kappa_1) } \phi\big( \frac{\kappa_2}{\sqrt{n}}e^{-t} \big) dt,\ \qquad x_2 \geq \kappa_2/\sqrt{n},\   x \in S_1,\\
 \int_{0}^{\tau(x)  } \phi(x_2e^{-t})dt + \frac{\sqrt{n}}{\beta} \int_{\kappa_1/\sqrt{n}}^{x_2 e^{-\tau(x)}} \phi(t)dt,\ \hfill x_2 \leq \kappa_2/\sqrt{n},\ x \in S_2, \\
\log(\sqrt{n}x_2/\kappa_2) + \int_{\log(\sqrt{n}x_2/\kappa_2)}^{\tau(x)  } \phi \big( x_2  e^{-t} \big)dt  \\
 \hspace{3cm}+ \frac{\sqrt{n}}{\beta} \int_{\kappa_1/\sqrt{n}}^{x_2 e^{-\tau(x)}} \phi(t)dt, \hfill  x_2 \geq \kappa_2/\sqrt{n},\  x \in S_2, \\
\tau(x) + \frac{x_2 e^{-\tau(x)} - \kappa_2/\sqrt{n}}{\beta/\sqrt{n}} + \frac{\sqrt{n}}{\beta} \int_{\kappa_1/\sqrt{n}}^{\kappa_2/\sqrt{n}} \phi(t)dt, \hfill  x \in S_3,
\end{cases}
\end{align*}
is well-defined for $x \in \Omega$ and belongs to $C^{2}(\Omega)$. Its derivatives are
\begin{align}
f_{1}^{(2)}(x) = \begin{cases}
0, \quad &x \in S_0, \\ 
0,   &x \in S_1,\\
 \frac{\sqrt{n}}{\beta}  \phi(x_2e^{-\tau(x)})e^{-\tau(x)}, \quad & x \in S_2, \\ 
\frac{\sqrt{n}}{\beta} e^{-\tau(x)}, \quad &x \in S_3,
\end{cases} \label{eq:speciald1}
\end{align}
and 
\begin{align}
f_2^{(2)}(x) = \begin{cases}
0, \quad &x \in S_0, \\
\frac{1}{x_2} \phi(x_2) ,  &x \in S_1,\\
\frac{1}{x_2} \big( \phi(x_2) - \phi(x_2 e^{-\tau(x)})\big)  + \phi(x_2 e^{-\tau(x)})\frac{\sqrt{n}}{\beta}  e^{-\tau(x)} (\tau(x) + 1) ,& x \in S_2, \\ 
\frac{\sqrt{n}}{\beta}  e^{-\tau(x)} (\tau(x) + 1), \quad &x \in S_3.
\end{cases}\label{eq:speciald2}
\end{align}
\end{lemma}
In the remainder of this section, we 1) verify $L f^{(2)}(x) = -\phi (x_2)$, 2) verify $f_1^{(2)}(0,x_2) = f_2^{(2)}(0,x_2)$ 3) bound $f^{(2)}(x), f_{1}^{(2)}(x),$ and $f^{(2)}_{11}(x)$, and 4) prove Lemma~\ref{lem:phi2}.

We now verify that $L f^{(2)}(x) = -\phi (x_2)$. When $x \in S_0$ and $x \in S_1$, this fact is trivial. For $x \in S_2$,  
\begin{align*}
&(-x_1 + x_2 - \beta/\sqrt{n}) f_1^{(2)}(x) - x_2 f_2^{(2)}(x)\\
=&\ (-x_1 + x_2 - \beta/\sqrt{n}) \frac{\sqrt{n}}{\beta}  \phi(x_2e^{-\tau(x)})e^{-\tau(x)} \\
&-  x_2 \Big(\frac{1}{x_2} \big( \phi(x_2) - \phi(x_2 e^{-\tau(x)})\big)  + \phi(x_2 e^{-\tau(x)})\frac{\sqrt{n}}{\beta}  e^{-\tau(x)} (\tau(x) + 1)\Big) \\
=&\ -(x_1 +\beta/\sqrt{n}) \frac{\sqrt{n}}{\beta}  \phi(x_2e^{-\tau(x)})e^{-\tau(x)} \\
&-    \big( \phi(x_2) - \phi(x_2 e^{-\tau(x)})\big)  -x_2 \phi(x_2 e^{-\tau(x)})\frac{\sqrt{n}}{\beta}  e^{-\tau(x)} \tau(x)  \\
=&\  \frac{\sqrt{n}}{\beta}  \phi(x_2e^{-\tau(x)})e^{-\tau(x)}\Big( -(x_1 +\beta/\sqrt{n}) - x_2\tau(x) \Big) - \big( \phi(x_2) - \phi(x_2 e^{-\tau(x)})\big)  \\
=&\    \phi(x_2e^{-\tau(x)})  - \big( \phi(x_2) - \phi(x_2 e^{-\tau(x)})\big)  \\
 =&\ -\phi(x_2), \quad x \in \Omega,
\end{align*}
where in the second last inequality we used the fact that 
\begin{align*}
-(x_1+\beta/\sqrt{n})e^{-\tau(x)} - x_2 \tau(x) e^{-\tau(x)} = -\beta/\sqrt{n}
\end{align*}
from Lemma~\ref{lem:tau}. For $x \in S_3$, 
\begin{align*}
&(-x_1 + x_2 - \beta/\sqrt{n}) f_1^{(2)}(x) - x_2 f_2^{(2)}(x)\\
=&\ (-x_1 + x_2 - \beta/\sqrt{n})\frac{\sqrt{n}}{\beta} e^{-\tau(x)} - x_2 \frac{\sqrt{n}}{\beta}  e^{-\tau(x)} (\tau(x) + 1)\\
=&\ \frac{\sqrt{n}}{\beta} e^{-\tau(x)}\big(-x_1  - \beta/\sqrt{n}  - x_2  \tau(x) \big) \\
=&\ -1 = -\phi(x_2),
\end{align*}
where in the last equality we used the fact that $x \in S_3$ implies $x_2 \geq \kappa_2/\sqrt{n}$. Therefore, $L f^{(2)}(x) = -\phi (x_2)$ for all $x \in \Omega$. Let us now verify that 
\begin{align}
f_1^{(2)}(0,x_2) = f_2^{(2)}(0,x_2). \label{eq:obliqueverify}
\end{align}
From Lemma~\ref{lem:tau} we know that $\tau(0,x_2) = 0$, which  suggests that \eqref{eq:obliqueverify} holds for $x \in S_2 \cup	S_3$.  Furthermore, the only point in $S_1$ with $x_1 = 0$ is the point $(0,\kappa_1/\sqrt{n})$, which means that \eqref{eq:obliqueverify} holds for $x \in S_1$ as well. Therefore,  $f^{(2)}(x)$ solves \eqref{eq:pde2}.

We now bound $f_{1}^{(2)}(x)$ and $f^{(2)}_{11}(x)$ to prove \eqref{eq:expf2}. The bound on $f_{1}^{(2)}(x)$ is straightforward and the details are omitted. Differentiating $f_{1}^{(2)}(x)$, we see that
\begin{align*}
f_{11}^{(2)}(x) = \begin{cases}
0, \quad &x \in S_0, \\ 
0,   &x \in S_1,\\
 -\tau_1(x) \frac{\sqrt{n}}{\beta}  \phi(x_2e^{-\tau(x)})e^{-\tau(x)}  - \tau_1(x) x_2 e^{-2\tau(x)} \frac{\sqrt{n}}{\beta}  \phi'(x_2e^{-\tau(x)})  , \quad & x \in S_2, \\ 
-\tau_1(x) \frac{\sqrt{n}}{\beta}  e^{-\tau(x)}  , \quad &x \in S_3.
\end{cases}
\end{align*}
Recall from \eqref{eq:tauder} that
\begin{align*}
-\tau_1(x) = \frac{e^{-\tau(x)}}{x_2 e^{-\tau(x)} - \beta/\sqrt{n}}. 
\end{align*}
From \eqref{eq:gammaprop}, we know that $x_2 e^{-\tau(x)} \geq \kappa_2/\sqrt{n}$ for $x  \geq \Gamma^{(\kappa_2)}$, which means that
\begin{align*}
\abs{f_{11}^{(2)}(x)} \leq \frac{\sqrt{n}}{\beta} \frac{1}{\kappa_2/\sqrt{n}-\beta/\sqrt{n}} = \frac{n}{\beta(\kappa_2-\beta)}, \quad x \in S_3.
\end{align*}
Similarly,
\begin{align*}
\abs{f_{11}^{(2)}(x)} \leq&\ \frac{n}{\beta(\kappa_1-\beta)}+ \frac{e^{-\tau(x)}}{x_2 e^{-\tau(x)} - \beta/\sqrt{n}} x_2 e^{-2\tau(x)} \frac{\sqrt{n}}{\beta}  \abs{\phi'(x_2e^{-\tau(x)})} \\
\leq&\  \frac{n}{\beta(\kappa_1-\beta)} +  \frac{x_2 e^{-\tau(x)}}{x_2 e^{-\tau(x)} - \beta/\sqrt{n}} \frac{\sqrt{n}}{\beta}\frac{4\sqrt{n}}{\kappa_2-\kappa_1}  \\
\leq&\  \frac{n}{\beta(\kappa_1-\beta)} + \frac{\kappa_1}{\kappa_1	-\beta} \frac{4n}{\beta(\kappa_2-\kappa_1)}, \quad x \in S_2,
\end{align*}
where in the second inequality we used \eqref{eq:phider} and in the last inequality we used the fact that  $x_2 e^{-\tau(x)} \geq \kappa_1/\sqrt{n}$ for $x  \geq \Gamma^{(\kappa_1)}$. This proves \eqref{eq:expf2} and we now prove the bound on $f^{(2)}(x)$ in  \eqref{eq:expfsum2}.  From the form of $f^{(2)}(x)$, we see that 
\begin{align*}
&f^{(2)}(x) \leq \log(\sqrt{n}x_2/\kappa_1) \leq \log(\kappa_2/\kappa_1), \quad &x_2 \leq \kappa_2/\sqrt{n},\ x \in S_1,\\
&f^{(2)}(x) \leq \tau(x)  + \frac{\sqrt{n}}{\beta} (x_2 e^{-\tau(x)} - \kappa_1/\sqrt{n}) \leq \tau(x) + \frac{\kappa_2-\kappa_1}{\beta} ,\ \hfill &x_2 \leq \kappa_2/\sqrt{n},\ x \in S_2.
\end{align*}
We do not need to bound $f^{(2)}(x)$ for $x \in S_3$, because from \eqref{eq:gammaprop} we know that $x \in S_3$ implies $x_2 \geq \kappa_2/\sqrt{n}$. To bound $\tau(x)$ in the second line above, note from \eqref{eq:gammaprop} that 
\begin{align*}
x_2 e^{-\tau(x)} \geq  \kappa_1/\sqrt{n}, \quad x_2 \leq \kappa_2/\sqrt{n},\ x \in S_2,
\end{align*}
which means that
\begin{align*}
\tau(x) \leq \log(\kappa_2/\kappa_1), \quad x_2 \leq \kappa_2/\sqrt{n},\ x \in S_2.
\end{align*}
This proves \eqref{eq:expfsum2}.

\begin{proof}[Proof of Lemma~\ref{lem:phi2} ]
Recall for convenience that 
\begin{align*}
 f^{(2)}(x) = \begin{cases}
0, \hfill x \in S_0, \\
  \int_{0}^{\log(\sqrt{n}x_2/\kappa_1) } \phi( x_2 e^{-t}  ) dt,\ \hfill x_2 \leq \kappa_2/\sqrt{n}, \  x \in S_1,\\
\log(\sqrt{n}x_2/\kappa_2) + \int_{0}^{\log(\kappa_2/\kappa_1) } \phi\big( \frac{\kappa_2}{\sqrt{n}}e^{-t} \big) dt,\ \qquad  x_2 \geq \kappa_2/\sqrt{n},\  x \in S_1,\\
 \int_{0}^{\tau(x)  } \phi(x_2e^{-t})dt + \frac{\sqrt{n}}{\beta} \int_{\kappa_1/\sqrt{n}}^{x_2 e^{-\tau(x)}} \phi(t)dt,\ \hfill x_2 \leq \kappa_2/\sqrt{n},\ x \in S_2, \\
\log(\sqrt{n}x_2/\kappa_2) + \int_{\log(\sqrt{n}x_2/\kappa_2)}^{\tau(x)  } \phi \big( x_2  e^{-t} \big)dt  \\
\hspace{3cm}+ \frac{\sqrt{n}}{\beta} \int_{\kappa_1/\sqrt{n}}^{x_2 e^{-\tau(x)}} \phi(t)dt, \hfill x_2 \geq \kappa_2/\sqrt{n},\  x \in S_2, \\
\tau(x) + \frac{x_2 e^{-\tau(x)} - \kappa_2/\sqrt{n}}{\beta/\sqrt{n}} + \frac{\sqrt{n}}{\beta} \int_{\kappa_1/\sqrt{n}}^{\kappa_2/\sqrt{n}} \phi(t)dt, \hfill x \in S_3,
\end{cases}
\end{align*}
and 
\begin{align*}
&S_0 =  \{x \in \Omega \ |\ x_2 \leq \kappa_1/\sqrt{n} \},\\
&S_1 =  \{x \in \Omega \ |\ x_2 \geq \kappa_1/\sqrt{n},\ x \leq \Gamma^{(\kappa_1)} \},\\
&S_2 = \{x \in \Omega \ |\   \Gamma^{(\kappa_1)} \leq x \leq \Gamma^{(\kappa_2)} \},\\
&S_3 =  \{x \in \Omega \ |\ x \geq \Gamma^{(\kappa_2)}\}.
\end{align*}
Let us first verify the continuity of $f^{(2)}(x)$ on each of $S_i \cap S_j$ for $i,j \in \{0,1,2,3\}$; the only non-empty intersections are $S_0 \cap S_1$, $S_1 \cap S_2$, and $S_2 \cap S_3$. Continuity on $S_0 \cap S_1$ is straightforward because
\begin{align*}
S_0 \cap S_1 = \{x_2 = \kappa_1/\sqrt{n}\}.
\end{align*}
 Now $S_1 \cap S_2 = \{ x \in \Gamma^{(\kappa_1)}\}$, and recall from \eqref{eq:gammaprop} that
\begin{align}
x_2 e^{-\tau(x)} = \kappa/\sqrt{n} \quad \text{ or } \quad  \tau(x) = \log(\sqrt{n}x_2/\kappa), \quad x \in \Gamma^{(\kappa)}. \label{eq:auxprop}
\end{align}
Therefore, we can compare the definitions of $f^{(2)}(x)$ on 
\begin{align*}
x_2 \leq \kappa_2/\sqrt{n},\ x \in S_1  \quad \text{ vs. } \quad x_2 \leq \kappa_2/\sqrt{n},\ x \in S_2
\end{align*}
 and 
\begin{align*}
x_2 \geq \kappa_2/\sqrt{n},\ x \in S_1  \quad \text{ vs. } \quad x_2 \geq \kappa_2/\sqrt{n},\ x \in S_2
\end{align*}
to see that they coincide. Lastly, $S_2 \cap S_3  = \{x \in \Gamma^{(\kappa_2)}\}$, and recall that $x \in \Gamma^{(\kappa_2)}$ implies $x_2 \geq \kappa_2/\sqrt{n}$. Therefore, we need only to compare the definitions of $f^{(2)}(x)$ on 
\begin{align*}
x_2 \geq \kappa_2/\sqrt{n},\ x \in S_2  \quad \text{ vs. } \quad  x \in S_3
\end{align*}
and use \eqref{eq:auxprop} to conclude that $f^{(2)}(x)$ is continuous. 

Differentiating $f^{(2)}(x)$ and using the Leibniz integration rule, we get 
\begin{align*}
f_{1}^{(2)}(x) = \begin{cases}
0, \quad &x \in S_0, \\ 
0,   &x \in S_1,\\
 \tau_1(x) \phi(x_2e^{-\tau(x)})\big(1  - \frac{\sqrt{n}}{\beta} x_2 e^{-\tau(x)}\big), \quad & x \in S_2, \\ 
\tau_1(x) \big(1  - \frac{\sqrt{n}}{\beta} x_2 e^{-\tau(x)}\big), \quad &x \in S_3,
\end{cases}
\end{align*}
and
\begin{align*}
f_2^{(2)}(x) = \begin{cases}
0, \quad &x \in S_0, \\
\frac{1}{x_2} \phi(x_2) ,  &x \in S_1,\\
\frac{1}{x_2} \big( \phi(x_2) - \phi(x_2 e^{-\tau(x)})\big)  +  \tau_2(x) \phi(x_2 e^{-\tau(x)})  \\
 \hspace{1cm} + \frac{\sqrt{n}}{\beta}\phi(x_2 e^{-\tau(x)}) \big( e^{-\tau(x)} - \tau_2(x) x_2 e^{-\tau(x)}\big) ,& x \in S_2, \\ 
\tau_2(x) + \frac{\sqrt{n}}{\beta} \big(e^{-\tau(x)}- \tau_2(x) x_2 e^{-\tau(x)} \big), \quad &x \in S_3.
\end{cases}
\end{align*} 
From \eqref{eq:tauder}, we know that 
\begin{align*}
-\tau_1(x) = \frac{e^{-\tau(x)}}{x_2 e^{-\tau(x)} - \beta/\sqrt{n}}, \quad \text{ and } \quad \tau_2(x) = \tau_1(x) \tau(x),
\end{align*}
which proves the forms of $f^{(2)}_1(x)$ and $f^{(2)}_2(x)$ as stated in the Lemma. Let us now prove that the first order derivatives are continuous. We begin with $f_{1}^{(2)}(x)$, for which we have to verify continuity on $S_1 \cap S_2 = \{ x \in \Gamma^{(\kappa_1)}\}$ and $S_2 \cap S_3  = \{x \in \Gamma^{(\kappa_2)}\}$. On the former set, $\phi(x_2e^{-\tau(x)}) = \phi(\kappa_1/\sqrt{n}) = 0$, and on the latter set, $\phi(x_2e^{-\tau(x)}) = \phi(\kappa_2/\sqrt{n}) = 1$, which proves continuity of $f_{1}^{(2)}(x)$. We now prove continuity of $f^{(2)}_2(x)$. On $S_0 \cap S_1 = \{x_2 = \kappa_1/\sqrt{n}\}$, $f^{(2)}_2(x) = \frac{1}{x_2}\phi(\kappa_1/\sqrt{n}) =0$. On $S_1 \cap S_2 = \{ x \in \Gamma^{(\kappa_1)}\}$, continuity follows from the fact that $\phi(x_2e^{-\tau(x)}) = \phi(\kappa_1/\sqrt{n}) = 0$ and on $S_2 \cap S_3  = \{x \in \Gamma^{(\kappa_2)}\}$ continuity follows both from the fact that both $\phi(x_2e^{-\tau(x)}) = \phi(\kappa_2/\sqrt{n}) = 1$ and $1 \geq \phi(x_2)\geq \phi(x_2e^{-\tau(x)})=1$. Continuity of the second order partial derivatives is straightforward to check using similar arguments, and the proof is omitted.



\end{proof}

%
%
%


\bibliography{dai20171117} 

\end{document}